\documentclass[twoside,reqno]{amsart}
\usepackage{amsfonts,amsmath,amscd,amsthm,amssymb}
\usepackage{mathtools}
\usepackage[mathscr]{euscript} 
\usepackage{graphics}
\usepackage{url}
\usepackage{wrapfig}
\usepackage{lscape}
\usepackage{rotating}
\usepackage{epsfig}
\usepackage{cite}
\usepackage{color}
\usepackage{booktabs}
\usepackage{multirow}
\usepackage{lipsum,multicol}

\usepackage{tikz}
\usetikzlibrary{patterns}

%     If you need symbols beyond the basic set, uncomment this command.
\usepackage{amssymb}
\usepackage{bbm}
\usepackage{multirow}
%     If your article includes graphics, uncomment this command.
%\usepackage{graphicx}

%     If the article includes commutative diagrams, ...
%\usepackage[cmtip,all]{xy}

%     Update the information and uncomment if AMS is not the copyright
%     holder.
%\copyrightinfo{2009}{American Mathematical Society}

\newtheorem{theorem}{Theorem}[section]
\newtheorem{lemma}[theorem]{Lemma}

\newtheorem{corollary}[theorem]{Corollary}

\theoremstyle{definition}

\theoremstyle{remark}
\newtheorem{remark}[theorem]{Remark}

\numberwithin{equation}{section}
\numberwithin{table}{section}

\def\ho{h^{k+1}} % h^{k+1}

\def\jj{j = 1, \ldots, N}

\def\ih{\mathcal{I}_h}
\def\jump#1{[\![{#1}]\!]} % the jump of a function
 % the square of the jump at interfaces
 % the square of the jump at interfaces
\def\mean#1{\{\!\!\{{#1}\}\!\!\}} % the mean of a function

\def\normI#1{\|{#1}\|_{L^2(I)}}  % the usual L^2 norm defined on the domain \I
\def\oh{\Omega_h}    % the partition \Omega_h
\def\poh{\partial \Omega_h}  % all the boundaries of the partition
\def\pk{\partial K}                  % the boundaries of an element K
\def\intc#1{\int_{I_j}{#1}\mathrm{dx} } % the integral of a function at each cell
\def\intid#1{\int_{I}{#1}\mathrm{dx} } % the integral of a function at whole interval or domain
 % the integral of a function at an arbitrary element
\def\intk#1{\int_K{#1} \mathrm{d{\boldsymbol x}}} % the integral of a function at an arbitrary element
\def\intdk#1{\int_{\partial K}{#1} \mathrm{ds}} % the integral in the y variable

\begin{document}

\title[Optimal energy conserving  DG]{Optimal energy-conserving discontinuous Galerkin methods for 
linear symmetric hyperbolic systems}

\author{Guosheng Fu}
\address{Division of
Applied Mathematics, Brown University, Providence, RI
 02912}
\curraddr{}
\email{guosheng\_fu@brown.edu}

%    author two information
\author{Chi-Wang Shu}
\address{Division of
Applied Mathematics, Brown University, Providence, RI
 02912}
\curraddr{}
\email{shu@dam.brown.edu}
\thanks{The research of the second author was supported
  by ARO grant W911NF-15-1-0226 and NSF grant DMS-1719410.}

%    \subjclass is required.
\subjclass[2010]{Primary 65M60, 65M12, 65M15}

%\date{August 22, 2013 and, in revised form, October 30, 2014 and November XX, 2014.}

\dedicatory{}

\keywords{discontinuous Galerkin method, energy conserving, hyperbolic system}

%    Abstract is required.
\begin{abstract}
We propose energy-conserving discontinuous Galerkin (DG) methods
for symmetric linear hyperbolic systems on general unstructured meshes.
Optimal a priori error estimates of order $k+1$ are obtained for the semi-discrete scheme 
in one dimension, and in multi-dimensions on Cartesian meshes when tensor-product polynomials of
degree $k$ are used.
A high-order energy-conserving  Lax-Wendroff time discretization is also presented.

Extensive numerical results in one dimension, and two dimensions on both rectangular and 
triangular meshes are presented to support the theoretical
findings and to assess the new methods.
One particular method (with the doubling of unknowns) 
is found to be optimally convergent on triangular meshes for all the examples considered in this paper.
The method is also compared with the classical (dissipative) upwinding  
DG method and (conservative) DG method with a central flux.
It is numerically observed for the new method to have a superior performance for long-time 
simulations.
% Optimal a priori error estimates of order $k+1$ can be proven in multi-dimension on 
% tensor-product meshes.
% 
% including the equations for
% advection, acoustics, aerodynamics, elastodynamics, and electromagnetism. 
% The method is defined on Cartesian meshes that use piecewise tensor product polynomials 
% as basis functions. 
% Numerical experiments are shown to demonstrate the theoretical results.
\end{abstract}

\maketitle
\section{Introduction}
Wave propagation problems arise in science, engineering and 
industry, and they are significant to geoscience, 
petroleum engineering, telecommunication, 
and the defense industry (see \cite{Durran99,Kampanis08} and the references therein).
Efficient and accurate numerical methods to solve wave propagation problems are of 
fundamental importance to these applications. Experience reveals 
that energy conserving numerical methods, which conserve the discrete approximation of 
energy, are favorable because they are able to maintain the phase and shape of the waves 
accurately, especially for long time simulation. 
% In this paper, we present an energy-conserving discontinuous Galerkin (DG)
% method for linear symmetric hyperbolic systems, including the equations for advection, acoustics, 
% electromagnetism, elastodynamics, and aeroacoustics.

A vast amount of literature can be found on the numerical approximation of 
wave problems modeled by linear hyperbolic systems. 
All types of numerical methods, including finite difference, finite element, finite
volume and spectral methods have their proponents.  Here, we will confine our attention
in finite element methods, in particular, discontinuous Galerkin (DG) methods.
The DG methods, c.f. \cite{Cockburn00}, belong to a class of finite element methods 
using discontinuous piecewise polynomial
spaces for both the numerical solution and the test functions. 
They allow arbitrarily unstructured meshes, and have compact stencils. Moreover, they
easily accommodate arbitrary h-p adaptivity. 

Various DG methods can be applied to solve linear hyperbolic systems. 
We mention the classical Runge-Kutta DG method of Cockburn and Shu \cite{CockburnShu01}, 
the nodal DG method of Hesthaven and Warburton \cite{HesthavenWarburton02},
the space-time DG method of Falk and Richter \cite{FalkRichter00} and 
Monk and Richter \cite{MonkRichter05}. 
All these DG methods use approximate/exact Riemann solvers to define the numerical flux, and 
are dissipative by design.

A suboptimal energy-conserving DG method using central fluxes,  
has been presented by Fezoui et. al. in \cite{Fezoui05}
for the Maxwell’s equations. 
Chung and Engquist \cite{ChungEngquist09} have proposed an optimal,
energy conserving DG method for the acoustic wave equation on staggered grids.
More recently, Xing et. al. \cite{XingChouShu14} proposed 
an optimal, energy conserving DG method using alternating fluxes
for the acoustic wave equation on Cartesian grids. These DG methods do not rely on 
approximate/exact Riemann solvers to define the numerical flux.

Our work can be considered as a continuation of \cite{XingChouShu14}
on the search for optimal, energy-conserving 
DG methods for general linear symmetric hyperbolic systems.
We propose an energy-conserving DG method for linear symmetric hyperbolic systems on general 
unstructured meshes.
The method on Cartesian meshes 
is identical to the DG method using alternating fluxes \cite{XingChouShu14} 
for the acoustic wave equation considered therein. They may be different on general triangular meshes.
We prove optimal convergence of the proposed semi-discrete DG method in one-space dimension.
In particular, we present, for the first time, an optimal, energy-conserving DG method for the 
scalar advection equation on general non-uniform meshes in one dimension.
Similar to \cite{XingChouShu14}, the semi-discrete DG method can be also proven to be 
optimally convergent in multi-dimensions on Cartesian meshes, essentially using the superconvergence result 
of Lesaint and Raviart \cite{LasaintRaviart74} for the tensor-product 
Gauss-Radau projection.
% The error estimates of the proposed DG method on general triangular meshes is
On the other hand, on general triangular meshes, 
we are only able to 
prove a suboptimal convergence for the proposed method 
using a standard $L^2$-projection type analysis.
However, in all our numerical results on unstructured triangular meshes presented in this paper,
including the scalar advection equation, the acoustic equations, and the equations for elastodynamics,  
the method is observed to be optimally convergent.
% ample numerical results are presented on unstructured triangular meshes to 
% show the optimal convergence of the proposed method.
% on triangular meshes, while 
% in certain other cases (acoustics with zero background velocity and elastodynamics), 
% we only observe suboptimal convergence. Such suboptimal convergence on triangular meshes 
% was also recently documented in 
% \cite{LiShiShu17} for the Maxwell's equations. 
A theoretical study of the convergence property of the
proposed method on triangular meshes consists of our ongoing work.

% We also present a high-order energy-conserving Lax-Wendroff \cite{LaxWendroff60} time discretization
% for the resulting semi-discrete scheme.

The rest of the paper is organized as follows. 
In Section \ref{sec:1d}, we first present and analyze 
the semi-discrete energy-conserving DG method for linear symmetric hyperbolic systems 
in one dimension. We also present the high-order energy-conserving 
Lax-Wendroff time discretization. 
In Section \ref{sec:2d}, the  method is extended to multi-dimensions.
Numerical results are reported in Section \ref{sec:num}. Finally, we conclude in 
Section \ref{sec:conclude}.

\section{Energy-conserving DG methods for the one-dimensional case}\label{sec:1d}
In this section, we present and analyze the  energy-conserving DG methods
for linear symmetric hyperbolic systems in one dimension. The extension to 
the multidimensional case will be consider in the next section.

\subsection{Notation and definitions in the one-dimensional case}
In this subsection, we shall first introduce some notation and definitions in the one-dimensional case,
which will be used throughout this section.
% which will be used in our analysis for one-dimensional linear conservation laws.
\subsubsection{The meshes}
Let us denote by $\ih$ a tessellation of the computational interval $I = [0, 1]$,
consisting of cells $I_j = (x_{j-\frac12},x_{j+\frac12})$ with $1
\le j \le N$, where
$$
0 = x_\frac12 < x_\frac32 < \cdots < x_{N+\frac12} = 1.
$$
The following standard notation of DG methods will be used.
Denote $x_j =
(x_{j-\frac12} + x_{j+\frac12})/2$, $h_j = x_{j+\frac12} -
x_{j-\frac12}$, $h = \max_jh_j$, and $\rho = \min_jh_j$.
The mesh is assumed to be regular in the sense that $h / \rho$ 
is always bounded during mesh refinements, namely, there
exists a positive constant $\gamma$ such that $\gamma h \le \rho \le h$.
We denote by $p_{j+\frac12}^-$ and $p_{j+\frac12}^+$ the values
of $p$ at the discontinuity point $x_{j+\frac12}$, from the left
cell, $I_j$, and from the right cell, $I_{j+1}$, respectively.
In what follows, we employ $\jump p = p^+ - p^-$ and $\mean p = \frac12(p^+ +
p^-)$ to represent the jump and the mean value of $p$ at each element
boundary point. The following discontinuous piecewise polynomials space is chosen
as the finite element space:
\begin{align}
\label{space-1d}
V_h \equiv V_h^{k} = \left\{v \in L^2(I): v|_{I_j} \in P^{k}(I_j),
~~ j = 1, \ldots, N \right\}, 
\end{align}
where $P^{k}(I_j)$ denotes the set of polynomials of degree up to $k
\ge 0$ defined on the cell $I_j$.

\subsubsection{Function spaces and norms}
Denote $H^1(I)$ as the space of $L^2$ functions on $I$ whose derivative is also an $L^2$ function.
Denote $\|\cdot\|_{I_j}$ the standard $L^2$-norm on the cell $I_j$, and 
$\|\cdot\|_I$ the $L^2$-norm on the whole interval.
% For any integer $ l \ge 0$, denote by $ \| \cdot \|_{H^{l}(I_j)}$ and $ \| \cdot \|_{W^{l,\infty}(I_j)}$ the
% standard Sobolev norms on the cell $I_j$.
% Then, the norms of the broken Sobolev spaces
% $W^{l,p}(\ih) := \{ u \in L^2 (I) : u|_{I_j} \in W^{l,p}(I_j), ~\forall \jj \}$
% with $p = 2, \infty$ are given by
% $$
%  \| u \|_{W^{l,2}(\ih)}  ={\norm u}_{H^l(\ih)} = \left( \summ {\norm u}_{H^l(I_j)}^2 \right)^{\frac12}, \quad
% \| u \|_{W^{l,\infty}(\ih)} = \max_{1\le j \le N} \| u \|_{W^{l,\infty}(I_j)}.
% $$
% In the case $l = 0$, we denote $\normI u = \norm u_{H^0(\ih)}$.

\newcommand{\bld}[1]{\boldsymbol{#1}}
\subsection{Energy-conserving DG methods for linear symmetric hyperbolic systems}
\label{sub:sys}
We first start with a general form of energy-conserving DG method for the following linear symmetric 
hyperbolic system:
%  \begin{subequations}
\begin{align}
 \label{1d-symmetric-sys}
 \bld B_0\,{\bld u}_t + \bld B_1 \bld u_x & =0, &&\hspace{-2.8cm}  (x,t)\in I\times (0,T],
\end{align}
with initial condition $\bld u(x,0)=\bld u_0(x)$, and periodic boundary condition.
Here the unknown is $\bld u:I\times (0,T]\rightarrow\mathbb{R}^m$, and $\bld u=(u^1,\cdots, u^m)$.
The matrix $\bld B_0: I \rightarrow\mathbb{R}^{m\times m}$ is a diagonal matrix with positive, piecewise constant diagonal entries, and 
$\bld B_1\in\mathbb{R}^{m\times m}$ is a {\it symmetric constant coefficient} matrix.
% \end{subequations}

The semi-discrete DG method for \eqref{1d-symmetric-sys} reads as follows. 
Find, for any time $t \in (0, T]$,
the unique function $\bld u_h=\bld u_h(t)\in [V_h^k]^m$ such that
\begin{subequations}
 \label{scheme:sys1d}
\begin{align} 
% \small
\intc{\bld B_0(\bld u_h)_t\cdot \bld v_h} - \intc{\bld B_1\,\bld u_h\cdot (\bld v_h)_x} +
\widehat{\bld B_1\bld u}_h\cdot \bld v_h^-|_{j+\frac12} -
\widehat{\bld B_1\bld u}_h \cdot\bld v_h^+|_{j-\frac12} = &\;0,
\end{align}
holds for all $\bld v_h\in [V_h^k]^m$ and all $\jj.$
The {\it consistent} 
numerical fluxes $\widehat{\bld B_1\bld u}_h$ is single-valued on the cell boundaries $x_{j-1/2}$, 
and it is given by the following form 
\begin{align}\label{flux:sys1d}
\widehat{\bld B_1\bld u}_h|_{j-\frac12} = 
\bld B_1\mean{\bld u_h}
+ \bld R_{j-\frac12}\jump{\bld u_h},
\end{align}
where $\bld R_{j-\frac12}\in \mathbb{R}^{m\times m}$ is a, yet to be determined, {\it stabilization} matrix 
at $x_{j-\frac12}$. 
\end{subequations}
% We assume $\bld R_{j-1/2}$ do not depend on the solution $\bld u_h$.
\begin{remark}[The stabilization matrix $\bld R_{j-\frac12}$]
Since the matrices $\bld R_{j-\frac12}$ do not depend on the numerical solution $\bld u_h$, 
the semi-discrete DG scheme \eqref{scheme:sys1d} with the numerical flux \eqref{flux:sys1d} is a
linear scheme. 
In the most general form of a {\it local} numerical flux, the stabilization 
 $\bld R_{j-\frac12}$ may depend on $\bld u_h^\pm$ at the interface $x_{j-\frac12}$, which will leads to 
 a nonlinear scheme for the linear equation \eqref{1d-symmetric-sys}.
 We always consider the linear numerical flux \eqref{flux:sys1d} in this work, as we do not see any advantage of a 
 nonlinear scheme for the equation \eqref{1d-symmetric-sys}.
%  However, we remark that {} energy conserving 
\end{remark}

Summing the equations \eqref{scheme:sys1d} for all $j$, and using the periodic boundary condition, we have
\begin{align} 
% \small
 \label{scheme:sys1d-w}
\sum_{j=1}^N\left(\intc{\bld B_0(\bld u_h)_t\cdot \bld v_h} - \intc{\bld B_1\,\bld u_h\cdot (\bld v_h)_x} -
\widehat{\bld B_1\bld u}_h\cdot \jump{\bld v_h}|_{j-\frac12}\right) = &\;0,
\end{align}

As is well-known, the linear symmetric hyperbolic system \eqref{1d-symmetric-sys} admits an important conserved quantity
-- the energy, 
\[
 E(t)=\int_I (\bld B_0 \bld u(t))\cdot\bld u(t)\mathrm{dx},
%  =E(0),\quad \forall t.
\]
that is, $E(t)=E(0)$ for all $t>0$.
% We are particularly interested in DG methods that conserve the discrete analogs of energy.
Experiences show that schemes conserving  the discrete analogs of energy
often produce approximations that behave better for long time simulation.
We are particularly interested in deriving optimally-convergent, and energy-conserving DG methods.
We call the semi-discrete DG method an energy-conserving DG method if the discrete energy
\begin{align}
\label{energy:sys1d}
 E_h(t):=\int_I (\bld B_0 \bld u_h\cdot\bld u_h)\mathrm{dx}
\end{align}
is conserved for all time.

The following theorem provide a sufficient and necessary condition for energy conservation of 
the DG methods \eqref{scheme:sys1d}.
\begin{theorem}
\label{thm:sys1d}
 The (continuous-in-time) energy $E_h(t)$ \eqref{energy:sys1d} is conserved by the semi-discrete DG scheme
 \eqref{scheme:sys1d}
 for any initial condition $\bld u_0(x)$, for all time $t>0$ if and only if the stabilization matrix 
 $\bld{R}_{j-1/2}$ is anti-symmetric for all $j$.
\end{theorem}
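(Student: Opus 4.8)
The plan is the standard energy argument: test the summed weak form \eqref{scheme:sys1d-w} with $\bld v_h=\bld u_h$ and extract an exact evolution equation for $E_h(t)$. Because $\bld B_0$ is symmetric and independent of $t$, one has $\intc{\bld B_0(\bld u_h)_t\cdot\bld u_h}=\tfrac12\tfrac{d}{dt}\intc{\bld B_0\bld u_h\cdot\bld u_h}$, so the $\bld B_0$-terms sum to $\tfrac12 E_h'(t)$. For the volume term I would use that $\bld B_1$ is symmetric and constant to write $\bld B_1\bld u_h\cdot(\bld u_h)_x=\tfrac12\,\partial_x(\bld B_1\bld u_h\cdot\bld u_h)$ on each cell, integrate, and reassemble the nodal traces; invoking the symmetry of $\bld B_1$ once more through $\jump{\bld B_1\bld u_h\cdot\bld u_h}=2\,\bld B_1\mean{\bld u_h}\cdot\jump{\bld u_h}$, this collapses to a central-flux-like boundary contribution $\sum_{j=1}^N\bld B_1\mean{\bld u_h}\cdot\jump{\bld u_h}_{j-\frac12}$ that is cancelled exactly by the $\bld B_1\mean{\bld u_h}$ part of the numerical flux \eqref{flux:sys1d} in the interface term. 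Only the stabilization part remains, giving
\[
\tfrac12\,E_h'(t)=\sum_{j=1}^N\bigl(\bld R_{j-\frac12}\jump{\bld u_h}_{j-\frac12}\bigr)\cdot\jump{\bld u_h}_{j-\frac12}.
\]

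Both directions are then immediate. If every $\bld R_{j-\frac12}$ is anti-symmetric, then $(\bld R_{j-\frac12}\bld w)\cdot\bld w=\tfrac12\,\bld w^\top(\bld R_{j-\frac12}+\bld R_{j-\frac12}^\top)\bld w=0$ for all $\bld w\in\mathbb R^m$, so the right-hand side vanishes and $E_h(t)\equiv E_h(0)$. Conversely, if $E_h$ is conserved for every $\bld u_0$ and all $t$, the right-hand side above vanishes identically in $t$; evaluating at $t=0$, where $\bld u_h(0)$ ranges over all of $[V_h^k]^m$ as $\bld u_0$ varies, gives $\sum_{j=1}^N\bigl(\bld R_{j-\frac12}\jump{\bld w}_{j-\frac12}\bigr)\cdot\jump{\bld w}_{j-\frac12}=0$ for all $\bld w\in[V_h^k]^m$. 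Polarizing this quadratic identity yields $\sum_{j=1}^N\jump{\bld v}_{j-\frac12}^\top(\bld R_{j-\frac12}+\bld R_{j-\frac12}^\top)\jump{\bld w}_{j-\frac12}=0$ for all $\bld v,\bld w\in[V_h^k]^m$; then, for a fixed node $x_{j_0-\frac12}$, I would choose $\bld v$ and $\bld w$ supported on a single adjacent cell, prescribed arbitrarily at $x_{j_0-\frac12}$ and vanishing at the opposite endpoint, so that only the jump at $x_{j_0-\frac12}$ survives. This isolates $\bld a^\top(\bld R_{j_0-\frac12}+\bld R_{j_0-\frac12}^\top)\bld b=0$ for all $\bld a,\bld b\in\mathbb R^m$, i.e. $\bld R_{j_0-\frac12}$ is anti-symmetric.

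I expect the crux to be the interface bookkeeping in the energy identity: one must track the left and right traces at each of the $N$ periodic nodes carefully and invoke the symmetry of $\bld B_1$ at precisely the right places, so that the integration-by-parts boundary terms annihilate the central part of the numerical flux and leave only the quadratic form $\sum_j(\bld R_{j-\frac12}\jump{\bld u_h})\cdot\jump{\bld u_h}$, whose vanishing for all data is equivalent to anti-symmetry of every $\bld R_{j-\frac12}$. The rest is routine; the only mild point in the converse is that the discrete space must be rich enough to prescribe interface jumps independently — for $k\ge1$ a single-cell test function does this at once, and the case $k=0$ is handled analogously provided there are more than two cells.
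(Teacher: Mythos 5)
Your proposal is correct and follows essentially the same route as the paper's proof: test the summed weak form with $\bld v_h=\bld u_h$, use the symmetry of $\bld B_1$ to reduce the volume and central-flux terms to zero, and read off anti-symmetry of each $\bld R_{j-\frac12}$ from the vanishing of the resulting quadratic form $\sum_j(\bld R_{j-\frac12}\jump{\bld u_h})\cdot\jump{\bld u_h}$ for all data. Your treatment of the converse (polarization plus single-cell test functions to isolate each node) is merely a more explicit version of the step the paper states without detail.
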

\begin{proof}
 Taking $\bld v_h = \bld u_h$ in the scheme \eqref{scheme:sys1d-w}, and using the definition of the numerical flux 
 \eqref{flux:sys1d}, we have 
 \begin{align}
 \label{temp1}
\int_I{\bld B_0(\bld u_h)_t\cdot \bld u_h}\mathrm{dx} 
=&\;\sum_{j=1}^N \intc{\bld B_1\,\bld u_h\cdot (\bld u_h)_x}\\
&\;+\sum_{j=1}^N (\bld B_1\mean{\bld u_h}
+ \bld R_{j-\frac12}\jump{\bld u_h})\cdot \jump{\bld u_h}|_{j-\frac12}.\nonumber
 \end{align}
 By symmetry of $\bld B_1$, we have 
 $\bld B_1\,\bld u_h\cdot (\bld u_h)_x=\frac12(\bld B_1\,\bld u_h\cdot \bld u_h)_x$. Applying 
 integration by parts
 of each of the above integral on the right hand side, and using the periodic boundary condition, 
 the right side of \eqref{temp1} can be simplifies as
 \begin{align*}
\sum_{j=1}^N\left. \left(\bld B_1\mean{\bld u_h}
+ \bld R_{j-\frac12}\jump{\bld u_h}\right)\cdot \jump{\bld u_h}
-(\frac12\bld B_1\,\bld u_h^+\cdot \bld u_h^+-
\frac12\bld B_1\,\bld u_h^-\cdot \bld u_h^-)
\right|_{j-\frac12}.
 \end{align*}
 A simple calculation yields 
\[ 
\bld B_1\mean{\bld u_h}\cdot\jump{\bld u_h}
-(\frac12\bld B_1\,\bld u_h^+\cdot \bld u_h^+-
\frac12\bld B_1\,\bld u_h^-\cdot \bld u_h^-) = 0.
\]
Combining this with \eqref{temp1} and \eqref{energy:sys1d}, we have 
\begin{align}
\frac{d}{dt} \frac12E_h(t)
 =\sum_{j=1}^N\left.\bld R_{j-\frac12}\jump{\bld u_h}\cdot \jump{\bld u_h}
\right|_{j-\frac12}.
\end{align}
Requiring $\frac{d}{dt}E_h(t) = 0$ for all time, for 
any initial condition $\bld u_h(0)$ simply implies that 
\[
\bld R_{j-\frac12}\bld v\cdot \bld v=0 \quad\forall \bld v\in \mathbb{R}^m,\quad \forall j.
\]
Hence, $\bld R_{j-\frac12}$ must be an anti-symmetric matrix for all $j$.
\end{proof}

\begin{remark}[Scalar case, $m=1$]
Theorem \ref{thm:sys1d} implies that,
in the case $m=1$,  there exists only {\it one} energy-conserving DG method of the form 
\eqref{scheme:sys1d}, where
 $\bld R_{j-\frac12}\equiv 0$ for all $j$, and the resulting numerical flux is nothing but 
the central flux.

It is well-known that DG methods with a central flux provide suboptimal $L^2$-convergence order of $k$ when 
polynomials of degree $k$ is used, 
with the exception that optimal convergence order of $k+1$ can be proven under the 
stringent assumption that mesh is uniform and polynomial degree $k$ is even, c.f. \cite{CockburnShu98}.
Violating either of these assumptions results in suboptimal convergence.
% We show numerical results in section \ref{sec:num1d} to convince the reader that 
% violating either of these assumptions will results a suboptimal convergent method.
% In particularly, DG methods with a central flux using polynomials of even degree $k$ on 
% nonuniform meshes convergences only with order $k$.
While there seems no hope to obtain optimal-convergent, energy-conserving DG methods 
for the scalar advection equation on nonuniform mesh,
we show in the next subsection, that by simply doubling the number of unknowns, we 
can obtain an optimal-convergent, energy-conserving DG method on general 
nonuniform meshes.
\end{remark}

\subsection{Optimal  energy-conserving DG method for advection}
\label{sub:ad}
In this subsection, we consider the following advection equation
\begin{align}
\label{advection1d}
 u_t + c\, u_x & =0, &&\hspace{-2.8cm}  (x,t)\in I\times (0,T] ,
\end{align}
with a smooth periodic initial condition $u(x,0) = u_0(x)$ for $x\in I$.
Again, we assume periodic boundary condition for simplicity.
Here we assume the speed $c$ is a piecewise positive constant on the mesh. 
Note that the equation \eqref{advection1d} can be recast into the general form \eqref{1d-symmetric-sys} with
$\bld B_0 = c^{-1}$, $\bld B_1 = 1$.

To derive the energy-conserving DG method for the advection equation \eqref{advection1d}, we shall first 
double the unknowns
by introducing an auxiliary {\it zero} function $\phi(x,t) = 0$, which shall be thought of as 
the solution of an advection equation using the opposite speed as that for $u(x,t)$, but with {\it zero} 
initial data.
Then, we get the following $2\times 2$ system:
 \begin{subequations}
 \label{aux-adv}
\begin{align}
 u_t + c\, u_x & =0, &&\hspace{-2.8cm}  (x,t)\in I\times (0,T] ,\\
 \phi_t - c\, \phi_x & =0, &&\hspace{-2.8cm}  (x,t)\in I\times (0,T] ,
\end{align}
with initial condition $u(x,0)=u_0(x)$ and $\phi(x,0)=0$.
\end{subequations}
Note that this system 
can be recast into the general form \eqref{1d-symmetric-sys} with
$\bld B_0 = \mathrm{diag}([c^{-1},c^{-1}])$, 
$\bld B_1 = \left[\begin{tabular}{cc}
             $1$ &$0$\\
             $0$ & $-1$
            \end{tabular}\right]$.
% However, we directly introduction the DG method for the equations \eqref{aux-adv}, 
% which is equivalent to \eqref{scheme:sys1d} with the above choices of $\bld B_0$ and $\bld B_1$.

The semi-discrete DG method for \eqref{aux-adv} is as follows. 
Find, for any time $t \in (0, T]$,
the unique function $(u_h, \phi_h) = (u_h(t), \phi_h(t))
\in V_h^k\times V_h^k$ such that
 \begin{subequations}
 \label{scheme:adv1d}
\begin{align} \label{scheme:adv1d-1}
\intc{(u_h)_tv_h} - \intc{c\,u_h (v_h)_x} +
c^{-}\widehat{u}_h v_h^-|_{j+\frac12} -
c^+\widehat{u}_h v_h^+|_{j-\frac12} = &\;0,\\
\label{scheme:adv1d-2}
\intc{(\phi_h)_t\psi_h} + \intc{c\,\phi_h (\psi_h)_x} -
c^-\widehat{\phi}_h \psi_h^-|_{j+\frac12} +
c^+\widehat{\phi}_h \psi_h^+|_{j-\frac12} =&\; 0,
\end{align}
 \end{subequations}
holds for all $(v_h, \psi_h)\in V_h^k\times V_h^k$ and all $\jj.$
Applying Theorem \ref{thm:sys1d}, any energy energy conserving 
numerical fluxes $\widehat{u}_h$ and $\widehat{\phi}_h$ have the following form 
\begin{subequations}
\label{flux:2x2}
\begin{align}
 \widehat{u}_h|_{j-\frac12} =  &\; \mean {u_h} +\alpha_{j-\frac12}\jump {\phi_h},\\ 
 \widehat{\phi}_h|_{j-\frac12} = &\; \mean {\phi_h} +\alpha_{j-\frac12}\jump {u_h}.
\end{align}
\end{subequations}
with $\alpha_{j-\frac12}$ being any real constant.

We collect this result in the following Corollary.
\begin{corollary}\label{coro:adv1d}
The  energy 
\[
 E_h(t) = \int_I(c^{-1}(u_h)^2+c^{-1}(\phi_h^2))\mathrm{dx}
\]
is conserved by the semi-discrete scheme \eqref{scheme:adv1d} 
with the numerical flux \eqref{flux:2x2}
for all time.
\end{corollary}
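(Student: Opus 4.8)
The plan is to obtain Corollary~\ref{coro:adv1d} as the instance of Theorem~\ref{thm:sys1d} applied to the $2\times2$ system \eqref{aux-adv}. As already recorded below \eqref{aux-adv}, that system is of the form \eqref{1d-symmetric-sys} with $\bld B_0=\mathrm{diag}([c^{-1},c^{-1}])$ and $\bld B_1=\left[\begin{tabular}{cc}$1$&$0$\\$0$&$-1$\end{tabular}\right]$, and the energy named in the corollary is exactly $\intid{(\bld B_0\bld u_h)\cdot\bld u_h}$ with $\bld u_h=(u_h,\phi_h)$. So it suffices to check that the scheme \eqref{scheme:adv1d} with flux \eqref{flux:2x2} is the general scheme \eqref{scheme:sys1d} with flux \eqref{flux:sys1d} for these two matrices and for an \emph{anti-symmetric} stabilization matrix $\bld R_{j-\frac12}$; the conclusion is then immediate from Theorem~\ref{thm:sys1d}.

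First I would dispose of a purely cosmetic rescaling. Since $c$ is a positive constant $c_j$ on each cell $I_j$, multiplying the two scalar instances of \eqref{scheme:sys1d} (one for $u_t+cu_x=0$, one for $\phi_t-c\phi_x=0$) on the cell $I_j$ by $c_j$ turns them into \eqref{scheme:adv1d-1} and \eqref{scheme:adv1d-2}, respectively; here one uses that $c_j$ coincides with the left trace $c^-$ at $x_{j+\frac12}$ and with the right trace $c^+$ at $x_{j-\frac12}$. As $c_j\neq0$ and the DG test space splits cell by cell, this rescaling does not alter the discrete solution, so the pair $(u_h,\phi_h)$ solving \eqref{scheme:adv1d} coincides with the solution $\bld u_h$ of \eqref{scheme:sys1d} for the above $\bld B_0,\bld B_1$. (Equivalently, one may simply test \eqref{scheme:adv1d} against $(c^{-1}u_h,c^{-1}\phi_h)\in V_h^k\times V_h^k$ when redoing the energy computation directly.)

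Next I would read off $\bld R_{j-\frac12}$ from the flux. For the system \eqref{aux-adv} the vector flux function is $\bld B_1\bld u=(u,-\phi)$, so \eqref{flux:sys1d} becomes
\begin{align*}
\widehat{\bld B_1\bld u}_h\big|_{j-\frac12}
=\big(\mean{u_h},\,-\mean{\phi_h}\big)+\bld R_{j-\frac12}\big(\jump{u_h},\,\jump{\phi_h}\big),
\end{align*}
whose first component is the single-valued flux for $u$ and whose second component is the single-valued flux for $-\phi$. Matching the first component with $\widehat u_h=\mean{u_h}+\alpha_{j-\frac12}\jump{\phi_h}$ and the second with $-\widehat\phi_h=-\mean{\phi_h}-\alpha_{j-\frac12}\jump{u_h}$ from \eqref{flux:2x2} forces
\begin{align*}
\bld R_{j-\frac12}=\left[\begin{tabular}{cc}$0$&$\alpha_{j-\frac12}$\\$-\alpha_{j-\frac12}$&$0$\end{tabular}\right],
\end{align*}
which is anti-symmetric for every real $\alpha_{j-\frac12}$. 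Theorem~\ref{thm:sys1d} then yields that $E_h(t)=\intid{(\bld B_0\bld u_h)\cdot\bld u_h}=\intid{(c^{-1}u_h^2+c^{-1}\phi_h^2)}$ is conserved for all $t>0$, which is the claim.

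The two points that require a little care are exactly those just used: tracking the variable speed $c$ (so that the conserved quantity comes out with the correct weight $c^{-1}$ -- in a direct energy computation one must pair the $u$-equation with $c^{-1}u_h$, not $u_h$), and the minus sign in the second row of $\bld B_1$, which together with the minus sign carried by the $\phi$-flux in \eqref{scheme:adv1d-2} is precisely what makes $\bld R_{j-\frac12}$ anti-symmetric rather than symmetric, thereby meeting the hypothesis of Theorem~\ref{thm:sys1d}. Beyond this bookkeeping, the corollary is a direct specialization and no further estimates are needed.
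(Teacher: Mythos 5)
Your proposal is correct and follows essentially the same route as the paper: the paper obtains \eqref{flux:2x2} precisely by "applying Theorem \ref{thm:sys1d}" to the augmented system \eqref{aux-adv} and states the corollary as a direct collection of that result. Your extra bookkeeping (the cell-wise rescaling by $c$ and the explicit identification $\bld R_{j-\frac12}=\left[\begin{smallmatrix}0&\alpha_{j-\frac12}\\-\alpha_{j-\frac12}&0\end{smallmatrix}\right]$) just makes explicit what the paper leaves implicit, and it is accurate.
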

\begin{remark}[Modified energy]
We specifically remark here that it is the total energy 
\[
  E_h(t) = \int_I(c^{-1}(u_h)^2+c^{-1}(\phi_h^2))\mathrm{dx}
\]
that is conserved, not the quantity $\int_I(c^{-1}(u_h)^2)\mathrm{dx}$.
The quantity $\phi_h$ is an approximation to the {\it zero} function, in general it will not be 
{\it zero} as long as $\alpha_{j-\frac12}\not=0$, due to the {\it coupling} in the numerical flux 
\eqref{flux:2x2}.
\end{remark}

Now, we turn to the error estimates of the scheme \eqref{scheme:adv1d-1}.
Clearly, taking $\alpha_{j-\frac12}=0$ decouples the two equations \eqref{scheme:adv1d-1} and 
\eqref{scheme:adv1d-2}, and we obtain the suboptimal DG method with central flux.
In the next result, we show that simply taking $\alpha_{j-\frac12}=\frac12$ for all $j$ results an optimal
convergence DG method with a clean proof. The resulting numerical fluxes are 
\begin{subequations}
\label{flux:opt}
\begin{align}
 \widehat{u}_h|_{j-\frac12} =  &\; \mean {u_h} +\frac12\jump {\phi_h},\\ 
 \widehat{\phi}_h|_{j-\frac12} = &\; \mean {\phi_h} +\frac12\jump {u_h}.
\end{align}
\end{subequations}
% In the rest of this subsection, we always assume $\alpha_{j-\frac12}=\frac12$ for all $j$.

We start by introducing a set of projections.
We shall use the following left and right Gauss-Radau projections $P_h^{\pm}$.
\begin{subequations}\label{proj1d}
\begin{align}
\intc{P_h^\pm u(x) v_h}  &= \intc{u(x) v_h} && \hspace{-2cm} \forall v_h \in
P^{k-1}(I_j), \\
 (P_h^\pm u)^\pm  &= u^\pm &&\hspace{-2cm} \text{at}~~ x_{j\mp\frac12},
\end{align}
the following approximation properties of $P_h^{\pm}$ is well-known
\begin{align}
\|P_h^\pm u - u\|_{I_j}\le Ch^{k+1}.
\end{align}
\end{subequations}

We shall also use the following coupled projection specifically designed for the DG scheme \eqref{scheme:adv1d}.
For any function $u, \phi\in H^1(I)$, 
we introduce the following coupled auxiliary projection 
 $(P_h^{1,\star} u, P_h^{2,\star} \phi)\in [V_h^k]^2$:
 \begin{subequations}
 \label{proj-adv}
 \begin{align}
 \label{proj-adv1}
  \intc{P_h^{1,\star} u(x) v_h}  &= \intc{u(x) v_h} &&  \forall v_h \in P^{k-1}(I_j),\\
 \label{proj-adv2}
  \intc{P_h^{2,\star} \phi(x) v_h}  &= \intc{\phi(x) v_h}  &&  \forall v_h \in P^{k-1}(I_j),\\
 \label{proj-adv3}
  (\mean {P_h^{1,\star} u_h} +{\frac12}\jump {P_h^{2,\star} \phi_h})\Big|_{j-\frac12} &=  u(x_{j-\frac12}),\\
 \label{proj-adv4}
 ( \mean {P_h^{2,\star} \phi_h} +{\frac12}\jump {P_h^{1,\star} u_h})\Big|_{j-\frac12} &=  \phi(x_{j-\frac12}),
 \end{align}
 \end{subequations}
 for all $j$.
 
 At a first glance, the projection \eqref{proj-adv} seems to be globally coupled. 
 The following Lemma shows that it is actually an optimal local projection.
 \begin{lemma}
 \label{lemma:proj}
  The projection \eqref{proj-adv} is well-defined, and it satisfies
  \begin{subequations}
  \label{transformX}
  \begin{align}
   P_h^{1,\star} u = \frac12(P_h^+(u+\phi)+P_h^-(u-\phi)),\\
   P_h^{2,\star} \phi = \frac12(P_h^+(u+\phi)-P_h^-(u-\phi)).
  \end{align}
  In particular, it satisfies
  \begin{align}
  \label{approx-proj}
  \| P_h^{1,\star} u -u\|_{I_j}\le Ch^{k+1}, \text{ and }
  \| P_h^{2,\star} \phi -\phi\|_{I_j}\le Ch^{k+1}.
  \end{align}
    \end{subequations}
 \end{lemma}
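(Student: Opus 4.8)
The plan is to verify the claimed closed-form expressions \eqref{transformX} directly, and then read off everything else. The key observation is that the scheme and the projection both have a natural ``diagonalizing'' change of variables: setting $w = u+\phi$ and $z = u-\phi$ turns the coupled $2\times 2$ advection system into two decoupled scalar advection equations (with speeds $c$ and $-c$), and it should do the same for the projection. So first I would introduce $w := u + \phi$ and $z := u - \phi$, define $\widetilde w := P_h^+ w$ and $\widetilde z := P_h^- z$ (the standard Gauss–Radau projections from \eqref{proj1d}), and then set
\[
U := \tfrac12(\widetilde w + \widetilde z), \qquad \Phi := \tfrac12(\widetilde w - \widetilde z),
\]
which are exactly the right-hand sides of \eqref{transformX}. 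The goal is then to check that $(U,\Phi)$ satisfies the four defining conditions \eqref{proj-adv1}–\eqref{proj-adv4} of the coupled projection.

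The interior moment conditions \eqref{proj-adv1} and \eqref{proj-adv2} are immediate: each of $\widetilde w, \widetilde z$ matches its target against all $v_h \in P^{k-1}(I_j)$ by definition of $P_h^\pm$, and $U,\Phi$ are linear combinations of $\widetilde w,\widetilde z$ while $u,\phi$ are the same linear combinations of $w,z$, so linearity gives \eqref{proj-adv1}–\eqref{proj-adv2} at once. For the two interface conditions I would compute the relevant jump/mean combinations in terms of $\widetilde w$ and $\widetilde z$. Adding \eqref{proj-adv3} and \eqref{proj-adv4} gives $\mean{U+\Phi} + \tfrac12\jump{U+\Phi} = u + \phi$ at $x_{j-1/2}$, i.e. a condition purely on $U+\Phi = \widetilde w$; since $\mean{q} + \tfrac12\jump{q} = q^+$, this reads $\widetilde w^+(x_{j-1/2}) = w(x_{j-1/2})$, which is precisely the right-endpoint interpolation property of $P_h^+ = P_h^+ w$. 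Subtracting \eqref{proj-adv4} from \eqref{proj-adv3} gives a condition purely on $U - \Phi = \widetilde z$, namely $\mean{\widetilde z} - \tfrac12\jump{\widetilde z} = \widetilde z^- (x_{j-1/2}) = z(x_{j-1/2})$, which is the left-endpoint interpolation property of $P_h^-$. Hence $(U,\Phi)$ satisfies all of \eqref{proj-adv}. Well-definedness (existence and uniqueness) then follows because the map from $(U,\Phi)$ to $(\widetilde w,\widetilde z)$ is an invertible linear change of variables and the Gauss–Radau projections $P_h^\pm$ are themselves well-defined on each cell; alternatively one notes \eqref{proj-adv} is a square linear system whose homogeneous version forces $\widetilde w = \widetilde z = 0$ and hence $U = \Phi = 0$.

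Finally, the approximation bound \eqref{approx-proj} drops out: $P_h^{1,\star}u - u = \tfrac12\big((P_h^+ w - w) + (P_h^- z - z)\big)$ and similarly for $P_h^{2,\star}\phi - \phi$, so the triangle inequality together with the standard estimate $\|P_h^\pm q - q\|_{I_j} \le C h^{k+1}$ from \eqref{proj1d} gives the result (the constant absorbing the regularity of $w = u+\phi$ and $z = u-\phi$). I do not anticipate a serious obstacle here; the one place to be careful is bookkeeping the sign conventions in the jump/mean identities $\mean q \pm \tfrac12 \jump q = q^\pm$ and matching them to the ``left'' versus ``right'' Gauss–Radau endpoints, so that the $+$ projection pairs with $w$ and the $-$ projection with $z$ rather than the other way around. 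Everything else is linear algebra plus the already-cited properties of $P_h^\pm$.
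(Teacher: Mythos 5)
Your proposal is correct and follows essentially the same route as the paper: both arguments diagonalize via the sum and difference $u\pm\phi$, identify $P_h^{1,\star}u\pm P_h^{2,\star}\phi$ with $P_h^{\pm}(u\pm\phi)$ using the identities $\mean{q}\pm\tfrac12\jump{q}=q^{\pm}$, and then conclude well-posedness from the square linear system together with the known properties of the Gauss--Radau projections. The only (immaterial) difference is direction: you construct the candidate and verify the four conditions, while the paper derives the identities from the conditions; the sign bookkeeping you flag is handled correctly in your argument.
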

 \begin{proof}
 It is clear that the equations \eqref{proj-adv} form a square system, we only need to prove its existence.
  Adding equations \eqref{proj-adv1} and \eqref{proj-adv2}, we get 
   \[
     \intc{(P_h^{1,\star} u+P_h^{2,\star} \phi) v_h}  = \intc{(u+\phi) v_h},\quad   \forall v_h \in P^{k-1}(I_j).
   \]
     Adding equations \eqref{proj-adv3} and \eqref{proj-adv4}, we get 
   \[
     \underbrace{(\mean{P_h^{1,\star} u+P_h^{2,\star}\phi}+
     \frac12\jump{(P_h^{1,\star} u+P_h^{2,\star}\phi})}_{=
     (P_h^{1,\star} u+P_h^{2,\star}\phi)^+
     }\Big|_{j-\frac12}
     = u(x_{j-\frac12})+\phi(x_{j-\frac12}).
   \]
   This directly implies that $P_h^{1,\star} u+P_h^{2,\star}\phi = P_h^+(u+\phi)$
   by uniqueness of the projection $P_h^+$.
   Similar, we have 
   $
    P_h^{1,\star} u-P_h^{2,\star} \phi = P_h^-(u-\phi).
   $
   A simple calculation implies the identities in \eqref{transform}. The error estimates are then 
   direct consequences of the estimates in \eqref{proj1d} for $P_h^\pm$. 
 \end{proof}

Now, we are ready to state our main result on the error estimates.
\begin{theorem}\label{thm:adv1d:err}
Assume that the exact solution $u$ of \eqref{advection1d} is sufficiently smooth. 
Let $u_h$ be the numerical solution of the semi-discrete DG scheme \eqref{scheme:adv1d}
using the numerical flux \eqref{flux:opt}. 
Then for $T >0$ there holds the following
error estimate
\begin{equation} \label{thm:adv-est}
\normI {u(T) - u_h(T)} 
+\normI {\phi_h(T)}
\le C (1+T) \ho,
\end{equation}
where $C$ is independent of $h$.
\end{theorem}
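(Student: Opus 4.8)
The plan is a standard energy argument whose engine is the bespoke coupled projection of Lemma~\ref{lemma:proj}. Set $e_u := u - u_h$ and $e_\phi := \phi - \phi_h = -\phi_h$ (recall $\phi \equiv 0$), and split them through \eqref{proj-adv},
\[
e_u = \eta_u + \xi_u, \qquad e_\phi = \eta_\phi + \xi_\phi,
\]
where $\eta_u := u - P_h^{1,\star}u$ and $\eta_\phi := \phi - P_h^{2,\star}\phi$ are the projection errors and $\xi_u := P_h^{1,\star}u - u_h$, $\xi_\phi := P_h^{2,\star}\phi - \phi_h \in V_h^k$ are the discrete errors. By Lemma~\ref{lemma:proj}, $\normI{\eta_u} + \normI{\eta_\phi} \le C\ho$ (note that $\eta_\phi = -\tfrac12(P_h^+u - P_h^-u)$ need not vanish even though $\phi \equiv 0$), so by the triangle inequality it suffices to show $\normI{\xi_u} + \normI{\xi_\phi} \le C(1+T)\ho$.

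First I would record Galerkin orthogonality: since $u$ is smooth and $\phi \equiv 0$ we have $\mean{u} = u$, $\jump{u} = 0$, $\mean{\phi} = \jump{\phi} = 0$ on every interface, so the flux \eqref{flux:opt} is consistent, and integrating by parts cellwise (using that $c$ is constant on each $I_j$) shows the exact pair $(u, 0)$ satisfies \eqref{scheme:adv1d} identically. Subtracting \eqref{scheme:adv1d} then gives, for all $(v_h, \psi_h) \in V_h^k\times V_h^k$ and all $j$, the same two equations with $(u_h, \phi_h)$ replaced by $(e_u, e_\phi)$ and the numerical fluxes replaced by $\widehat{e_u} := \mean{e_u} + \tfrac12\jump{e_\phi}$ and $\widehat{e_\phi} := \mean{e_\phi} + \tfrac12\jump{e_u}$.

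Next I would sum these error equations over $j$, choose $(v_h, \psi_h) = (\xi_u, \xi_\phi)$, and verify that every contribution of $(\eta_u, \eta_\phi)$ is harmless. The volume contributions involving $\eta_u, \eta_\phi$ vanish because the test functions $c(\xi_u)_x$ and $c(\xi_\phi)_x$ lie in $P^{k-1}(I_j)$, which is orthogonal to $\eta_u, \eta_\phi$ by \eqref{proj-adv1}--\eqref{proj-adv2}. The flux contributions involving $\eta$ vanish because the coupling conditions \eqref{proj-adv3}--\eqref{proj-adv4}, together with $\mean{u} + \tfrac12\jump{\phi} = u(x_{j-\frac12})$ and $\mean{\phi} + \tfrac12\jump{u} = 0$, yield
\[
\mean{\eta_u} + \tfrac12\jump{\eta_\phi} = 0 \quad\text{and}\quad \mean{\eta_\phi} + \tfrac12\jump{\eta_u} = 0 \qquad\text{at every } x_{j-\frac12}
\]
--- this is exactly the cancellation \eqref{proj-adv} was engineered to deliver. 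The only $\eta$-terms that survive are $\sum_j \intc{(\eta_u)_t \xi_u} + \sum_j \intc{(\eta_\phi)_t \xi_\phi}$; since $\partial_t$ commutes with the (fixed, linear) projection and $u_t$ is smooth these are $O(\ho)$ in $L^2$ and are bounded by Cauchy--Schwarz. Meanwhile the terms quadratic in $(\xi_u, \xi_\phi)$ are handled exactly as in the proof of Theorem~\ref{thm:sys1d}: they reduce to $\sum_j \bld R_{j-\frac12}\jump{(\xi_u,\xi_\phi)}\cdot\jump{(\xi_u,\xi_\phi)}$ with $\bld R_{j-\frac12}$ the anti-symmetric stabilization matrix of the $\alpha = \tfrac12$ flux of Corollary~\ref{coro:adv1d}, hence vanish identically.

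What remains is, in the $c^{-1}$-weighted energy norm (equivalent to $\normI{\cdot}$ since $c$ is bounded above and below),
\[
\tfrac12\tfrac{d}{dt}\bigl(\normI{\xi_u}^2 + \normI{\xi_\phi}^2\bigr) \le C\ho\bigl(\normI{\xi_u} + \normI{\xi_\phi}\bigr),
\]
so that $\tfrac{d}{dt}\bigl(\normI{\xi_u}^2 + \normI{\xi_\phi}^2\bigr)^{1/2} \le C\ho$. Integrating in time and using that the initial discrete error is $O(\ho)$ --- take $\phi_h(0) = 0$ and $u_h(0)$ any $O(\ho)$-accurate approximation of $u_0$, so that $\xi_u(0) = P_h^{1,\star}u_0 - u_h(0)$ and $\xi_\phi(0) = P_h^{2,\star}(0)$ are both $O(\ho)$ by Lemma~\ref{lemma:proj} --- gives $\normI{\xi_u(T)} + \normI{\xi_\phi(T)} \le C(1+T)\ho$, and combining with the bounds on $\eta_u(T), \eta_\phi(T)$ proves \eqref{thm:adv-est}. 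The crux of the whole argument is the displayed flux-annihilates-projection-error identity; everything else is routine, and in particular --- in contrast with the central-flux analysis recalled after Theorem~\ref{thm:sys1d} --- no superconvergence, inverse inequality, or uniform-mesh assumption enters, which is why the proof is clean.
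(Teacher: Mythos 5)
Your argument is correct and is essentially the paper's own proof: the same splitting of the error via the coupled projection of Lemma~\ref{lemma:proj} (your $(\eta,\xi)$ are the paper's $(\delta,\varepsilon)$), the same Galerkin-orthogonality/energy argument in which \eqref{proj-adv1}--\eqref{proj-adv4} are designed precisely so that both the volume and the interface contributions of the projection error vanish, and the same Cauchy--Schwarz plus integration in time. The only cosmetic point is that the test functions should be the weighted pair $(c^{-1}\xi_u,\,c^{-1}\xi_\phi)$, as the paper takes, so that the quadratic interface terms cancel even when the piecewise-constant $c$ jumps across a cell boundary --- which is what you implicitly do when you state the resulting identity in the $c^{-1}$-weighted energy norm.
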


\newcommand{\euu}{\varepsilon_u}
\newcommand{\duu}{\delta_u}
\newcommand{\epp}{\varepsilon_\phi}
\newcommand{\dpp}{\delta_\phi}
\newcommand{\puu}{P_h^{1,\star}}
\newcommand{\ppp}{P_h^{2,\star}}
\begin{proof}
The proof is a standard energy argument. We only give a sketch.
We denote 
\begin{align}
\label{split}
 \euu := \puu u - u_h, \quad 
 \duu := u - \puu u, \\
 \epp := \ppp \phi - \phi_h, \quad 
 \epp := \phi -\ppp\phi.\nonumber
\end{align}
Then, consistency the DG scheme \ref{scheme:adv1d} and definition of the projection \eqref{proj-adv}
directly implies that
% \begin{subequations}
\begin{align*}
\sum_{j=1}^N\left(\intc{(\euu)_tv_h-c\,\euu (v_h)_x}\right) -
\left.(\mean{\euu}+\frac12\jump{\epp}) \jump{cv_h}\right|_{j-\frac12}= &\;
\intid{(\duu)_tv_h},\\
\sum_{j=1}^N\left(\intc{(\epp)_t\psi_h+c\,\epp (\psi_h)_x}\right) +
\left.(\mean{\epp}+\frac12\jump{\euu}) \jump{c\phi_h}\right|_{j-\frac12}= &\;
\intid{(\dpp)_t\phi_h},
\end{align*}
% \end{subequations}
for all $(v_h,\phi_h)\in[V_h^k]^2$. 
Taking $v_h=c^{-1}\euu,\phi_h=c^{-1}\epp$ (recall that $c$ is a constant on each $I_j$) in the above error equations and adding, we get
the following energy identity
% \begin{subequations}
\begin{align*}
\intid{c^{-1}(\euu)_t\euu+
c^{-1}(\epp)_t\epp}
=
\intid{c^{-1}(\duu)_t\euu+
c^{-1}(\dpp)_t\epp}
\end{align*}
Finally, the error estimate in Theorem \ref{thm:adv1d:err} is obtained by 
applying the Cauchy-Schwarz inequality, and 
combing the approximation property of the projection in Lemma \ref{lemma:proj}, and an triangle inequality.
\end{proof}
\begin{remark}[$\phi_h$ approximates {\it zero}]
Note that $\phi_h$ is an order $k+1$ approximation to the {\it zero} function.
\end{remark}

% \begin{remark}
%  The numerical flux looks quite similar to the upwinding flux, in which the two jump terms shall be  of 
% \end{remark}

\begin{remark}[A natural extension to systems]
\label{rk:sys}
 This result can be directly used to obtain optimal convergent energy-conserving DG methods
for any constant-coefficient, linear symmetric hyperbolic systems \eqref{1d-symmetric-sys}, with 
 a doubling of the unknowns by introducing the auxiliary {\it zero} function $\bld \phi(x,t)$ that solves
 \begin{align}
 \label{aux-1d}
  \bld B_0\,{\bld \phi}_t - \bld B_1 \bld \phi_x =0,\quad\forall   (x,t)\in I\times (0,T]  
 \end{align}
 with {\it zero} initial condition.
 The resulting scheme reads as follows: Find, for any time $t\in(0,T]$,
 the unique functions $(\bld u_h, \bld\phi_h)\in [V_h^k]^m\times [V_h^k]^m$ such that
 \begin{subequations}
 \label{scheme:sysA}
 \begin{align} 
% \small
%  \label{scheme:sysX1}
\left(\intc{\bld B_0(\bld u_h)_t\cdot \bld v_h-\bld B_1\,\bld u_h\cdot (\bld v_h)_x}\right) +
\widehat{\bld B_1\bld u}_h\cdot \bld v_h^-|_{j+\frac12} -
\widehat{\bld B_1\bld u}_h \cdot\bld v_h^+|_{j-\frac12} = &\;0,\\
%  \label{scheme:sysX2}
\left(\intc{\bld B_0(\bld \phi_h)_t\cdot \bld \psi_h+\bld B_1\,\bld \phi_h\cdot (\bld \psi_h)_x}
\right)-
\widehat{\bld B_1\bld \phi}_h\cdot \bld \psi_h^-|_{j+\frac12} +
\widehat{\bld B_1\bld \phi}_h \cdot\bld \psi_h^+|_{j-\frac12} = &\;0,
\end{align}
with the numerical fluxes
\begin{align}
\label{flux-sysA}
 \widehat{\bld B_1\bld u}_h|_{j-\frac12} = 
\bld B_1\mean{\bld u_h}
+\frac12 \bld B_1\jump{\bld \phi_h},
\quad
 \widehat{\bld B_1\bld \phi}_h|_{j-\frac12} = 
\bld B_1\mean{\bld \phi_h}
+\frac12 \bld B_1\jump{\bld u_h}.
\end{align}
 \end{subequations}

However, doubling the unknowns might be computationally too expensive. 
For certain special and important cases, e.g. acoustics \cite{XingChouShu13}, there exists 
optimal convergence energy-conserving DG methods without the need of doubling the unknowns.
In the next subsection, we derive optimal energy-conserving DG methods for the acoustics equation 
 in a slightly more general form, c.f. \cite{LeVeque02}.
\end{remark}

% \begin{remark}
%  We can also work with variable velocity, with $c$ being piecewise smooth, 
%  optimal convergence of the method will still hold, we refer interested ready to \cite{CockburnDongGuzman10}.
% \end{remark}

\subsection{Optimal energy-conserving DG method for acoustics}
\label{sub:ac}
In this subsection, we consider the following acoustics equation
\begin{align}
\label{acoustics1d}
\left[\begin{tabular}{c}
 $p$\\
 $u$
\end{tabular}
\right]_t
+
% \underbrace{
\left[\begin{tabular}{cc}
 $u_0$&$K_0$\\
 $1/\rho_0$&$u_0$\\
\end{tabular}
\right]
% }_{:=A}
\left[\begin{tabular}{c}
 $p$\\
 $u$
\end{tabular}
\right]_x& =0, &&  (x,t)\in I\times (0,T].
\end{align}
with a smooth periodic initial condition and a periodic boundary condition.
% with a smooth periodic initial condition 
% $p(x,0) = p_0(x)$  and $u(x,0) = w_0(x)$ for $x\in I$.
Here $u$ is the velocity, $p$ is the pressure, and $u_0$ is the background velocity, 
$\rho_0$ is the background density, and $K_0$ is the {\it bulk modulus of compressibility} of the material, c.f. 
\cite{LeVeque02}.
The coefficients $\rho_0, u_0$ and $K_0$ are assumed to be positive constants.

Note that the equation \eqref{acoustics1d} is a $2\times 2$ linear symmetric hyperbolic system, which 
can be recast into the form \eqref{1d-symmetric-sys} with $\bld u=[p, u]'$, and the
coefficient matrices
\begin{align}
\label{acoustics-matrix}
\bld B_0 = 
\left[\begin{tabular}{cc}
 $1/K_0$&$0$\\
 $0$&$\rho_0$\\
\end{tabular}
\right]
, \text{ and }  
\bld B_1 = \left[\begin{tabular}{cc}
 $u_0/K_0$&$1$\\
 $1$&$u_0\rho_0$\\
\end{tabular}
\right].
\end{align}
We see that the energy 
\begin{align}
 \label{energy-acustics}
 E(t) = \int_I (p(x)^2/K_0+\rho_0u(x)^2)\mathrm{dx}
\end{align}
is conserved for the system \eqref{acoustics1d}.

Theorem \ref{thm:sys1d} implies that 
semi-discrete energy-conserving DG method for the resulting 
$2\times 2$ symmetric hyperbolic system 
shall be of the form \eqref{scheme:sys1d} with the coefficient matrices \eqref{acoustics-matrix}, and the  
following numerical flux 
\begin{align}
\label{acoustics-flux-matrix}
 \widehat{\bld B_1\bld u}_h|_{j-\frac12} = 
\bld B_1\mean{\bld u_h}
+ \alpha_{j-\frac12}
\left[\begin{tabular}{cc}
 $0$&$1$\\
 $-1$&$0$\\
\end{tabular}
\right]
\jump{\bld u_h},
\end{align}
with $\alpha_{j-1/2}$ a scalar constant for all $j$.

Translating this condition back to the non-symmetric system \eqref{acoustics1d}, 
we get the following equivalent formulation of the method.
Find, for any time $t \in (0, T]$,
the unique function $(p_h, u_h) = (p_h(t), u_h(t))
\in V_h^k\times V_h^k$ such that
 \begin{subequations}
 \label{scheme:acoustics1d}
\begin{align} \label{scheme:acoustics1d-1}
\intc{(p_h)_tq_h -(u_0\,p_h+K_0u_h) (q_h)_x} +
\widehat{f}_h q_h^-|_{j+\frac12} -
\widehat{f}_h q_h^+|_{j-\frac12} = &\;0,\\
\label{scheme:acoustics1d-2}
\intc{(u_h)_tv_h -(p_h/\rho_0+u_0u_h) (v_h)_x} +
\widehat{g}_h q_h^-|_{j+\frac12} -
\widehat{g}_h q_h^+|_{j-\frac12} = &\;0,
\end{align}
 for all $(q_h,v_h)\in V_h^k\times V_h^k$, for all $j$, where the numerical fluxes
 $\widehat f_h$ and $\widehat g_h$ are given by
 \begin{align}
 \label{acoustics-flux-1}
 \widehat f_h = &\; u_0\mean {p_h}+K_0\mean{u_h}+\alpha_{j-\frac12} K_0\jump{u_h},\\
 \label{acoustics-flux-2}
 \widehat g_h = &\; \mean {p_h}/\rho_0+u_0\mean{u_h}-\alpha_{j-\frac12} \jump{p_h}/\rho_0.
 \end{align}
 \end{subequations}
We state the energy-conservation property of this method in the following Corollary.
\begin{corollary}\label{coro:ac1d}
The  energy 
\[
 E_h(t) = \intid{(p_h(x)^2/K_0+\rho_0u_h(x)^2)}
\]
is conserved by the semi-discrete scheme \eqref{scheme:acoustics1d} 
for all time.
\end{corollary}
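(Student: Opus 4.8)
The plan is to obtain Corollary~\ref{coro:ac1d} as a direct consequence of Theorem~\ref{thm:sys1d}, rather than by repeating an energy computation from scratch. The first step is to recognize that the scheme \eqref{scheme:acoustics1d} with the fluxes \eqref{acoustics-flux-1}--\eqref{acoustics-flux-2} is nothing but the symmetric-system scheme \eqref{scheme:sys1d} written out component-wise for the unknown $\bld u_h=[p_h,u_h]'$, with the coefficient matrices $\bld B_0,\bld B_1$ of \eqref{acoustics-matrix} and the numerical flux \eqref{acoustics-flux-matrix}. To verify this I would take the test function in \eqref{scheme:sys1d} of the split form $\bld v_h=[q_h,0]'$ and then $\bld v_h=[0,v_h]'$. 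Expanding $\bld B_0(\bld u_h)_t\cdot\bld v_h$, $\bld B_1\bld u_h\cdot(\bld v_h)_x$ and the two components of $\widehat{\bld B_1\bld u}_h$ from \eqref{acoustics-flux-matrix}, the first choice reproduces $1/K_0$ times \eqref{scheme:acoustics1d-1} provided $\widehat f_h=K_0\,(\widehat{\bld B_1\bld u}_h)_1$, and the second choice reproduces $1/\rho_0$ times \eqref{scheme:acoustics1d-2} provided $\widehat g_h=\rho_0^{-1}(\widehat{\bld B_1\bld u}_h)_2$; a term-by-term comparison (using that $\bld B_1$ has first row $[u_0/K_0,\,1]$ and second row $[1,\,u_0\rho_0]$, and that the skew-symmetric matrix in \eqref{acoustics-flux-matrix} maps $\jump{\bld u_h}$ to $[\jump{u_h},\,-\jump{p_h}]'$) shows that these two formulas are precisely \eqref{acoustics-flux-1} and \eqref{acoustics-flux-2}. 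In particular, the stabilization matrix of the acoustics scheme is $\alpha_{j-\frac12}$ times the skew-symmetric matrix appearing in \eqref{acoustics-flux-matrix}.

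Once this identification is in place, the second step is immediate: that stabilization matrix is anti-symmetric for every $j$, so Theorem~\ref{thm:sys1d} applies and yields that $E_h(t)=\intid{\bld B_0\bld u_h\cdot\bld u_h}=\intid{(p_h^2/K_0+\rho_0 u_h^2)}$ is conserved for all $t>0$, which is exactly the claim.

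Alternatively, one may give a self-contained argument mirroring the proof of Theorem~\ref{thm:sys1d}: take $q_h=p_h/K_0$ in \eqref{scheme:acoustics1d-1} and $v_h=\rho_0 u_h$ in \eqref{scheme:acoustics1d-2}, sum over $j$, use the periodic boundary condition to collapse the interface contributions to $-\sum_{j}(\widehat{\,\cdot\,})\jump{\,\cdot\,}_{j-\frac12}$, integrate the volume flux terms by parts (legitimate since $u_0,K_0,\rho_0$ are constants), and check that the ``mean'' part of the fluxes cancels the boundary quadratics exactly as in the identity used for $\bld B_1\mean{\bld u_h}\cdot\jump{\bld u_h}$ in Theorem~\ref{thm:sys1d}, while the $\alpha_{j-\frac12}$-contributions combine into $\alpha_{j-\frac12}\bigl(\jump{u_h}\jump{p_h}-\jump{p_h}\jump{u_h}\bigr)=0$ because of the skew coupling between \eqref{acoustics-flux-1} and \eqref{acoustics-flux-2}. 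Either route involves no estimates — the result is a pure algebraic identity. The only point that requires care, and hence the main (albeit minor) obstacle, is the bookkeeping of the scaling factors $1/K_0$ and $\rho_0$ on the two equations: they must be chosen so that the time-derivative terms assemble into $\tfrac12\frac{d}{dt}E_h(t)$ and, after integration by parts, the off-diagonal entries of $\bld B_1$ pair up so that all residual boundary terms vanish.
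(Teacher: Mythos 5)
Your proposal is correct and follows the same route the paper intends: the corollary is obtained by recognizing \eqref{scheme:acoustics1d} with \eqref{acoustics-flux-1}--\eqref{acoustics-flux-2} as the symmetric-system scheme \eqref{scheme:sys1d} with coefficients \eqref{acoustics-matrix} and the anti-symmetric stabilization matrix of \eqref{acoustics-flux-matrix}, and then invoking Theorem \ref{thm:sys1d}. (Only a trivial bookkeeping remark: the second test-function choice $\bld v_h=[0,v_h]'$ yields $\rho_0$ times \eqref{scheme:acoustics1d-2}, not $1/\rho_0$ times; this does not affect the argument.)
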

\begin{remark}[Alternating flux]
 In the special case when $u_0=0$, taking $\alpha_{j-\frac12}=1/2$ for all $j$, or 
 $\alpha_{j-\frac12}=-1/2$ for all $j$,
 results the 
 optimal convergent, energy conserving  DG method with an alternating flux considered in \cite{XingChouShu13}.
\end{remark}

Next, we turn to the error estimates of the scheme \eqref{scheme:acoustics1d} with a proper choice of the {\it stabilization}
parameter $\alpha_{j-\frac12}$.
It turns out the error estimates is drastically different, which
depends on whether the background velocity $u_0$ 
is subsonic ($u_0<c_0$) or supersonic ($u_0>c_0$), where $c_0:=\sqrt{K_0/\rho_0}$ is the speed of 
sound.
\subsubsection{Subsonic case ($u_0<c_0$)}
In this case, the matrix $\bld B_1$ has a positive eigenvalue
and a negative eigenvalue.
In particular, there exists an orthogonal matrix $\bld S$ with determinant $1$ such that
  \begin{align}
  \label{eig}
   \bld B_1 = \bld S\,\mathrm{diag}([\lambda_+, \lambda_-])\bld S^{-1},
    \end{align}
%   where 
%   \[
%    \lambda_{\pm} = \frac{(u_0/K_0+u_0\rho_0)\pm
%    \sqrt{(u_0/K_0-u_0\rho_0)^2+4}
%    }{2}.
%   \]
with $\lambda_{+}>0>\lambda_{-}$ being the two roots of the quadratic equation
\[
(\lambda-u_0/K_0)(\lambda-u_0\rho_0)-1 =0.
\]
  We have $\lambda_-\lambda_+ =\frac{u_0^2}{c_0^2}-1<0$.
%   Note that, up to a scaling of $-1$ for one row, we can ensure that the orthogonal matrix $\bld S$
%   satisfies
A simple calculation yields that, for any orthogonal matrix $\bld S\in \mathbb{R}^{2\times 2}$
with determinant $1$, there holds
  \begin{align}
  \label{oo}
   \bld S\left[\begin{tabular}{cc}
 $0$&$1$\\
 $-1$&$0$\\
\end{tabular}
\right]\bld S^{-1} = \left[\begin{tabular}{cc}
 $0$&$1$\\
 $-1$&$0$\\
\end{tabular}
\right],
    \end{align}
which will be used in the proof of Lemma \ref{lemma:proja} below.
%   and $\bld w=[w_1, w_2]' = \bld S \bld u$ is the characteristic variable.

We take the stabilization parameter 
$\alpha_{j-\frac12} = \frac12\sqrt{-\lambda_-\lambda_+}=\frac12\sqrt{1-\frac{u_0^2}{c_0^2}}$.  

To derive the optimal error estimate, 
we shall use the following coupled projection.
% specifically designed for the DG scheme \eqref{scheme:acoustics1d}.
We work with vector notation.
For any function $\bld u =(u_1, u_2)\in [H^1(I)]^2$, 
we introduce the following coupled auxiliary projection 
 $P_h^\star \bld u 
%  =(P_h^{1,\star} u_1, P_h^{2,\star} u_2)
 \in [V_h^k]^2$:
 \begin{subequations}
 \label{proj-acoustics}
 \begin{align}
 \label{proj-ac1}
  \intc{P_h^{\star}\bld u\cdot \bld v_h} = \intc{\bld u(x)\cdot\bld v_h} &\quad\quad  \forall \bld v_h \in [P^{k-1}(I_j)]^2,\\
 \label{proj-ac2}
 (\bld B_1\mean {P_h^{\star} \bld u_h} +
 \alpha
\left[\begin{tabular}{cc}
 $0$&$1$\\
 $-1$&$0$\\
\end{tabular}
\right]
 \jump {P_h^{\star} \bld u_h})\Big|_{j-\frac12} &=  \bld B_1\bld u(x_{j-\frac12}),
 \end{align}
 \end{subequations}
 for all $j$, where $\alpha =\frac12 \sqrt{1-\frac{u_0^2}{c_0^2}}$ is the stabilization parameter.
 
Similar to the advection case in Lemma \ref{lemma:proj}, 
the above projection is also an optimal local projection.
 \begin{lemma}
 \label{lemma:proja}
  The projection \eqref{proj-acoustics} is well-defined, and it satisfies
  \begin{subequations}
  \label{transform}
  \begin{align}
   P_h^{\star}\bld u = \bld S 
   \left[\begin{tabular}{c}
 $\Pi_h^{1,\star}w_1$\\[1ex]
 $\Pi_h^{2,\star}w_2$\\
\end{tabular}
\right],
  \end{align}
where $\bld w = (w_1,w_2)=\bld S^{-1}\bld u$ is the characteristic variable, and 
\begin{align}
 \label{pi1}
 \Pi_h^{1,\star}w_1 =&\; \frac12P_h^+\left(w_1+\sqrt{\frac{\lambda_+}{-\lambda_-}}w_2\right)+\frac12
 P_h^-\left(w_1-\sqrt{\frac{\lambda_+}{-\lambda_-}}w_2\right),\\
 \label{pi2}
 \Pi_h^{2,\star}w_2 = &\;\frac12P_h^+\left(w_2+\sqrt{\frac{-\lambda_-}{\lambda_+}}w_1\right)+\frac12
 P_h^-\left(w_2-\sqrt{\frac{-\lambda_-}{\lambda_+}}w_1\right),
\end{align}
In particular, it satisfies
  \begin{align}
  \label{approx-proj-ac}
  \| P_h^{\star} \bld u -\bld u\|_{I_j}\le Ch^{k+1}.
  \end{align}
    \end{subequations}
 \end{lemma}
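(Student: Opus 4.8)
The plan is to follow the strategy of Lemma~\ref{lemma:proj}: diagonalize $\bld B_1$, pass to the characteristic variable, and show that in those coordinates the globally coupled system \eqref{proj-acoustics} splits into two uncoupled Gauss--Radau projections, from which \eqref{transform} and \eqref{approx-proj-ac} follow. First I would record that on the $N$-cell periodic mesh \eqref{proj-acoustics} is a square linear system: \eqref{proj-ac1} imposes $2k$ scalar conditions per cell and \eqref{proj-ac2} imposes $2$ conditions per interface, for a total of $2N(k+1)$ equations matching the $2N(k+1)$ degrees of freedom of $P_h^\star\bld u\in[V_h^k]^2$. Hence it suffices to exhibit the solution in closed form (existence); the same computation will show any solution must coincide with it (uniqueness), so the projection is well defined.

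Next I would change variables. With $\bld B_1=\bld S\,\mathrm{diag}([\lambda_+,\lambda_-])\bld S^{-1}$ as in \eqref{eig}, $\bld S$ orthogonal of determinant $1$, set $\bld w=\bld S^{-1}\bld u$ and look for $P_h^\star\bld u=\bld S\,\bld q$ with $\bld q=(q_1,q_2)\in[V_h^k]^2$ (so that $q_i$ will turn out to be the $\Pi_h^{i,\star}$ of \eqref{pi1}--\eqref{pi2}). Testing \eqref{proj-ac1} with $\bld v_h=\bld S\,\tilde{\bld v}_h$ and using orthogonality of $\bld S$ shows the moment conditions decouple into $\intc{q_i\,v_h}=\intc{w_i\,v_h}$ for $i=1,2$ and all $v_h\in P^{k-1}(I_j)$. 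Left-multiplying the interface condition \eqref{proj-ac2} by $\bld S^{-1}$, using $\bld S^{-1}\bld B_1=\mathrm{diag}([\lambda_+,\lambda_-])\bld S^{-1}$ and the fact that, by \eqref{oo}, $\bld S^{-1}$ commutes with the matrix $J$ appearing there, converts \eqref{proj-ac2} into $\mathrm{diag}([\lambda_+,\lambda_-])\mean{\bld q}+\alpha J\jump{\bld q}=\mathrm{diag}([\lambda_+,\lambda_-])\,\bld w$ at every $x_{j-\frac12}$.

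The key step is to decouple this $2\times2$ interface relation, and this is exactly where the prescribed value $\alpha=\tfrac12\sqrt{-\lambda_-\lambda_+}$ is used. Writing the relation as $M_+(\bld q^+-\bld w)+M_-(\bld q^--\bld w)=0$ with $M_\pm=\tfrac12\,\mathrm{diag}([\lambda_+,\lambda_-])\pm\alpha J$, one checks that $\det M_\pm=\tfrac14\lambda_+\lambda_-+\alpha^2=0$, so each $M_\pm$ is rank one and their ranges are complementary lines; consequently the relation forces $M_+(\bld q^+-\bld w)=0$ and $M_-(\bld q^--\bld w)=0$ separately. These two scalar identities state precisely that the characteristic combinations appearing in \eqref{pi1}--\eqref{pi2} (of the form $w_1\pm\kappa\,w_2$, with the weight $\kappa$ read off from $\ker M_\pm$) are reproduced by the matching combination of $q_1,q_2$ using the trace from one side only at each interface. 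Together with the moment conditions above, they are exactly the defining relations of the Gauss--Radau projections $P_h^\pm$ of \eqref{proj1d} applied to those combinations; solving the resulting $2\times2$ system cellwise for $(q_1,q_2)$ gives \eqref{pi1}--\eqref{pi2}, hence \eqref{transform}, and in particular produces the solution, so $P_h^\star$ is well defined.

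The estimate \eqref{approx-proj-ac} is then immediate: the entries of $\bld S$ and $\bld S^{-1}$ are bounded constants and $w_1\pm\kappa\,w_2$ is a fixed linear combination of the components of the smooth $\bld u$, so applying the standard Gauss--Radau bound $\|P_h^\pm g-g\|_{I_j}\le Ch^{k+1}$ from \eqref{proj1d} to those combinations, together with the triangle inequality, yields $\|P_h^\star\bld u-\bld u\|_{I_j}\le Ch^{k+1}$. I expect the one genuine obstacle to be the linear-algebra step of the third paragraph---verifying $\det M_\pm=0$ for the prescribed $\alpha$ and identifying the resulting scalar interface conditions with the Gauss--Radau conditions; everything else is bookkeeping parallel to the advection case of Lemma~\ref{lemma:proj}.
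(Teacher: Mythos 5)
Your proposal is correct and follows essentially the same route as the paper: pass to the characteristic variable via $\bld S$, observe that the moment and interface conditions decouple into the defining relations of the one-sided Gauss--Radau projections $P_h^\pm$ applied to fixed linear combinations of $w_1,w_2$, and solve the resulting $2\times 2$ system cellwise. Your packaging of the decoupling step as $M_+(\bld q^+-\bld w)+M_-(\bld q^--\bld w)=0$ with $\det M_\pm=\tfrac14\lambda_+\lambda_-+\alpha^2=0$ and complementary rank-one ranges is just a clean reformulation of the paper's row-combination computation, and it correctly isolates where the specific choice $\alpha=\tfrac12\sqrt{-\lambda_+\lambda_-}$ is used.
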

 \begin{proof}
 The proof follows the lines for that for Lemma \ref{lemma:proj}. 
 We first turn to projection for the characteristic variable $\bld w$, and then transform back to the 
 primitive variable $\bld u$. 
Since $\bld B_1$ and $\bld S$ are constant matrices, we have $\bld S^{-1}P_h^\star\bld u=P_h^\star(\bld S^{-1}\bld u)=
P_h^\star\bld w$. Multiplying both sides of equation \eqref{proj-ac1} by $\bld S$, and 
both side of equations \eqref{proj-ac2} by $\Lambda^{-1}\bld S^{-1}$, and using the fact that 
$\alpha =\frac12 \sqrt{-\lambda_-\lambda_+}$ and using the equation \eqref{oo}, we get the following
projection for the characteristic variable $\bld w$:
\begin{align*}
  \intc{P_h^{\star}\bld w\cdot \bld v_h} = \intc{\bld w(x)\cdot\bld v_h} &\quad\quad  \forall \bld v_h \in [P^{k-1}(I_j)]^2,\\
 (\mean {P_h^{\star} \bld w} +
 \frac12
\left[\begin{tabular}{cc}
 $0$&$\sqrt{-\lambda_-/\lambda_+}$\\
 $\sqrt{-\lambda_+/\lambda_-}$&$0$\\
\end{tabular}
\right]
 \jump {P_h^{\star} \bld w_h})\Big|_{j-\frac12} &=  \bld w(x_{j-\frac12}).
\end{align*}
% where we used the identity \eqref{oo} to get the second equation.
A similar algebraic manipulation as that in the proof of Lemma \ref{lemma:proj} yields
\[
 \sqrt{\lambda_+}P_h^{1,\star} w_1 \pm
 \sqrt{\lambda_-}P_h^{1,\star} w_2 =
 P_h^\pm(
  \sqrt{\lambda_+}w_1
  +\sqrt{-\lambda_-}w_2),
\]
and the equalities and estimate \eqref{transform} in Lemma \ref{lemma:proja} 
follow directly.
 \end{proof}

 With the help of this projection, optimal error estimates follow 
directly. We skip the proof, which is 
 identical to the proof of Theorem \ref{thm:adv1d:err}.
 \begin{theorem}\label{thm:ac1d:err}
Assume that the exact solution $(p,u)$ of \eqref{acoustics1d} is sufficiently smooth. 
Let $(p_h, u_h)$ be the numerical solution of the semi-discrete DG scheme \eqref{scheme:acoustics1d}
with $\alpha_{j-\frac12}=\sqrt{1-u_0^2/c_0^2}$ in the numerical fluxes \eqref{acoustics-flux-1} and 
\eqref{acoustics-flux-2}. 
Then for $T >0$ there holds the following
error estimate
\begin{equation} \label{thm:ac-est}
\normI {u(T) - u_h(T)} 
+
\normI {p(T) - p_h(T)} 
\le C (1+T) \ho,
\end{equation}
where $C$ is independent of $h$.
\end{theorem}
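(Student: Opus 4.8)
The plan is to run the standard energy argument already used for Theorem~\ref{thm:adv1d:err}, now driven by the coupled projection $P_h^\star$ of Lemma~\ref{lemma:proja}. First I would rewrite the scheme \eqref{scheme:acoustics1d} in the equivalent symmetric form \eqref{scheme:sys1d}, with $\bld u=[p,u]'$, the matrices \eqref{acoustics-matrix}, and the energy-conserving flux \eqref{acoustics-flux-matrix} for the stabilization parameter $\alpha_{j-\frac12}$ chosen as in Lemma~\ref{lemma:proja}. I then split the error as $\bld\varepsilon:=P_h^\star\bld u-\bld u_h$ and $\bld\delta:=\bld u-P_h^\star\bld u$, so that $\bld u-\bld u_h=\bld\delta+\bld\varepsilon$. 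Lemma~\ref{lemma:proja} already gives $\normI{\bld\delta}\le C\ho$, and since $P_h^\star$ commutes with $\partial_t$ and $\bld u$ is smooth, also $\normI{\partial_t\bld\delta}\le C\ho$.

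Next I would set up the error equation. Because the exact solution is smooth, $\bld B_1\bld u$ is continuous across each $x_{j-\frac12}$, so $\bld u$ satisfies the scheme exactly (consistency of \eqref{acoustics-flux-matrix}); subtracting the scheme satisfied by $\bld u_h$ and testing against $\bld v_h\in[V_h^k]^m$ yields an equation in $\bld\delta+\bld\varepsilon$. The crucial point, exactly as for $P_h^{1,\star},P_h^{2,\star}$ in Theorem~\ref{thm:adv1d:err}, is that $P_h^\star$ is engineered so that $\bld\delta$ drops out of the spatial part: the volume term $\intc{\bld B_1\bld\delta\cdot(\bld v_h)_x}$ vanishes since $\bld B_1(\bld v_h)_x\in[P^{k-1}(I_j)]^2$ and by the orthogonality \eqref{proj-ac1}, while the interface contribution of $\bld\delta$ vanishes because \eqref{proj-ac2} forces the energy-conserving numerical flux of $P_h^\star\bld u$ to coincide with the single-valued exact flux $\bld B_1\bld u$. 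What remains is, for all $\bld v_h\in[V_h^k]^m$,
\[
\sum_{j=1}^N\Bigl(\intc{\bld B_0(\bld\varepsilon)_t\cdot\bld v_h-\bld B_1\bld\varepsilon\cdot(\bld v_h)_x}-\widehat{\bld B_1\bld\varepsilon}_h\cdot\jump{\bld v_h}\big|_{j-\frac12}\Bigr)=-\intid{\bld B_0(\bld\delta)_t\cdot\bld v_h},
\]
where $\widehat{\bld B_1\bld\varepsilon}_h$ is the flux \eqref{acoustics-flux-matrix} applied to $\bld\varepsilon$, whose stabilization matrix is still anti-symmetric.

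Then I would take $\bld v_h=\bld\varepsilon$. By the computation in the proof of Theorem~\ref{thm:sys1d}, anti-symmetry of the stabilization makes the whole spatial form vanish at $\bld v_h=\bld\varepsilon$, so the left-hand side collapses to $\frac{d}{dt}\tfrac12\intid{\bld B_0\bld\varepsilon\cdot\bld\varepsilon}$ and $\frac{d}{dt}\tfrac12\intid{\bld B_0\bld\varepsilon\cdot\bld\varepsilon}=-\intid{\bld B_0(\bld\delta)_t\cdot\bld\varepsilon}$. Since $\bld B_0$ is symmetric positive definite and piecewise constant, $\|\bld w\|_{\bld B_0}^2:=\intid{\bld B_0\bld w\cdot\bld w}$ is a norm equivalent to $\normI{\bld w}^2$ uniformly in $h$; Cauchy--Schwarz then gives $\frac{d}{dt}\|\bld\varepsilon\|_{\bld B_0}\le\|(\bld\delta)_t\|_{\bld B_0}\le C\ho$. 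Integrating from $0$ to $T$ and using $\|\bld\varepsilon(0)\|_{\bld B_0}\le C\ho$ (take $\bld u_h(0)=P_h^\star\bld u_0$, so $\bld\varepsilon(0)=0$, or any optimal projection of the data) gives $\|\bld\varepsilon(T)\|_{\bld B_0}\le C(1+T)\ho$, hence $\normI{\bld\varepsilon(T)}\le C(1+T)\ho$. The estimate \eqref{thm:ac-est} then follows from the triangle inequality $\normI{\bld u(T)-\bld u_h(T)}\le\normI{\bld\delta(T)}+\normI{\bld\varepsilon(T)}$, reading off the two components $p-p_h$ and $u-u_h$.

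The only genuinely non-routine ingredient is the one already isolated in Lemma~\ref{lemma:proja}: that the coupled projection exists, is local and optimal, and absorbs the projection error out of the spatial part of the error equation. This is where the subsonic hypothesis $u_0<c_0$ is essential, via the diagonalization \eqref{eig} with $\lambda_-<0<\lambda_+$, the choice $\alpha_{j-\frac12}=\tfrac12\sqrt{-\lambda_-\lambda_+}$, and the invariance \eqref{oo}, which together decouple the characteristic-variable projection into two well-defined one-sided Gauss--Radau projections. Everything downstream is Cauchy--Schwarz and Gronwall's inequality, so no further obstacle is expected; in the supersonic case this projection machinery breaks down and a different analysis, outside the scope of this theorem, would be required.
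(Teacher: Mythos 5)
Your proposal is correct and follows essentially the same route as the paper, which simply states that the proof is identical to that of Theorem \ref{thm:adv1d:err} once the coupled projection of Lemma \ref{lemma:proja} is in hand: the projection kills both the volume and interface contributions of the projection error, the anti-symmetric stabilization makes the spatial form vanish when tested with the error itself, and Cauchy--Schwarz plus integration in time and the triangle inequality finish the argument. The only cosmetic discrepancy is that you (correctly) use $\alpha_{j-\frac12}=\tfrac12\sqrt{1-u_0^2/c_0^2}$ as in Lemma \ref{lemma:proja}, whereas the theorem statement drops the factor $\tfrac12$, which appears to be a typo in the paper.
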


\subsubsection{Supersonic case $(u_0>c_0)$}
\label{subsec:sup}
In this case, the eigenvalues of the matrix $\bld B_1$ are all positive, the construction of a local 
projection $P_h^\star$ in the previous section is no longer valid. 
Hence, we suggest the doubling the unknowns approach, c.f. Remark \ref{rk:sys}, to obtain an 
optimal convergent, energy-conserving semi-discrete DG scheme on general nonuniform meshes.

However, if we insist in working with the original system and 
use scheme \eqref{scheme:acoustics1d}, we can take the stabilization 
parameter $\alpha =\frac12 \sqrt{\frac{u_0^2}{c_0^2}-1}$.
The resulting method can be proven to be optimally convergent on {\it uniform} meshes
for all polynomial degree, but only suboptimal convergent on {\it nonuniform} meshes.
The optimal convergence of this method for all polynomial degree is numerically verified, and 
a loss of convergence order is also numerically observed on nonuniform randomly perturbed meshes.
These numerical tests are not reported in the paper to save space.

The corresponding error analysis is also more involved, which 
follows from similar arguments as in \cite{Bona13, ChenCockburnDong16}.
Without further going into details, 
% we claim that, following the similar arguments as in \cite{Bona13, ChenCockburnDong16}, 
we claim that we can prove
% , following the arguments in \cite{Bona13, ChenCockburnDong16}, 
the projection \eqref{proj-acoustics} is a well-defined {\it global} 
projection for all polynomial degree $k \ge 0$, 
in the supersonic case ($u_0>c_0$), 
which has the approximation property $\|P_h^\star\bld u-\bld u\|_{L^2(I)}\le Ch^{\widetilde k}$, 
where 
$\widetilde k = k+1$ on uniform meshes, and $\widetilde k = k$ on general nonuniform meshes.
% Details of the derivation is left out as an exercise. 
We specifically remark that the global projections
defined in \cite{Bona13, ChenCockburnDong16} require the polynomial degree to be even, 
otherwise is not well-defined.
But due to the coupling term \eqref{proj-ac2}, we do not have this polynomial degree restriction for 
well-possesses of the projection \eqref{proj-acoustics}.
In particular, we do obtain optimal convergence on uniform meshes for {\it any } polynomial degree.

\subsection{Optimal energy-conserving DG methods for linear symmetric hyperbolic systems}
\label{sub:syso}
Now, we turn back to the general, $m$-component,  linear symmetric hyperbolic systems \eqref{1d-symmetric-sys}
with a diagonal, piecewise constant, positive matrix $\bld B_0\in\mathbb{R}^{m\times m}$, and 
a {symmetric constant} matrix $\bld B_1\in\mathbb{R}^{m\times m}$. 

We shall consider the eigenvalue decomposition of $\bld B_1$.
Without loss of generality, we assume that the number of positive eigenvalues for $\bld B_1$
is always greater than or equal to the number of its negative eigenvalues.
Hence, we assume that $\bld B_1$ has $r+s$ positive eigenvalues $\{\lambda_{i}^+\}_{i=1}^{r+s}$, 
and $s$ negative eigenvalues, $\{\lambda_{r+s+1-i}^-\}_{i=1}^{s}$, 
with non-negative integers $r$ and  $s$ satisfying $r+2s\le m$. These {\it nonzero} eigenvalues are ordered such that 
\[
 \lambda_1^+\ge  \lambda_2^+\ge\cdots\lambda_{r+s}^+>0>\lambda_{r+s}^-\ge\cdots\ge\lambda_{r+1}^-.
\]
We denote the diagonal eigenvalue matrix of $\bld B_1$ as 
\begin{subequations}
\label{eig-B}
\begin{align}
\label{eig-B1}
\Lambda = \mathrm{diag}([\lambda_1^+,\cdots,\lambda_{r+s}^+,\underbrace{0,\cdots,0}_{m-r-2s \text{ zeros}},\lambda_{r+s}^-,\cdots,\lambda_{r+1}^-])
\in \mathbb{R}^{m\times m},
\end{align}
and the corresponds orthogonal eigenvalue decomposition 
\begin{align}
\label{eig-B2}
 \bld B_1 = \bld S\Lambda \bld S^{-1},\quad\quad \bld S\in \mathbb{R}^{m\times m} \text{ is orthogonal
 with determinant $1$}.
\end{align}
\end{subequations}
We denote the characteristic variable $\bld w=(w_1,\cdots, w_m)=\bld S^{-1}\bld u$, 
so the characteristic component $w_i$ has wave speed $\Lambda(i, i)$.

Based on the discussion in the previous two subsection,  for each positive integer $\mu\le s$, 
we shall pair the characteristic variables 
$w_{r+\mu}$, with wave speed $\lambda_{r+\mu}^+$, and $w_{m+1-\mu}$,
with wave speed $\lambda_{r+\mu}^-$, and 
% follow the discussion in subsection \ref{sub:ac} to 
consider the optimal energy-conserving numerical flux \eqref{acoustics-flux-matrix} 
for the pair $(w_{r+\mu}, w_{m+1-\mu})$.
And for the remaining $r$ variables, we shall follow the discussion in subsection \ref{sub:ad}
to introduce auxiliary {\it zero} variables that travel with the negative speed 
$\lambda_{\mu}^-:=-\lambda_{\mu}^+$ for $1\le \mu\le r$.
To be more precise, we consider the following $m+r$ component, augmented system
for the variable $\widetilde{\bld u} = [\bld u;\bld \phi]$:
\begin{subequations}
\label{sys:aug}
\begin{align}
  \widetilde{\bld B_0}\,{\widetilde{\bld u}}_t +   \widetilde{\bld B_1} \widetilde{\bld u}_x & =0, &&\hspace{-2.8cm}  (x,t)\in I\times (0,T], 
\end{align}
with 
\begin{align}
 \widetilde{\bld B_0} = \left[\begin{tabular}{cc}
                               $\bld B_0$ & 0\\
                               0 & $\bld I_{r}$\\                              
                              \end{tabular}
\right], \quad \text{and} \;\;
 \widetilde{\bld B_1} = \left[\begin{tabular}{cc}
                               $\bld B_1$ & 0\\
                               0 & $\mathrm{diag}([\lambda_{r}^-,\cdots,\lambda_1^-])$\\                              
                              \end{tabular}
\right],
\end{align}
\end{subequations}
with initial condition 
$\widetilde{\bld u}(x,0) = [\bld u(x);0]$.
Here $\bld I_r$ is the $r\times r$ identity matrix and $\bld \phi$ has $r$ components.
Note that the augmented matrix $\widetilde{\bld B_1}$ has the following eigenvalue decomposition
\begin{subequations}
\label{eig-BB}
\begin{align}
\label{eig-Bt1}
\widetilde{\bld B_1}= \widetilde{\bld S}\widetilde{\Lambda}\widetilde{\bld S}^{-1},
\end{align}
with 
\begin{align}
\label{eig-Bt2}
\widetilde{\Lambda} = 
\mathrm{diag}([\lambda_1^+,\cdots,\lambda_{r+s}^+,\underbrace{0,\cdots,0}_{m-r-2s \text{ zeros}},\lambda_{r+s}^-,\cdots,\lambda_{1}^-]),
\end{align}
and
\begin{align}
\label{eig-Bt3}
\widetilde{S}= \left[\begin{tabular}{cc}
                               $\bld S$ & 0\\
                               0 & $\bld I_{r}$\\                              
                              \end{tabular}
\right].
\end{align}

To further simplify notation, for each positive integer $\mu\le r+s$ 
we denote the anti-symmetric $(m+r)\times (m+r)$ matrices $\widetilde{\bld R_\mu}$ that only has non-vanishing
components on the $(\mu, m+r-\mu)$ and $(m+r-\mu,\mu)$ locations, with 
\[
 \widetilde{\bld R_\mu}(\mu, m+r-\mu) =1,\quad \text{ and\;\; }
 \widetilde{\bld R_\mu}(m+r-\mu,\mu) =-1.
\]
\end{subequations}

Finally, we are ready to state our main result on the optimal energy conserving semi-discrete DG method
for the augmented system \eqref{sys:aug}. The proof is omitted since it directly follows from the 
discussion in the previous two subsection.
\begin{theorem}
\label{thm:sys1dX}
 Assume that the exact solution $\widetilde{\bld u}$ of \eqref{sys:aug} is sufficiently smooth. 
Let $\widetilde{\bld u}_h\in [V_h^k]^{m+r}$ be the numerical solution of the following semi-discrete DG scheme:
\begin{subequations}
\label{scheme:sys1dX}
\begin{align} 
 \label{scheme:sys1dt}
\intc{\widetilde{\bld B_0}(\widetilde{\bld u}_h)_t\cdot \widetilde{\bld v}_h} - 
\intc{\widetilde{\bld B_1}\,\widetilde{\bld u}_h\cdot (\widetilde{\bld v}_h)_x} \quad\quad&\nonumber\\
+\widehat{\widetilde{\bld B_1}\widetilde{\bld u}_h}\cdot \widetilde{\bld v}_h^-|_{j+\frac12} -
\widehat{\widetilde{\bld B_1}\widetilde{\bld u}_h} \cdot\widetilde{\bld v}_h^+|_{j-\frac12} &= \;0,
\end{align}
for all $\widetilde{\bld v}_h\in [V_h^k]^{m+r}$ and all $\jj.$, with the 
numerical flux
\begin{align}\label{flux:sys1dt}
\widehat{\widetilde{\bld B_1}\widetilde{\bld u}_h}|_{j-\frac12} = 
\widetilde{\bld B_1}\mean{\widetilde{\bld u}_h}
+ 
\frac12\sum_{\mu=1}^{r+s}\sqrt{|\lambda_\mu^+\lambda_\mu^-|}\,
\widetilde{\bld S}\widetilde{\bld R}_\mu
\widetilde{\bld S}^{-1}
\jump{\widetilde{\bld u}_h},
\end{align}
\end{subequations}
Then, the total energy 
\[
 \widetilde{E}(t) =\intid{(\widetilde{\bld B_0}\widetilde{\bld u}_h)\cdot\widetilde{\bld u}_h}
\]
is conserved for all time.
Moreover, for $T >0$ there holds the following
error estimate
\begin{equation} \label{thm:sys-est}
\normI {\widetilde{\bld u}(T) - \widetilde{\bld u}_h(T)} 
\le C (1+T) \ho,
\end{equation}
where $C$ is independent of $h$.
\end{theorem}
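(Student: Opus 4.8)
The plan is to reduce Theorem~\ref{thm:sys1dX} to the two building blocks already established: the energy identity of Theorem~\ref{thm:sys1d} and the local projection error estimates of Lemmas~\ref{lemma:proj} and \ref{lemma:proja}. The first step is energy conservation. I would observe that the augmented system \eqref{sys:aug} has the form \eqref{1d-symmetric-sys} with the symmetric matrix $\widetilde{\bld B_1}$ and the diagonal positive matrix $\widetilde{\bld B_0}$, and that the numerical flux \eqref{flux:sys1dt} is exactly of the form \eqref{flux:sys1d} with stabilization matrix $\widetilde{\bld R}_{j-\frac12} = \frac12\sum_{\mu=1}^{r+s}\sqrt{|\lambda_\mu^+\lambda_\mu^-|}\,\widetilde{\bld S}\widetilde{\bld R}_\mu\widetilde{\bld S}^{-1}$. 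Each $\widetilde{\bld R}_\mu$ is anti-symmetric by construction, and conjugation by the orthogonal matrix $\widetilde{\bld S}$ preserves anti-symmetry, so $\widetilde{\bld R}_{j-\frac12}$ is anti-symmetric. Theorem~\ref{thm:sys1d} then gives $\widetilde E(t)=\widetilde E(0)$ directly.

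For the error estimate I would follow the energy argument used in the proof of Theorem~\ref{thm:adv1d:err} verbatim, with the coupled projection replaced by the appropriate one. The key is to diagonalize: writing $\widetilde{\bld w} = \widetilde{\bld S}^{-1}\widetilde{\bld u}$, the augmented system decouples into $m+r$ scalar advection equations, whose nonzero wave speeds come in $r+s$ pairs $(\lambda_\mu^+,\lambda_\mu^-)$ (each pair treated as in the acoustics/advection subsections) plus $m-r-2s$ components with zero speed. Correspondingly I would build the coupled projection $\widetilde{P}_h^\star$ componentwise on $\widetilde{\bld w}$: for each nonzero pair use the acoustics-type projection of Lemma~\ref{lemma:proja} (which, by \eqref{pi1}--\eqref{pi2}, is a combination of $P_h^+$ and $P_h^-$ and hence optimal), for each zero-speed component use the standard $L^2$-projection, then transform back via $\widetilde{\bld S}$. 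Because $\widetilde{\bld S}$ and $\widetilde{\bld B_1}$ are constant matrices, this projection commutes with the change of variables, is well-defined and local, and satisfies $\|\widetilde{P}_h^\star\widetilde{\bld u}-\widetilde{\bld u}\|_{I_j}\le C\ho$. The defining property of $\widetilde{P}_h^\star$ is precisely what makes the projection of the flux error vanish, so splitting $\widetilde{\bld u}-\widetilde{\bld u}_h = (\widetilde{\bld u}-\widetilde{P}_h^\star\widetilde{\bld u}) + (\widetilde{P}_h^\star\widetilde{\bld u}-\widetilde{\bld u}_h)$ and testing with $\widetilde{\bld B_0}^{-1}$ times the discrete error component yields the energy identity
\[
\frac{d}{dt}\,\frac12\!\intid{\widetilde{\bld B_0}^{-1}\,\widetilde{\bld\varepsilon}\cdot\widetilde{\bld\varepsilon}} \,=\, \intid{\widetilde{\bld B_0}^{-1}\,\partial_t(\widetilde{\bld u}-\widetilde{P}_h^\star\widetilde{\bld u})\cdot\widetilde{\bld\varepsilon}},
\]
exactly as in Theorem~\ref{thm:adv1d:err}. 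Gronwall (or direct integration) together with Cauchy--Schwarz, the approximation estimate for $\widetilde{P}_h^\star$, and a triangle inequality gives \eqref{thm:sys-est}.

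The main obstacle is the zero-speed components: the acoustics projection of Lemma~\ref{lemma:proja} relies on pairing a positive and a negative eigenvalue, and the single energy-conserving choice for a scalar equation with zero speed has $\bld R\equiv 0$, i.e.\ the central flux. I would handle this by noting that the sign conditions $r+2s\le m$ and the ordering in \eqref{eig-B1} guarantee every nonzero eigenvalue is paired and the zero-speed block contributes no flux coupling at all; for the zero-speed scalar equation the standard $L^2$-projection makes the volume term $\int_{I_j}\!c\,u_h(v_h)_x\,\mathrm{dx}$ (with $c=0$) trivially zero and the consistency error is the $L^2$-projection error, so optimality survives without any Gauss--Radau structure. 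Thus the proof needs only a careful bookkeeping of the block decomposition; no genuinely new analytic estimate beyond what is already in the excerpt is required, which is why the authors state that the proof is omitted.
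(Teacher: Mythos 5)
Your proposal matches the paper's (omitted) proof in all essentials: energy conservation via the anti-symmetry criterion of Theorem \ref{thm:sys1d} (anti-symmetry being preserved under conjugation by the orthogonal $\widetilde{\bld S}$), and the error estimate via the characteristic decomposition, the paired local projections of Lemmas \ref{lemma:proj} and \ref{lemma:proja} on the $(\lambda_\mu^+,\lambda_\mu^-)$ pairs, the plain $L^2$-projection on the zero-speed block, and the energy argument of Theorem \ref{thm:adv1d:err}. One small correction: since the scheme \eqref{scheme:sys1dt} already carries the weight $\widetilde{\bld B_0}$ on the time-derivative term, the test function should be the discrete error $\widetilde{\bld\varepsilon}$ itself rather than $\widetilde{\bld B_0}^{-1}\widetilde{\bld\varepsilon}$; this yields the $\widetilde{\bld B_0}$-weighted energy $\int_I \widetilde{\bld B_0}\widetilde{\bld\varepsilon}\cdot\widetilde{\bld\varepsilon}\,\mathrm{dx}$ and, because $\widetilde{\bld B_1}$ is symmetric, keeps the volume term in the form $\widetilde{\bld B_1}\widetilde{\bld\varepsilon}\cdot\widetilde{\bld\varepsilon}_x=\frac12(\widetilde{\bld B_1}\widetilde{\bld\varepsilon}\cdot\widetilde{\bld\varepsilon})_x$ needed for the interface cancellation (with $\widetilde{\bld B_0}^{-1}$ the product $\widetilde{\bld B_0}^{-1}\widetilde{\bld B_1}$ need not be symmetric and $\widetilde{\bld B_0}$ may jump across cell interfaces).
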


% \begin{remark}[Implementation]
% The numerical flux \eqref{flux:sys1dt}
% can be obtained efficiently by working with the characteristic
%  variable 
% $\widetilde{\bld w}_h= [w_{1,h},\cdots, w_{m+r,h}]' = \widetilde{\bld S}^{-1}{\widetilde{\bld u}}_h
% $.
% Define the characteristic numerical flux $\widehat{\bld \Lambda\widetilde{\bld w}_h}$, 
% whose $i$-th component equals 
% $\lambda_i^+\mean{w_{i,h}} +\frac12\alpha_i\jump{w_{m+r+1-i,h}}$, and 
% $m+r+1-i$-th component equals 
% $\lambda_{i}^-\mean{w_{m+r+1-i,h}} -\frac12\alpha_i\jump{w_{i,h}}$, 
% for $1\le i \le r+s$, where $\alpha_i = \sqrt{|\lambda_i^+\lambda_i^-|}$, and those
% other components equals zero, i.e.,
% \begin{align*}
%  \widehat{\bld \Lambda\widetilde{\bld w}_h}
%  := \left[
%  \begin{tabular}{c}
%   $\lambda_1^+\mean{w_{1,h}} +\frac12\alpha_1\jump{w_{m+r,h}} $ \\[.5ex]
%   $\vdots$\\
%   $\lambda_{r+s}^+\mean{w_{r+s,h}} +\frac12\alpha_{r+s}\jump{w_{m+r+1-(r+s),h}} $ \\[.5ex]
%   $0$\\
%   $ \vdots$\\
%   $0$\\
%   $\lambda_{r+s}^-\mean{w_{m+r+1-(r+s),h}} -\frac12\alpha_{r+s}\jump{w_{r+s,h}} $ \\[.5ex]
%   $\vdots$\\
%   $\lambda_1^-\mean{w_{m+r,h}} -\frac12\alpha_1\jump{w_{1,h}} $ \\
%  \end{tabular}
%  \right].
% \end{align*}
% The numerical flux \eqref{flux:sys1dt} is then obtained by transform back to the 
% primative variable
% \[
% \widehat{\widetilde{\bld B_1}\widetilde{\bld u}_h}|_{j-\frac12} = 
% \widetilde{\bld S}\,\widehat{\bld \Lambda\widetilde{\bld w}_h}
% \]
% 
% \end{remark}

% for the following enlarged system
% Now, the idea is that
\begin{remark}[Doubling the unknowns]
If we simply double the unknowns,  
% and consider the coupled system \eqref{1d-symmetric-sys} and \eqref{aux-1d},
the scheme \eqref{scheme:sys1dX} applied to 
the resulting coupled system \eqref{1d-symmetric-sys} and \eqref{aux-1d} is slightly different from the scheme 
\eqref{scheme:sysA} introduced in Remark \ref{rk:sys}, with 
the only difference being
the numerical flux \eqref{flux-sysA}
replaced by the following characteristic-wise one:
\begin{align}
 \label{fluxAA}
  \widehat{\bld B_1\bld u}_h|_{j-\frac12} = 
\bld B_1\mean{\bld u_h}
+\frac12 |\bld B_1|\jump{\bld \phi_h},
\quad
 \widehat{\bld B_1\bld \phi}_h|_{j-\frac12} = 
\bld B_1\mean{\bld \phi_h}
+\frac12 |\bld B_1|\jump{\bld u_h},
\end{align}
where $|\bld B_1| = \bld S|\bld{\Lambda}|\bld S^{-1}$. 
Although this flux is slightly more expensive than the component-wise flux \eqref{flux-sysA} with both 
methods optimally convergent, 
the extension of the flux \eqref{fluxAA} to multi-dimensions 
on unstructured meshes is more promising than that for \eqref{flux-sysA}.
See also Remark \ref{rk:sys2d} below.
\end{remark}

\subsection{High-order energy-conserving Lax-Wendroff time discretization}
\label{sec:time}
In this section, we consider the temporal discretization
of the semi-discrete scheme \eqref{scheme:sys1dX}.
We introduce an 
explicit, high-order, energy-conserving Lax-Wendroff time integrator.

\newcommand{\wu}{\widetilde{\bld u}}
\newcommand{\wv}{\widetilde{\bld v}}
To simplify notation, we denote 
\begin{subequations}
\label{bilinear-forms}
\begin{align}
 M_h({\wu_h}, \wv_h):=&\;
 \sum_{j=1}^N\intc{\widetilde{\bld B_0}\,\widetilde{\bld u}_h\cdot \widetilde{\bld v}_h} \\
 B_h({\wu_h}, \wv_h):=&\; \sum_{j=1}^N\left(\intc{\widetilde{\bld B_1}\,\widetilde{\bld u}_h\cdot 
 (\widetilde{\bld v}_h)_x}
+\left.\widehat{\widetilde{\bld B_1}\widetilde{\bld u}_h} 
\cdot\jump{\wv_h}\right|_{j-\frac12} \right).
\end{align}
\end{subequations}
The semi-discrete scheme \eqref{scheme:sys1dX} is to
find $\wu_h(t)\in[V_h^k]^{m+d}$ such that
\begin{align}
 M_h(({\wu_h})_t, \wv_h) = 
  B_h({\wu_h}, \wv_h), \quad \forall\; \wv_h\in [V_h^k]^{m+d}.
\end{align}
Introducing a set of basis, e.g. orthogonal Legendre polynomials, 
for the DG space $V_h^k$, and denoting $[{\bld u}_h(t)]$ as the vector of 
degrees of freedom for $\widetilde{\bld u}_h(t)$, the above
semi-discrete scheme can be expressed as the following matrix-vector form:
\begin{align}
 \label{discrete-form}
[{\bld u}_h]_t =\bld M^{-1}\bld  A [\bld u_h],
\end{align}
where $\bld M$ is the ($\widetilde{\bld B_0}$-weighted) mass matrix, which is diagonal
if one choose the Legendre basis,
and $\bld A$ is the matrix corresponding to the spatial operator $B_h(\cdot,\cdot)$.
% Energy conservation proper of the scheme \eqref{scheme:sys1dX} ensure that the matrix $A$ is anti-symmetric.
A reformulation of the energy conservation property of the scheme \eqref{scheme:sys1dX}
in Theorem \ref{thm:sys1dX} in this 
matrix-vector notation is given below:
\begin{subequations}
\label{energy}
\begin{align}
\label{energy-1}
 (\bld M[{\bld u}_h(t)])\cdot [{\bld u}_h(t)]
 =&\;(\bld M[{\bld u}_h(0)])\cdot [{\bld u}_h(0)],\quad\quad \forall t>0,\\
\label{energy-2}
 \bld A \text{ is anti-symmetric}.
\end{align}
\end{subequations}

Now, we consider a class of Lax-Wendroff time discretization 
for the semi-discrete scheme \eqref{discrete-form}
that preserve a discrete version of the energy conservation identity \eqref{energy-1}.
The Lax-Wendroff time discretization \cite{LaxWendroff60} is a 
high-order method known as the Cauchy-Kowalewski type procedure
in the literature, which relies on converting
each time derivative in a truncated temporal Taylor expansion (with expected accuracy) 
of the solution into spatial derivatives by repeatedly using the 
underlying differential equation and its differentiated form. 
We directly work with the semi-discrete scheme \eqref{discrete-form} without going back to the 
 PDE \eqref{sys:aug}.

% Assuming $u: [0,T]\rightarrow \mathbb{R}$ is a sufficiently smooth function, by Taylor approximation, we have
% \begin{align}
%  \label{taylor}
% {u(t+\Delta t)- u(t-\Delta t)} = \sum_{i=0}^r \frac{2\Delta t^{2i+1}}{(2i+1)!} u^{(2i+1)}(t)+\mathcal{O}(\Delta t^{2r+2}).
% \end{align}
% The high-order time discretization of \eqref{discrete-form} is obtained using the above equation, and 
% repeatedly replacing the time derivative of $[\bld u_h]$ using the identity \eqref{discrete-form}. 
% 
Let $0=t_0< t_1<\cdots<t_N=T$ be a partition of the interval $[0,T]$ with time step $\Delta t = t_{n+1}-t_n$. Here uniform
time step $\Delta t$ is used. 
For, any non-negative integer $r$, a 
temporal ($2r+1$)-th stage, 
($2r+2$)-th order accurate fully discrete approximation $[\bld u_h^n]$ for \eqref{discrete-form}
are construction as follows: for $n=1,\cdots, N-1$, $[\bld u_h^{n+1}]$ is given by 
\begin{align}
 \label{fully-discrete}
 [\bld u_h^{n+1}]- [\bld u_h^{n-1}] = \sum_{i=0}^r \frac{2\Delta t^{2i+1}}{(2i+1)!} 
 (\bld M^{-1}\bld A)^{2i+1}[\bld u_h^n].
\end{align}
We specifically mention that 
the above time discretization is obtained by the following Taylor approximation and the 
Lax-Wendroff procedure of converting the time derivatives  into the discrete 
spatial operators using \eqref{discrete-form}, 
\begin{align*}
%  \label{taylor}
{u(t+\Delta t)- u(t-\Delta t)} = \sum_{i=0}^r \frac{2\Delta t^{2i+1}}{(2i+1)!} u^{(2i+1)}(t)+\mathcal{O}(\Delta t^{2r+2}).
\end{align*}
% Note that here we need two initialization stages $[\bld u_h^0]$, and $[\bld u_h^1]$.
Note that for $r=0$, we get the usual second-order accurate leap-frog method
\[
 [\bld u_h^{n+1}]- [\bld u_h^{n-1}] = {2\Delta t}\, \bld M^{-1}\bld A[\bld u_h^n].
\]

% Optimal $L^2$-convergence $\mathcal{O}({h^{k+1}+\Delta t^{2r+2}})$ of 
% this fully discrete scheme can be proven using the semi-discrete convergence result in Theorem 
% \ref{thm:sys1dX} with
% the standard elliptic projector approach \cite{Wheeler75}; we omit the details.
The energy conservation property of the fully discrete scheme is documented in the next theorem.
\begin{theorem}
 The fully discrete scheme \eqref{fully-discrete} satisfies the energy identity
 \[
   (\bld M[\bld u_h^{n+1}])\cdot[\bld u_h^n] =
      (\bld M[\bld u_h^{n}])\cdot[\bld u_h^{n-1}]
 \]
\end{theorem}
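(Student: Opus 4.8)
The plan is to reduce the asserted two-level energy identity to the anti-symmetry of the odd powers $\bld M(\bld M^{-1}\bld A)^{2i+1}$ that enter the update \eqref{fully-discrete}. Throughout I use the two structural facts at hand: the matrix $\bld A$ is anti-symmetric (this is \eqref{energy-2}), and the $\widetilde{\bld B_0}$-weighted mass matrix $\bld M$ is symmetric — indeed diagonal in the Legendre basis, with positive diagonal since $\widetilde{\bld B_0}$ is diagonal with positive diagonal.

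First I would rewrite the quantity whose vanishing is to be shown. Using $\bld M^T=\bld M$ on the second term,
\[
(\bld M[\bld u_h^{n+1}])\cdot[\bld u_h^n]-(\bld M[\bld u_h^{n}])\cdot[\bld u_h^{n-1}]
=\big(\bld M([\bld u_h^{n+1}]-[\bld u_h^{n-1}])\big)\cdot[\bld u_h^n].
\]
Substituting the fully discrete scheme \eqref{fully-discrete} into the right-hand side turns it into
\[
\sum_{i=0}^r\frac{2\,\Delta t^{2i+1}}{(2i+1)!}\,
\big(\bld M(\bld M^{-1}\bld A)^{2i+1}[\bld u_h^n]\big)\cdot[\bld u_h^n],
\]
so it suffices to prove that $\big(\bld M(\bld M^{-1}\bld A)^{2i+1}\bld w\big)\cdot\bld w=0$ for every vector $\bld w$ and every $i\ge 0$.

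The core step is a short algebraic identity about $\bld N_k:=\bld M(\bld M^{-1}\bld A)^{k}$ for integers $k\ge 1$. Telescoping the product gives $\bld N_k=\bld A(\bld M^{-1}\bld A)^{k-1}$, i.e. an alternating product of $k$ copies of $\bld A$ and $k-1$ copies of $\bld M^{-1}$ that begins and ends with $\bld A$; transposing, reversing the order, and using $\bld A^{T}=-\bld A$ together with the symmetry of $\bld M^{-1}$ produces the very same alternating product with each $\bld A$ replaced by $-\bld A$, whence $\bld N_k^{T}=(-1)^k\bld N_k$. Thus $\bld N_k$ is symmetric for even $k$ and anti-symmetric for odd $k$; in particular every $\bld M(\bld M^{-1}\bld A)^{2i+1}=\bld N_{2i+1}$ is anti-symmetric. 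Since $\bld w^{T}\bld N\bld w=0$ for any anti-symmetric $\bld N$ and any $\bld w$, each summand above vanishes, which is exactly the stated identity.

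I do not expect a genuine obstacle; the one delicate point is the transpose computation for $\bld N_k$, which is precisely where the symmetry of $\bld M$ and the anti-symmetry of $\bld A$ are used jointly, and where the parity of $k$ — hence the fact that only the odd powers $2i+1$ occur in the Lax--Wendroff stages \eqref{fully-discrete} — is essential. It is worth flagging that the conserved object is the two-level bilinear form $\bld M[\bld u_h^{n+1}]\cdot[\bld u_h^n]$ rather than the one-level energy $\bld M[\bld u_h^n]\cdot[\bld u_h^n]$; under a standard CFL-type restriction the two are equivalent, and one then deduces uniform-in-time $L^2$ stability, but that refinement lies beyond the present statement.
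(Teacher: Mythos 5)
Your proof is correct and follows essentially the same route as the paper: dot the update \eqref{fully-discrete} with $\bld M[\bld u_h^n]$ and invoke anti-symmetry. The paper's one-line argument leaves implicit the fact that $\bld M(\bld M^{-1}\bld A)^{2i+1}$ is anti-symmetric (which requires the symmetry of $\bld M$ in addition to \eqref{energy-2}); your computation of $\bld N_k^{T}=(-1)^k\bld N_k$ supplies exactly that missing detail.
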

\begin{proof}
The equality is obtained by dotting the equation \eqref{fully-discrete} 
with $\bld M [\bld u_h^n]$, and taking into account the anti-symmetry of 
the matrix $\bld A$, \eqref{energy-2}.
\end{proof}

\begin{remark}[Runge-Kutta type time discretization]
Recall that the $r$-stage $r$-th order accurate explicit Runge-Kutta method 
for \eqref{discrete-form} can be 
write as the following Lax-Wendroff form, cf. \cite{SunShu17},
\begin{align}
 \label{rk}
  [\bld u_h^{n+1}]- [\bld u_h^{n}] = \sum_{i=1}^{r} \frac{\Delta t^{i}}{(i)!} 
 (\bld M^{-1}\bld A)^{i}[\bld u_h^n].
\end{align}
This time discretization is not energy-conserving.
% , but it enjoys a better stability property.
\end{remark}

\begin{remark}[Time-dependent source term]
\label{rk-source}
 The above time discretization \eqref{fully-discrete} and \eqref{rk} can be easily modified to 
treat a {\it linear} time-dependent source term without sacrificing its formal order of accuracy.
In particular, consider the follow system of ODEs:
\begin{align}
\label{source}
[{\bld u}_h]_t =\bld M^{-1}\Big({\bld  A} [\bld u_h] + [\bld f(t)]\Big), 
\end{align}
with $\bld f(t)$ takes into account possible {\it linear} 
boundary/volume source terms.
The energy-conserving Lax-Wendroff method then reads
\begin{subequations}
\label{source-t}
  \begin{align}
 \label{fully-discrete-s}
 [\bld u_h^{n+1}]- [\bld u_h^{n-1}] = \sum_{i=0}^r \left(\frac{2\Delta t^{2i+1}}{(2i+1)!} 
 [d^{2i+1}\bld u_h^n]\right),
\end{align}
where $[d^{0}\bld u_h] = [\bld u_h]$, and $[d^{s}\bld u_h]$, $s\ge 1$, 
is recursively defined through the following map:
\begin{align}
 \label{map-s}
 [d^{s}\bld u_h^n] =  &\;
 \bld M^{-1}\Big({\bld  A} [d^{s-1}\bld u_h] + [\bld f^{(s-1)}(t)]\Big)\quad s\ge 1,
\end{align}
with $\bld f^{(s-1)}(t)$ being the $(s-1)$-th derivative of $\bld f(t)$.
And the Runge-Kutta type Lax-Wendroff method reads
  \begin{align}
 \label{rk-s}
 [\bld u_h^{n+1}]- [\bld u_h^{n}] = \sum_{i=0}^r \left(\frac{\Delta t^{i+1}}{(i+1)!} 
 [d^{i+1}\bld u_h^n]\right),
\end{align}
\end{subequations}
We specifically mention the Lax-Wendroff method \eqref{rk-s}
is different from the classical Runge-Kutta method for the time-dependent source term treatment.
The Runge-Kutta method is well-known to suffer from the so-called order reduction when 
boundary source term were not properly adjusted, c.f. \cite{rkbd}.
But the Lax-Wendroff methods \eqref{fully-discrete-s} and 
\eqref{rk-s} do not suffer from such order reduction since all 
spatial derivatives are calculated on the same time level.
\end{remark}

\begin{remark}[Lax-Wendroff time discretization for nonlinear equations]
We shall point out that the Lax-Wendroff method is considerably 
more complex to derive for nonlinear equations; see \cite{LaxWendroff60,GuoQiuQiu15}, 
as one would need to take into account the time derivative of the matrix $\bld A$, 
that depends on the solution $\bld u_h$.
In this case, instead of the current method of lines approach (first spatial DG discretization, then 
temporal Lax-Wendroff discretization), 
we shall first discretize the PDE in time then 
apply a proper spatial DG discretization, which 
takes into account  higher order derivatives.
\end{remark}

% \begin{remark}[Stability of the fully-discrete scheme]
% Stability analysis of the fully discrete schemes 
% \eqref{rk} was recently for $r\le 4$ 
% \end{remark}

\subsection{Boundary treatment}
\label{sec:bdry}
For boundary value problems, special care need to 
be taken for the numerical fluxes at the boundary.
Here we discuss how to impose the inflow boundary conditions.

Consider the linear symmetric hyperbolic system \eqref{sys:aug}, 
where we suppressed the tilde notation for ease of presentation,
with initial condition 
$\bld u(x,0) = \bld u_0$, and {\it inflow} boundary condition 
\begin{subequations}
\begin{align}
\label{bdry-cd}
  \bld B_1^+\bld u(a, t) = \bld B_1^+\bld u_a(t),\quad \quad
  \bld B_1^-\bld u(b, t) = \bld B_1^-\bld u_b(t),
\end{align}
where
\begin{align}
 \bld B_1^+ = &\;\bld S\,\mathrm{diag}([\max(\lambda_1,0), \cdots,
 \max(\lambda_{m+r},0)])\bld S^{-1},\\
 \bld B_1^- = &\;\bld S\,\mathrm{diag}([\min(\lambda_1,0), \cdots,
 \min(\lambda_{m+r},0)])\bld S^{-1},
\end{align}
\end{subequations}
and $\bld B_1 = \bld S\,\mathrm{diag}([\lambda_1,\cdots,\lambda_{m+r}])\bld S^{-1}$ 
is an eigenvalue decomposition of $\bld B_1$.
We denote $|\bld B_1|:= \bld B_1^+- \bld B_1^-$.
We further assume that all eigenvalues of $\bld B_1$ are non-zero.
The PDE \eqref{sys:aug} (ignoring the tilde notation) with the boundary condition has the following 
energy identity:
\begin{align}
\label{energy-id2}
 \frac{d}{dt}\left(\frac12\intid{\bld B_0\bld u(t)\cdot \bld u(t)}\right)
 -\frac12\bld B_1^-\bld u(a,t)\cdot\bld u(a,t)^2 
 +\frac12\bld B_1^+\bld u(b,t)\cdot\bld u(b,t)^2 &\nonumber\\
 =
 \frac12\bld B_1^+\bld u_a(t)\cdot\bld u_a(t) 
 -\frac12\bld B_1^-\bld u_b(t)\cdot \bld u_b(t).&
\end{align}

% \subsubsection{Upwinding numerical flux on the boundary}
On the two end points of the interval $I$, we simply take the following 
upwinding numerical flux:
\begin{subequations}
 \label{bdry}
 \begin{align}
\widehat{\bld B_1 \bld u_h}|_{x=a} = 
\bld B_1^+\bld u_a + \bld B_1^-\bld u_h^+|_{x=a},\\
\widehat{\bld B_1 \bld u_h}|_{x=b} = 
\bld B_1^-\bld u_b + \bld B_1^+\bld u_h^+|_{x=b}.
 \end{align}
\end{subequations}

The resulting semi-discrete scheme enjoys a similar energy identity as \eqref{energy-id2}
and is optimal convergent. 
The proof is similar to the periodic case \eqref{thm:sys1dX}, and is omitted for 
simplicity.
% In particular, we  construct 
% a local projection 
% The semi-discrete scheme \eqref{scheme:sys1dX} for the initial boundary value problem
% \eqref{sys:aug} with
% boundary condition \eqref{bdry-cd},  using 
% the upwinding boundary numerical flux \eqref{bdry} enjoys the following energy identity.
\begin{theorem}
 Assume that the exact solution $\widetilde{\bld u}$ of \eqref{sys:aug} with boundary 
 condition \eqref{bdry-cd}
 is sufficiently smooth. 
Let $\widetilde{\bld u}_h\in [V_h^k]^{m+r}$ be the numerical solution of 
\eqref{scheme:sys1dX} with internal (energy-conserving) numerical flux \eqref{flux:sys1dt},
and
boundary (upwinding) numerical flux \eqref{bdry}.
Then, the following energy identity holds
\begin{align}
  \frac{d}{dt}\left(\frac12\intid{\bld B_0\bld u_h(t)\cdot \bld u_h(t)}\right)
 +\left.\frac12|\bld B_1|\bld u_h^+\cdot\bld u_h^+\right|_{x=a} 
 +\left.\frac12|\bld B_1|\bld u_h^-\cdot\bld u_h^-\right|_{x=b} &\nonumber\\
 =
\left.\bld B_1^-\bld u_a(t)\cdot\bld u_h^+\right|_{x=a} 
\left. -\bld B_1^-\bld u_b(t)\cdot \bld u_h^-\right|_{x=b}.&
\end{align}
Moreover, for $T >0$ there holds the following
error estimate
\begin{equation} \label{thm:sys-est-bd}
\normI {{\bld u}(T) - {\bld u}_h(T)} 
\le C (1+T) \ho,
\end{equation}
where $C$ is independent of $h$.
\end{theorem}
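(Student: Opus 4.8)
The plan is to follow the periodic argument behind Theorem~\ref{thm:sys1dX} essentially line by line, the only new ingredient being the two inflow endpoints, where the flux is the upwinding flux \eqref{bdry} rather than the energy-conserving flux \eqref{flux:sys1dt} (I keep the notation of the surrounding discussion, with tildes suppressed; $\bld B_0$ is then a diagonal, piecewise-constant, positive matrix, hence uniformly bounded above and below). For the energy identity, test the scheme \eqref{scheme:sys1dX}, with the endpoint fluxes replaced by \eqref{bdry}, against $\bld v_h = \bld u_h$. On every interior face the stabilization part of \eqref{flux:sys1dt} is anti-symmetric, so exactly as in the proof of Theorem~\ref{thm:sys1dX} those faces leave only the telescoping contribution of $\sum_j \intc{\bld B_1 \bld u_h \cdot (\bld u_h)_x} = \tfrac12\sum_j\intc{(\bld B_1\bld u_h\cdot\bld u_h)_x}$ (using that $\bld B_1$ is symmetric and constant), which cancels down to $\tfrac12\bld B_1\bld u_h\cdot\bld u_h$ evaluated at the two endpoint traces. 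At the two endpoints I substitute \eqref{bdry} and use $\bld B_1 = \bld B_1^+ + \bld B_1^-$ and $|\bld B_1| = \bld B_1^+ - \bld B_1^-$: the terms quadratic in $\bld u_h$ collect into the $\tfrac12|\bld B_1|$ boundary terms on the left-hand side (with the correct sign, $|\bld B_1|$ being positive semidefinite), and the terms linear in $\bld u_h$ are precisely the data cross-terms on the right-hand side. This is a short algebraic computation and yields the stated identity.

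For the error estimate I would run the standard energy argument with a coupled Gauss--Radau-type projection $P_h^\star$. On all cells except the first and last, take the characteristic-wise coupled projection of Lemma~\ref{lemma:proja}, applied separately in each of the $r+s$ two-dimensional characteristic-pair subspaces appearing in \eqref{flux:sys1dt} (and the plain $L^2$ projection on the zero-speed characteristic components, if any); it is $O(\ho)$ and its defining face relation \eqref{proj-ac2} is precisely what annihilates the projection-error contribution on interior faces. Near the end cells the flux is upwinding, so I pass to the characteristic variables $\bld w = \bld S^{-1}\bld u$, in which the scheme and \eqref{bdry} decouple into scalar problems; for each scalar characteristic I keep the one-sided Gauss--Radau condition \eqref{proj1d} matched at that characteristic's upwind cell endpoint, and couple a positive- and a negative-speed scalar across each pair in the interior exactly as in Lemma~\ref{lemma:proja}. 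The resulting $P_h^\star$ is a well-posed local square system, is $O(\ho)$ in $L^2$ (and, for smooth data, pointwise) on every cell, and has error $L^2$-orthogonal to $P^{k-1}(I_j)$ on each cell.

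Then I split $\bld u - \bld u_h = (\bld u - P_h^\star\bld u) + (P_h^\star\bld u - \bld u_h) =: \bld\eta + \bld\xi$, take $\bld u_h(\cdot,0) = P_h^\star\bld u(\cdot,0)$ so that $\bld\xi(0) = 0$, and note that \eqref{bdry} is consistent (at the exact solution $\bld B_1^+\bld u(a,\cdot) = \bld B_1^+\bld u_a$, and likewise at $b$, by \eqref{bdry-cd}), so the data cancels from the error equation. Testing the error equation with $\bld v_h = \bld\xi$ (the weight $\bld B_0$ already sitting in the mass term of \eqref{scheme:sys1dX}): all interior-face terms involving $\bld\eta$ vanish by construction of $P_h^\star$, the $\bld B_1$-volume term involving $\bld\eta$ vanishes since $\bld B_1(\bld\xi)_x$ is a degree-$(k-1)$ polynomial to which $\bld\eta$ is $L^2$-orthogonal on each cell, and the endpoint terms quadratic in $\bld\xi$ assemble into the nonnegative boundary dissipation $\tfrac12|\bld B_1|\bld\xi^+\cdot\bld\xi^+|_{x=a} + \tfrac12|\bld B_1|\bld\xi^-\cdot\bld\xi^-|_{x=b}$. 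The only leftover contributions are $\intid{\bld B_0\bld\eta_t\cdot\bld\xi}$ and endpoint cross-terms of the form $\bld B_1^-\bld\eta^+\cdot\bld\xi^+|_{x=a}$ and $\bld B_1^+\bld\eta^-\cdot\bld\xi^-|_{x=b}$ — the outgoing-characteristic part of $\bld\eta$, which the Gauss--Radau conditions on the end cells do not annihilate — and these are absorbed into the boundary dissipation by Young's inequality (on the outgoing subspace $|\bld B_1|$ is positive definite), at the cost of an additive $C h^{2k+2}$ coming from the pointwise $O(\ho)$ bound on $\bld\eta$. Hence $\tfrac12\tfrac{d}{dt}\intid{\bld B_0\bld\xi\cdot\bld\xi} \le C\ho\,\normI{\bld\xi} + C h^{2k+2}$; since $\bld B_0$ is bounded above and below, $P_h^\star$ commutes with $\partial_t$, and $\normI{\bld\eta_t} \le C\ho$ by smoothness and \eqref{approx-proj-ac} applied to $\bld u_t$, and $\bld\xi(0) = 0$, a Gr\"onwall-type argument gives $\normI{\bld\xi(T)} \le C(1+T)\ho$; a triangle inequality with $\normI{\bld\eta(T)} \le C\ho$ then yields \eqref{thm:sys-est-bd}.

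The main obstacle is the boundary adaptation of the projection together with the control of the leftover boundary cross-terms: one must verify that, with the incoming and outgoing characteristics treated asymmetrically by the upwinding flux, the modified conditions on the first and last cells still form a well-posed $O(\ho)$ square system compatible with the interior pair-coupling of \eqref{proj-acoustics}, and that the residual boundary contribution is genuinely absorbable by the boundary dissipation. Diagonalizing into characteristic variables is what makes this tractable, reducing everything to the classical scalar left/right Gauss--Radau projections \eqref{proj1d}; once that reduction is in place, the rest runs exactly as in the proofs of Theorems~\ref{thm:adv1d:err} and \ref{thm:sys1dX}.
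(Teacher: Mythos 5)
Your proposal is correct and follows exactly the route the paper intends: the paper omits this proof, stating only that it is ``similar to the periodic case,'' and your argument is precisely that — the anti-symmetry/telescoping energy identity with the upwind endpoint fluxes split via $\bld B_1=\bld B_1^++\bld B_1^-$, $|\bld B_1|=\bld B_1^+-\bld B_1^-$, followed by the standard energy estimate with the characteristic-wise coupled Gauss--Radau projection of Lemma \ref{lemma:proja}, modified on the two end cells and with the outgoing-characteristic cross-terms absorbed into the boundary dissipation by Young's inequality. The only detail worth spelling out further is the unique solvability of the modified square system on the end cells, which reduces (after the sum/difference decoupling in characteristic variables) to a triangular $2\times 2$ system for the leading Legendre coefficients and is therefore local and well-posed, as you assert.
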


% \subsubsection{Time discretization}
% The resulting 
% semi-discrete scheme \eqref{scheme:sys1dX} for the initial boundary value problem 
% \eqref{sys:aug}
% can be, again, written in the following matrix-vector form:
%  \begin{align}
%  \label{discrete-form-B}
% [{\bld u}_h]_t =\bld M^{-1}\Big({\bld  A} [\bld u_h] + [\bld f(t)]\Big),
% \end{align}
% where $\bld f(t)$ takes into account boundary condition, and 
% the matrix ${\bld A}$ now satisfies ${\bld A}+{\bld A}^T\ge 0$.
% We may still apply the Lax-Wendroff procedure for \eqref{discrete-form-B}, and obtain 
% the following fully discrete scheme (assuming uniform time step $\Delta t$):
% \begin{align}
%  \label{fully-discrete-B}
%  [\bld u_h^{n+1}]- [\bld u_h^{n-1}] = \sum_{i=0}^r \left(\frac{2\Delta t^{2i+1}}{(2i+1)!} 
%  [d^{2i+1}\bld u_h^n]\right),
% \end{align}
% where $[d^{0}\bld u_h] = [\bld u_h]$, and $[d^{s}\bld u_h]$, $s\ge 1$, 
% is recursively defined through the following map:
% \begin{align}
%  \label{map}
%  [d^{s}\bld u_h^n] =  &\;
%  \bld M^{-1}\Big({\bld  A} [d^{s-1}\bld u_h] + [\bld f^{(s-1)}(t)]\Big)\quad s\ge 1.
% \end{align}
\begin{remark}[Time discretization, stability issue]
 The semi-discrete DG scheme for the boundary value problem naturally 
 leads to the ODE system \eqref{source},
 where $\bld f(t)$ takes into account the boundary condition. 
 We can simply apply the time-discretization \eqref{fully-discrete-s} or \eqref{rk-s}
 as discussed in Remark \ref{rk-source}.

However, our numerical results, not reported in this paper,  showed that the resulting 
fully discrete scheme using the time discretization \eqref{fully-discrete-s}
is {\it unconditionally unstable}.
Such instability was not observed for the Runge-Kutta type time discretization \eqref{rk-s}.

Similar boundary-driven instability was documented in the literature for 
energy-conserving schemes such as the finite difference leap-frog method,
c.f. \cite{AbarbanelGottlieb79}, which is identical to the
lowest-order $P^0$-DG method with a central flux and 
a leap-frog time stepping on uniform meshes.
One remedy to cure this instability for the leap-frog method, c.f. \cite{AbarbanelGottlieb79}, 
was to simply modify the leap-frog time-stepping on cells that touch the boundary 
to be a forward Euler time stepping.
% We extended this idea to the higher-oder methods, and 
% propose to modify the time integration 
% for the two cells near 
% the boundary using the Runge-Kutta
% type scheme \eqref{rk}:
% \begin{align}
%  \label{fully-discrete-B}
%  [\bld u_{h,bdry}^{n+1}]- [\bld u_{h,bdry}^{n}] = \sum_{i=0}^{2r} \left(\frac{\Delta t^{i+1}}{(i+1)!} 
%  [d^{i+1}\bld u_{h,bdry}^n]\right),
% \end{align}
% where $ [\bld u_{h,bdry}]$  corresponds to the vector of degrees of freedom on the two 
% cells near the boundary. 

We can extend this idea to the higher-order Lax-Wendroff methods as follows:
for cells not touching the boundary, use the ($2r+1$)-stage energy-conserving 
Lax-Wendroff method \eqref{fully-discrete-s}, 
and for cells that touch the boundary, use the ($2r+1$)-stage 
Runge-Kutta type Lax-Wendroff method \eqref{rk-s}.
The resulting scheme is numerically shown, with results not reported in 
this paper to save space,
to be high-order accurate and 
conditionally stable, although a detailed stability analysis is missing.
% For the lowest-order case, 
% we conjecture that stability  the 
% We refer to the GKS theory \cite{Gustafsson72} 
% for a discussion of stability issues for initial-boundary value problems.
\end{remark}

% \begin{remark}
% It is well-known that stability issues usually arise energy-conserving schemes 
% such as the finite difference leap-frog method 
% is unconditionally unstable, c.f. \cite{AbarbanelGottlieb79}.
% \end{remark}
% \end{remark}

\section{Energy-conserving DG methods for the multidimensional case}\label{sec:2d}
In this section, we present the  energy-conserving DG methods
for the multidimensional symmetric linear hyperbolic systems. 
Without loss of generality, we describe our DG scheme
in two dimensions $(d = 2)$; all the arguments can be easily
extended to the more general cases $d >2$.

\newcommand{\B}{{\bld B}}
\newcommand{\Bn}{{\bld B_{\bld n}}}
\newcommand{\Bnp}{{\bld B_{\bld n^+}}}
\newcommand{\Bnm}{{\bld B_{\bld n^-}}}
We shall restrict ourselves mainly to the following two-dimensional
system of linear symmetric hyperbolic conservation laws
problem\begin{subequations}\label{sys2d}
\begin{align}
\B_0\,\bld u_t + \B_1\, \bld u_x + \B_2\,\bld u_{y} & =0, &&  
\hspace{-1.8cm}  (x,y,t)\in \Omega \times (0,T], \\
             \bld u(x,y,0)  & = \bld u_0(x,y), &&  \hspace{-1.8cm}  (x,y) \in \Omega,
\end{align}
\end{subequations}
where $\bld B_0: \Omega\rightarrow \mathbb{R}^{m\times m}$ is a positive, 
piecewise-constant, diagonal matrix,
$\bld B_1, \B_2\in \mathbb{R}^{m\times m}$ are two symmetric matrices.
For the sake of simplicity, we consider only the periodic boundary conditions.
% The domain $\Omega$ is a periodic box.

Given any direction field $\bld n=(n_x,n_y)$, we denote the matrix 
\begin{align}
 \label{Bn}
 \bld B_{\bld n} : = n_x \bld B_1 +  n_y \bld B_2.
\end{align}
Based on the one-dimensional results, 
we shall first derive an energy-conserving DG methods for \eqref{sys2d} in
the case when the matrix
$\bld B_{\bld n }$ 
has the same number 
of positive and negative eigenvalues for any direction $\bld n$, denoted 
as ${r_{\bld n}\ge 0}$. The number of positive eigenvalues
$r_{\bld n}$ may be different for different direction $\bld n$.
We call such system a {\it linear symmetric hyperbolic system with paired eigenvalues}.
We denote the (orthogonal) eigenvalue decomposition of $\B_{\bld n}$ as
\begin{subequations}
 \label{eig-d}
 \begin{align}
  \B_{\bld n} = \bld S_{\bld n}\bld\Lambda_{\bld n}\bld S_{\bld n}^{-1},
 \end{align}
 with the eigenvalues in the diagonal matrix in descending order
 \begin{align}
   \bld\Lambda_{\bld n}:= \mathrm{diag}([\lambda_{\bld n,1}^{+},\cdots,
   \lambda_{\bld n,r_{\bld n}}^{+},0,\cdots,0,
  \lambda_{\bld n,r_{\bld n}}^{-},\cdots,\lambda_{\bld n, 1}^{-}]).
 \end{align}
\end{subequations}

We then give examples including the advection, acoustics, aeroacoustics, electromagnetism, and
elastodynamics that shall fit into the framework. 
The key idea follows from
the one-dimensional case by adding auxiliary {\it zero} equations to the system so that 
we get a system with paired eigenvalues.

\subsection{Notation and definitions in the two-dimensional case}
% \subsubsection{The meshes}
Let $\oh = \{K\}$ denote a conforming triangulation of $\Omega$ with shape-regular triangular/rectangular 
elements $K$,
and set $\poh = \{\pk : K \in \oh\}$ where 
$\pk$ is the boundary of the element $K$.
Denote $\mathcal{E}_h$ be the collection of edges in the mesh $\oh$.
For each $K \in \oh$, we
denote by $h_K$ the diameter of $K$ and set, as usual,
$h = \max_{K \in \oh} h_K$. The finite element space associated with
the mesh $\oh$ is of the form
$$
V_h^k : = \{  v \in L^2(\Omega) : v|_K \in V_k(K)  \quad \forall K \in \oh \},
$$
where 
\begin{align*}
 V_k(K) =\left\{
 \begin{tabular}{ll}
  $P^k(K)$ & if $K$ is a triangle,\\[.1cm]
  $Q^k(K)$ & if $K$ is a rectangle,
 \end{tabular}
 \right.
\end{align*}
and $P^k(K)$ is the space of 
polynomials of degrees at most $k$ on $K$, and 
$Q^k(K)$
is the tensor product of polynomials of degrees
at most $k$ in each variable.

% \subsubsection{Jump and average}
We would like to adopt the following notation for the average and jumps of any function $\phi_h$ in the 
DG space $V_h^k$.
Let $E\in \mathcal{E}_h$ be an edge shared by two elements $K^+$ and $K^-$. Let $\bld n^\pm = (n_x^\pm, n_y^\pm)$ be the normal 
direction on $E$ from $K^\pm$. 
We select the unique element $K^-$ such that the direction $\bld n^- =(n_x^-,n_y^-)$ satisfies
\begin{align}
 \label{normal-p}
\bld v_{\mathrm{ref}}\cdot \bld n^-\ge 0,
 \text{ (when $\bld v_{ref}\cdot \bld n^-=0$ we take $\bld n^-$ such that $n_x^->0$)},
\end{align}
where $\bld v_{\mathrm{ref}} = (1,1)$ is an 
{\it artificial (velocity) vector} used to single out the unique $K^-$. See an illustration in 
Figure \ref{fig:E}.
\begin{figure}
\caption{Illustration of the choice of direction for an edge $E$ shared by 
two triangles $K^-$ and $K^+$}
\includegraphics[width=.6\textwidth]{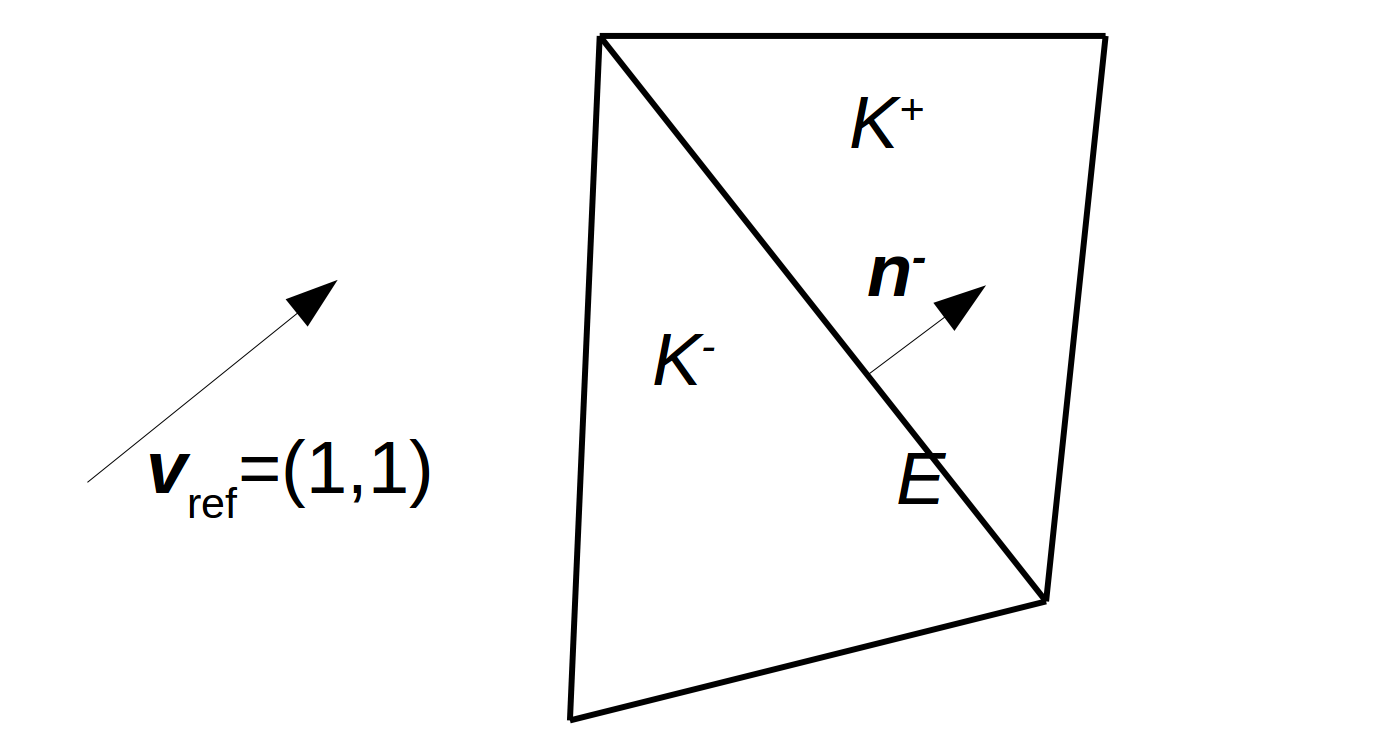}
 \label{fig:E}
\end{figure}
Let $(\phi_h)^\pm =\left.(\phi_h)\right|_{K^\pm}$. 
We use
\begin{align}
 \label{avg-jmp2d}
\jump {\phi_h}|_E  = \phi_h^+- \phi_h^-,\quad \quad
\mean {\phi_h}|_E  = \frac12(\phi_h^++ \phi_h^-)
\end{align}
to denote the jump and the average of $\phi_h$ on the edge $E$.
We shall always take $\bld n^-$ as the normal direction of the edge $E$.

% \subsubsection{Function spaces and norms}
% Denote by $\normk {\cdot}$  the standard $L^2$ norms on $K$.
% 
% For any integer $ l \ge 0$, the standard Sobolev $l$ norm on $K$ is denoted by $\norm \cdot_{H^l(K)}$.
% Furthermore, the norms of the broken Sobolev spaces
% $W^{l,p}(\oh) = \{ v \in L^2 (\Omega) : v|_K \in W^{l,p}(K) \quad \forall K \in \oh \}$ with $p = 2, \infty$
% are given by
% $$
%  \normhdl v = \left( \sumk {\normhkls v}\right)^{\frac12},
%  \quad \norm v_{W^{l,\infty}(\oh)} = \max_{K\in \oh} \norm v_{W^{l,\infty}(K)}.
% $$
% In the case $l = 0$, we denote $\normd v = \norm v_{H^0(\oh)}$.

\subsection{Energy-conserving DG methods for linear symmetric hyperbolic systems with 
paired eigenvalues}
\label{sec:sys2d}
Following the one-dimensional case \eqref{scheme:sys1dX}, the energy-conserving 
semi-discrete DG methods for the linear symmetric hyperbolic systems \eqref{sys2d}
with paired eigenvalues
is given as follows.
Find, for any time $t \in (0, T]$,
the unique function $\bld u_h = \bld u_h(t)
\in [V_h^k]^m$ such that
\begin{align}
 \intk {\B_0(\bld u_h)_t\cdot\bld v_h}  - \intk {\B_1\bld u_h\cdot(\bld  v_h)_x} 
 - \intk {\B_2\bld u_h \cdot(\bld v_h)_y} &\nonumber\\
 + 
 \intdk {\widehat{\Bn\bld u_h}\cdot \bld v_h} & =\;0
                    \label{scheme:2d}
\end{align}
holds for all $\bld v_h \in [V_h^k]^m$ and all $ K \in \oh$. 
% where $\intk {(\cdot)}$ stands for $\intij {(\cdot)}$. 
Here, the numerical flux, which is similar to the one-dimensional case
\eqref{flux:sys1dt}, is given as follows:
\begin{subequations}\label{flux:2d}
\begin{alignat}{2}
\widehat{{\Bnm}{\bld u}_h}|_{E} = &
{\Bnm}\mean{{\bld u}_h}
+ 
\frac12\sum_{\mu=1}^{r_{\bld n}}
\sqrt{|\lambda_{\bld n^-, \mu}^{+}\lambda_{\bld n^-, \mu}^{-}|}\,
{\bld S_{\bld n^-}}{\bld R}_\mu
{\bld S_{\bld n^-}^{-1}}
\jump{{\bld u}_h},\\
\widehat{\Bnp{\bld u}_h}|_{E} = &
-\widehat{\Bnm{\bld u}_h}|_{E},
\end{alignat}
\end{subequations}
where $\bld n^-$ is the direction of the edge $E$ that satisfy \eqref{normal-p}, 
and $\bld n^+=-\bld n^-$ is the direction with opposite sign.
% , and 
% $s_{\mu, \bld n^-}\in \{-1, 1\}$ is  a direction to be determined.

Recall that the matrix $\bld R_\mu\in \mathbb{R}^{m\times m}$ is the 
anti-symmetric matrix that only has non-vanishing components on the 
$(\mu,m-\mu)$ and $(m-\mu, \mu)$ locations, with 
\[
 \bld R_\mu(\mu, m-\mu) = 1,\quad 
  \bld R_\mu(m-\mu,\mu) = -1.
\]
Note that the above choice of numerical flux is consistent and conservative.

\begin{remark}[On the numerical flux]
 Recall that the (dissipative) upwinding numerical flux is given by 
\begin{alignat}{2}
\label{flux:upwind}
\widehat{{\Bnm}{\bld u}_h}|_{E} = &
{\Bnm}\mean{{\bld u}_h}
- 
\frac12 
{\bld S_{\bld n^-}}|\bld \Lambda_{\bld n^-}\!|\,
{\bld S_{\bld n^-}^{-1}}
\jump{{\bld u}_h},
\end{alignat}
where 
\[
|\bld \Lambda_{\bld n^+}| 
= \mathrm{diag}([|\lambda_{\bld n^-,1}^{+}|,\cdots,
   |\lambda_{\bld n^-,r_{\bld n}}^{+}|,0,\cdots,0,
  |\lambda_{\bld n^-,r_{\bld n}}^{-}|,\cdots,|\lambda_{\bld n^-, 1}^{-}|]).
\]
It is obtained by solving the Riemann problem along the normal direction.
The cheaper-to-implement, more dissipative  Lax-Friedrichs flux is given by 
\begin{alignat}{2}
\label{flux:lf}
\widehat{{\Bnm}{\bld u}_h}|_{E} = &
{\Bnm}\mean{{\bld u}_h}
- 
\frac12 \max\{|\lambda_{\bld n^-,1}^{+}|, |\lambda_{\bld n^-,1}^{-}|\}
\jump{{\bld u}_h},
\end{alignat}
and the central flux is given by 
\begin{alignat}{2}
\label{flux:central}
\widehat{{\Bnm}{\bld u}_h}|_{E} = &
{\Bnm}\mean{{\bld u}_h}.
\end{alignat}
The only difference among these numerical fluxes is on the choice of the {\it stabilization} term 
involving $\jump{\bld u_h}$.
\end{remark}

We have energy-conservation of the method \eqref{scheme:2d}, just as 
the one-dimensional case. The proof is identical, and is omitted.
\begin{theorem}
\label{thm:sys2dX}
%  Assume that the exact solution ${\bld u}$ of \eqref{sys2d} is sufficiently smooth. 
Let ${\bld u}_h\in [V_h^k]^{m}$ be the numerical solution of the  
semi-discrete DG scheme \eqref{scheme:2d}.
Then, the total energy 
\[
 {E}(t) =\int_{\Omega}{({\bld B_0}{\bld u}_h)\cdot{\bld u}_h}\mathrm{d\bld x}
\]
is conserved for all time.
% Moreover, for $T >0$ there holds the following
% error estimate
% \begin{equation} \label{thm:sys-est}
% \normI {{\bld u}(T) -{\bld u}_h(T)} 
% \le C (1+T) \ho,
% \end{equation}
% where $C$ is independent of $h$.
\end{theorem}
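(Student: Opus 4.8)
The plan is to mimic the one-dimensional argument used in the proof of Theorem~\ref{thm:sys1d}. First I would take $\bld v_h = \bld u_h$ in the scheme \eqref{scheme:2d} and sum over all $K \in \oh$. Since $\B_0$ is a time-independent symmetric (diagonal) matrix, the first volume term collapses to $\frac{d}{dt}\tfrac12\intk{\B_0\bld u_h\cdot\bld u_h}$. For the two remaining volume terms, the symmetry of $\B_1$ and $\B_2$ gives $\B_1\bld u_h\cdot(\bld u_h)_x = \tfrac12\,\partial_x(\B_1\bld u_h\cdot\bld u_h)$ and $\B_2\bld u_h\cdot(\bld u_h)_y = \tfrac12\,\partial_y(\B_2\bld u_h\cdot\bld u_h)$, so an elementwise integration by parts converts them into $-\tfrac12\intdk{\bld B_{\bld n}\bld u_h\cdot\bld u_h}$, with $\bld n$ the outward unit normal of $\pk$.

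Next I would reorganize the resulting boundary sum over elements into a sum over edges $E\in\mathcal{E}_h$. On each interior edge shared by $K^-$ and $K^+$, using the convention that $\bld n^-$ is the chosen edge normal and $\bld n^+ = -\bld n^-$, together with the conservativity $\widehat{\Bnp\bld u_h} = -\widehat{\Bnm\bld u_h}$ from \eqref{flux:2d} and the periodic boundary condition, the contributions of the two adjacent elements combine into
\[
\sum_{E\in\mathcal{E}_h}\Bigl(\widehat{\Bnm\bld u_h}\cdot\jump{\bld u_h} - \tfrac12\,\Bnm\bld u_h^+\cdot\bld u_h^+ + \tfrac12\,\Bnm\bld u_h^-\cdot\bld u_h^-\Bigr)\Big|_E .
\]
Substituting the flux formula \eqref{flux:2d}, the ``central'' part obeys the same pointwise identity as in one dimension,
\[
\Bnm\mean{\bld u_h}\cdot\jump{\bld u_h} - \tfrac12\,\Bnm\bld u_h^+\cdot\bld u_h^+ + \tfrac12\,\Bnm\bld u_h^-\cdot\bld u_h^- = 0,
\]
so only the stabilization part survives, namely $\sum_{E}\tfrac12\sum_{\mu=1}^{r_{\bld n}}\sqrt{|\lambda_{\bld n^-,\mu}^{+}\lambda_{\bld n^-,\mu}^{-}|}\,\bigl(\bld S_{\bld n^-}\bld R_\mu\bld S_{\bld n^-}^{-1}\jump{\bld u_h}\bigr)\cdot\jump{\bld u_h}$.

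Finally I would observe that $\bld S_{\bld n^-}\bld R_\mu\bld S_{\bld n^-}^{-1} = \bld S_{\bld n^-}\bld R_\mu\bld S_{\bld n^-}^{\mathsf T}$ is anti-symmetric, since $\bld R_\mu$ is anti-symmetric and $\bld S_{\bld n^-}$ is orthogonal; hence the quadratic form $\bigl(\bld S_{\bld n^-}\bld R_\mu\bld S_{\bld n^-}^{-1}\bld v\bigr)\cdot\bld v$ vanishes for every $\bld v\in\mathbb{R}^m$. Therefore $\frac{d}{dt}\tfrac12 E(t) = 0$, i.e.\ $E(t)$ is conserved. I expect the only real obstacle to be the bookkeeping: carefully matching the outward normals of the two elements sharing an edge with the single fixed edge normal $\bld n^-$ so that the boundary terms produced by the integration by parts pair up with the numerical-flux terms with the correct signs, and checking that the cancellation identity is applied with $\bld B_{\bld n}$ evaluated using the same normal on both sides of the edge. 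Once the signs are tracked correctly, the proof reduces to the two elementary facts already used in one dimension---the central-flux cancellation identity and the anti-symmetry of the stabilization matrix---so the argument is genuinely a transcription of the one-dimensional proof, which is why it is omitted in the paper.
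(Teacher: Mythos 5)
Your proof is correct and follows exactly the route the paper intends: the paper omits the proof precisely because it is the elementwise-integration-by-parts argument of Theorem \ref{thm:sys1d} transcribed to two dimensions, with the edge-based bookkeeping, the central-flux cancellation identity for the symmetric matrix $\bld B_{\bld n^-}$, and the anti-symmetry of $\bld S_{\bld n^-}\bld R_\mu\bld S_{\bld n^-}^{-1}$ doing the work just as you describe.
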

% \begin{proof}
%  The proof of energy conservation is identical to the one-dimensional case in 
%  Theorem \ref{thm:sys1d}.
% The main ingredient of the proof of optimal convergence 
% is a tensor-product version of the one-dimensional
% local projection \eqref{proj-adv}, and a crucial superconvergence result 
% of this projection on tensor-product meshes; see \cite[Lemma 3.6]{CockburnCartesian01}.
% We leave out the details.
%  \end{proof}

 \begin{remark}[Doubling the unknowns]
 \label{rk:sys2d}
  Any linear symmetric hyperbolic system \eqref{sys2d} can be modified to be a 
  system with paired eigenvalues, essentially following the doubling the unknowns approach in section \ref{sub:ad}.
In particular, we shall consider the following augmented system:
\begin{subequations}
\label{sys:2daux}
\begin{align}
 \B_0\,\bld u_t + \B_1\, \bld u_x + \B_2\,\bld u_{y} & =0, \\
 \B_0\,\bld \phi_t - \B_1\, \bld \phi_x - \B_2\,\bld \phi_{y} & =0,
\end{align}
\end{subequations}
where the auxiliary {\it zero} variable $\bld \phi(x,y,t)$ has a {\it zero} initial condition.
It is easy to observe that the above system is a system with paired eigenvalues.
Taking into account its block anti-symmetric structure, 
the scheme \eqref{scheme:2d} applied to the equations \eqref{sys:2daux}
has the following form.
Find, for any time $t \in (0, T]$,
the unique function $(\bld u_h, \bld \phi_h)\in [V_h^k]^m\times [V_h^k]^m$ such that
\begin{subequations}
\label{scheme:2d-aux}
\begin{align}
 \intk {\B_0(\bld u_h)_t\cdot\bld v_h}  - \intk {\B_1\bld u_h\cdot(\bld  v_h)_x} 
 - \intk {\B_2\bld u_h \cdot(\bld v_h)_y} &\nonumber\\
 + 
 \intdk {\widehat{\Bn\bld u_h}\cdot \bld v_h} & =\;0,\\
 \intk {\B_0(\bld \phi_h)_t\cdot\bld \psi_h}  + \intk {\B_1\bld \phi_h\cdot(\bld  \psi_h)_x} 
 + \intk {\B_2\bld \phi_h \cdot(\bld \psi_h)_y} &\nonumber\\
 - 
 \intdk {\widehat{\Bn\bld \phi_h}\cdot \bld \psi_h} & =\;0,
\end{align}
holds for all $(\bld v_h, \bld \psi_h) \in [V_h^k]^m\times [V_h^k]^m$ and all $ K \in \oh$, 
with 
the numerical fluxes given as follows:
\begin{alignat}{2}
\widehat{{\Bnm}{\bld u}_h}|_{E} = &
{\Bnm}\mean{{\bld u}_h}
+
\frac12{|\Bnm\!|}\jump{{\bld \phi}_h}, \\
\widehat{{\Bnm}{\bld \phi}_h}|_{E} = &{\Bnm}\mean{{\bld \phi}_h}
+
\frac12{|\Bnm\!|}\jump{{\bld u}_h},\\
\widehat{\Bnp{\bld u}_h}|_{E} = &
-\widehat{\Bnm{\bld u}_h}|_{E},\quad
\widehat{\Bnp{\bld \phi}_h}|_{E} = 
-\widehat{\Bnm{\bld \phi}_h}|_{E},
\end{alignat}
\end{subequations}
where $|\bld B_{\bld n^-}\!|  = \bld S_{\bld n^-}|\bld \Lambda_{\bld n^-}|\bld S_{\bld n^{-}}^{-1}$.
It is interesting to see the similarity of this numerical flux with the upwinding flux \eqref{flux:upwind}.
Unlike the upwinding case, the jump term in the above numerical flux do not contribute 
to dissipation, but to the {\it coupling} of the primal variables $\bld u_h$ and 
the auxiliary variables $\bld \phi_h$.
Note also that the above numerical flux 
is different from the one dimensional case in Remark \ref{rk:sys} as we need the 
eigenvalue decomposition of $\bld B_{\bld n^{-}}$ for the numerical flux.
We numerically observed that on triangular meshes, such eigenvalue decomposition is crucial for the 
method to be optimally convergent.

Finally we point out that doubling the unknowns essentially leads to a doubling of the computational 
cost when explicit time-stepping schemes, see section \eqref{sec:time}, are used.
 \end{remark}

\begin{remark}[Error estimates]
\label{rk-error}
The error analysis of the method \eqref{scheme:2d} is more involved than
the 1D case. 
Suboptimal convergence order of $k$ can be proven using a standard $L^2$-projection on general mesh.
Optimal convergence order of $k+1$ for all the variables $\bld u_h$ 
on rectangular meshes
% , 
% with $s_{\mu,\bld n} = -1$ 
% for all $\mu$,  or  $s_{\mu,\bld n} = 1$ for all $\mu$ in the numerical flux \eqref{flux:2d}, 
can be proven 
by using the  superconvergence result of Lesaint and Raviart \cite{LasaintRaviart74} 
of the tensor-product Gauss-Radau projection, see also \cite[Lemma 3.6]{CockburnCartesian01}.
However, the method is numerically observed to be suboptimal for certain hyperbolic systems 
on general triangular meshes  including acoustics with zero background velocity (Example 4.10 in section \ref{sec:num}) 
and elastodynamics (Example 4.11 in section \ref{sec:num}). 
It was also numerically observed in  \cite{LiShiShu18} to be suboptimal for the 
DG method with an alternating numerical flux for the time domain Maxwell's equation on triangular meshes, 
which is equivalent to the method \eqref{scheme:2d} directly applied to the Maxwell's equations (see section \ref{sec:em2d}).
We note that the aforementioned equations are by themselves systems 
with paired eigenvalues.
% On the other hand, the method \eqref{scheme:2d} on triangular meshes is numerically observed to be optimal
% for the advection equation  (Example 4.8 in section \ref{sec:num}) 
% and acoustic equation with non-zero background velocity (Example 4.9 in section \ref{sec:num}).
On the other hand, the doubling unknowns approach \eqref{scheme:2d-aux} 
in Remark \ref{rk:sys2d} applied to the augmented system 
is numerically observed to be optimally convergent for the aforementioned equations.
Of course, we have also doubled the computational cost.
Further study needs to be conducted to understand the convergence behavior of this method on triangular meshes.
\end{remark}

 \begin{remark}[Time discretization, source term, and boundary conditions]
 The same high-order energy-conserving Lax-Wendroff time discretization \eqref{fully-discrete} can be used for 
 \eqref{scheme:2d} to get a fully discrete energy-conserving DG method.
 We can also use the Runge-Kutta type Lax-Wendroff time discretization \eqref{rk}.
Source terms and boundary conditions can be easily incorporated into the scheme \eqref{scheme:2d}.
% We can also impose upwinding boundary conditions for 
% boundary value problems with inflow boundary condition.
We refer details to the discussion in section \ref{sec:time} and section \ref{sec:bdry}.
 \end{remark}

 \subsection{Practical examples}
 Now, we consider the application of Theorem \ref{thm:sys2dX} and Remark \ref{rk:sys2d} for a large class of symmetric 
 linear hyperbolic system of equations.
 \subsubsection{Advection}
 \label{sec:adv2d}
We consider the advection equation 
\begin{align}
  u_t + b_0 u_x + b_1 u_y = 0,
 \end{align}
 with $b_0^2+b_1^2\not=0$.
 Following Remark \ref{rk:sys2d},
we convert it to a system with paired eigenvalues by introducing the auxiliary {\it zero}
 function $\phi(x,y,t)$ that solve the equation
 \[
  \phi_t - b_0 \phi_x - b_1 \phi_y = 0.
 \]
%  Taking $s_{\mu, \bld n} = \mathrm{sign}(b_n)$, 
The energy-conserving numerical flux \eqref{flux:2d} 
for the
resulting system 
on the edge $E$ with normal direction $\bld n = (n_x, n_y)$
is given by 
\begin{align}
 \label{flux-adv2d}
 \widehat{{\Bn}{\bld u}_h}|_{E} = &
 \left[
 \begin{tabular}{l}
  $b_n \mean{u_h}
$\\[.2cm]
  $-b_n\mean{\phi_h}$
 \end{tabular}
 \right]+\frac12
 \left[
 \begin{tabular}{l}
  $
 |b_n|
 \jump{\phi_h}$\\[.2cm]
  $-
  |b_n|\jump{u_h}$
 \end{tabular}
 \right],
\end{align}
where $b_n = b_0n_x + b_1n_y$ is the normal velocity.
We mention in particular that the above numerical flux is 
independent of the artificial direction $\bld v_{\mathrm{ref}}=(1,1)$ 
used to determine the unique direction of the edge $E$ in \eqref{normal-p}.

\subsubsection{Acoustics}
 \label{sec:ac2d}
We consider the acoustics equations
\begin{align}
 \left[
 \begin{tabular}{c}
  $p$\\
  $u$\\
  $v$
 \end{tabular}
 \right]_t
 +
 \left[
 \begin{tabular}{ccc}
  $u_0$ &  $K_0$ & $0$\\
  $1/\rho_0$ & $u_0$ & $0$\\
  $0$ & $0$ & $u_0$
 \end{tabular}
 \right]
  \left[
 \begin{tabular}{c}
  $p$\\
  $u$\\
  $v$
 \end{tabular}
 \right]_x
+ 
\left[
 \begin{tabular}{ccc}
  $v_0$ & $0$ & $K_0$\\
  $0$ & $v_0$ & $0$\\
  $1/\rho_0$ & $0$ & $v_0$
 \end{tabular}
 \right]
\left[
 \begin{tabular}{c}
  $p$\\
  $u$\\
  $v$
 \end{tabular}
 \right]_y=0,
 \end{align}
where $p(x,y,t)$ is the pressure, and $\vec{u} = (u(x,y,t), v(x,y,t))$ is the velocity vector,
and for the constants, $\vec{u_0}=(u_0,v_0)$ is the velocity 
for a background flow, $K_0>0$ is the bulk modulus of compressibility 
and $\rho_0>0$ is the density.

Similar to the one-dimensional case in Section \ref{sub:ac}, 
the system can be symmetrized to the following form:
 \begin{align}
 \label{ac2d}
\underbrace{\left[
 \begin{tabular}{ccc}
  $1/K_0$ &  $0$ & $0$\\
  $0$&$\rho_0$ & $0$\\
  $0$ & $0$ & $\rho_0$
 \end{tabular}
 \right]}_{:=\B_0}
\left[
 \begin{tabular}{c}
  $p$\\
  $u$\\
  $v$
 \end{tabular}
 \right]_t
 +
\underbrace{\left[
 \begin{tabular}{ccc}
  $u_0/K_0$ &  $1$ & $0$\\
  $1$ & $u_0\rho_0$ & $0$\\
  $0$ & $0$ & $u_0\rho_0$
 \end{tabular}
 \right]}_{:=\B_1}
  \left[
 \begin{tabular}{c}
  $p$\\
  $u$\\
  $v$
 \end{tabular}
 \right]_x&\nonumber\\
+ 
\underbrace{\left[
 \begin{tabular}{ccc}
  $v_0/K_0$ & $0$ & $1$\\
  $0$ & $v_0\rho_0$ & $0$\\
  $1$ & $0$ & $v_0\rho_0$
 \end{tabular}
 \right]}_{:=\B_2}
\left[
 \begin{tabular}{c}
  $p$\\
  $u$\\
  $v$
 \end{tabular}
 \right]_y&=0,
 \end{align}
 
The doubling unknowns approach in Remark \ref{rk:sys2d}
shall be used for the system \eqref{ac2d} on general triangular meshes. 
However, we can save the computational cost by looking into the eigenvalue structure of the 
matrix $\Bn = n_x\bld B_1+n_y\bld B_2$. 
The following discussion is similar to the 
one dimensional case in section \ref{sub:ac}.
We mention that the following simplification shall be done on 
Cartesian meshes, as we numerically observe suboptimal convergence of this simplified 
method on general triangular meshes.

The matrix $\Bn$ is given by 
 \begin{align*}
\Bn:=  \left[
 \begin{tabular}{ccc}
  $v_n/K_0$ & $n_x$ & $n_y$\\
  $n_x$ & $v_n\rho_0$ & $0$\\
  $n_y$ & $0$ & $v_n\rho_0$
 \end{tabular}
 \right],
 \end{align*}
where $v_n = u_0n_x+v_0n_y$. 
It has an eigenvalue $\lambda_1 = v_n\rho_0$, and a pair of eigenvalues $\lambda_{2}^\pm$
that are the two roots of the following quadratic equation 
\[
 \lambda^2 - (v_n/K_0+v_n/K_0)\lambda + v_n^2/c_0^2-1 = 0
\]
where $c_0 = \sqrt{K_0/\rho_0}$ is the speed of sound.
We shall distinguish with the following three cases.
\subsection*{Zero background velocity ($\vec{u_0} = 0$)}
In this case ($u_0=v_0=0$), the three eigenvalues of $\Bn$ are $0, \pm1$.
The system \eqref{ac2d} by itself
is a linear symmetric hyperbolic system with paired eigenvalues. 
% We do not need to 
% modify the equations.
We can direct apply the method \eqref{scheme:2d} to the equations \eqref{ac2d}.
The numerical flux of the method  
is noting but the alternating numerical flux considered in 
\cite{XingChouShu14}:
\begin{align}
\label{ac-flux0}
\widehat{{\Bn}{\bld u}_h}|_{E} = 
\left[
 \begin{tabular}{c}
  $u_h^+n_x+v_h^+n_y$\\
  $p_h^-n_x$\\
  $p_h^-n_y$
 \end{tabular}
 \right].
\end{align}
This method is numerically observed to be suboptimal on general triangular meshes.
% where $\bld n = (n_x,n_y)$ is the normal direction of the edge $E$.
\subsection*{Subsonic case ($0<u_0^2+v_0^2<c_0^2$)}
In this case, the magnitude of the 
normal velocity $v_n=u_0n_x+v_0n_y$ on any edge $E$ 
is less than $c_0$, and in general is not equal to zero. 
In this case, the matrix $\Bn$ either has 2 positive eigenvalues and 1 negative eigenvalue ($v_n>0$)
or has 1 positive eigenvalues and 2 negative eigenvalue ($v_n<0$).
We can convert the system \eqref{ac2d} to a 4-component linear symmetric system with
paired eigenvalues by introducing the {\it zero} function 
$\phi(x,y,t)$ that solves 
\[
\rho_0 \phi_t - \rho_0u_0 \phi_x-\rho_0v_0\phi_y = 0.
\]
The resulting system reads

\begin{align}
 \label{ac2dX}
\underbrace{\left[
 \begin{tabular}{cccc}
  $1/K_0$ &  $0$ & $0$&$0$\\
  $0$&$\rho_0$ & $0$&$0$\\
  $0$ & $0$ & $\rho_0$ &$0$\\
  $0$ & $0$ & $0$ &$\rho_0$
 \end{tabular}
 \right]}_{:=\widetilde{\B_0}}
\left[
 \begin{tabular}{c}
  $p$\\
  $u$\\
  $v$\\
  $\phi$
 \end{tabular}
 \right]_t
 +
\underbrace{\left[
 \begin{tabular}{cccc}
  $u_0/K_0$ &  $1$ & $0$& $0$\\
  $1$ & $u_0\rho_0$ & $0$& $0$\\
  $0$ & $0$ & $u_0\rho_0$& $0$\\
  $0$ & $0$ & $0$&$-u_0\rho_0$\\
 \end{tabular}
 \right]}_{:=\widetilde{\B_1}}
  \left[
 \begin{tabular}{c}
  $p$\\
  $u$\\
  $v$\\
  $\phi$
 \end{tabular}
 \right]_x&\nonumber\\
+ 
\underbrace{\left[
 \begin{tabular}{cccc}
  $v_0/K_0$ & $0$ & $1$& $0$\\
  $0$ & $v_0\rho_0$ & $0$& $0$\\
  $1$ & $0$ & $v_0\rho_0$& $0$\\
  $0$ & $0$ & $0$& $-v_0\rho_0$\\
 \end{tabular}
 \right]}_{:=\widetilde{\B_2}}
\left[
 \begin{tabular}{c}
  $p$\\
  $u$\\
  $v$\\
  $\phi$
 \end{tabular}
 \right]_y&=0,
 \end{align}
 It is easy to verify that the matrix $\widetilde{\Bn}=
 \widetilde{\B_1}n_x+\widetilde{\B_2}n_y
 $ always has paired eigenvalues for the above 
 system for any normal direction $\bld n$.
 
Denoting the 4-component vector $\widetilde{\bld u}_h=[p_h, u_h, v_h,\phi_h]'$, 
the numerical flux on the edge $E$ 
for the system \eqref{ac2dX} reads
\begin{align}
 \label{ac-flux-sub}
 \widehat{\widetilde{\Bn}\widetilde{\bld u}_h}|_{E} = 
\left[
 \begin{tabular}{c}
  $v_n/K_0\mean {p_h}+\mean{\!u_h\!}n_x+
 \mean{\!v_h\!}n_y $\\[.2ex]
  $\mean {p_h}n_x
  +v_n\rho_0\mean{u_h}
  $\\[.2ex]
  $\mean {p_h}n_y
  +v_n\rho_0(\mean{v_h}$\\[.2ex]
  $-v_n\rho_0\mean{\phi_h}$\\
 \end{tabular}
 \right]
 +
\frac12 \left[
 \begin{tabular}{c}
  $
% {\sqrt{1-v_n^2/c_0^2}}
\alpha_n
\,(\jump{u_h}n_x + \jump{v_h}n_y) $\\[.2ex]
  $-
  \alpha_n
\jump{p_h}n_x
  -\beta_n\jump{\phi_h}n_y
  $\\[.2ex]
  $-\alpha_n\,\jump{p_h}n_y
  +\beta_n\jump{\phi_h}n_x$\\[.2ex]
  $\beta_n(\jump{u_h}n_y-\jump{v_h}n_x)$\\
 \end{tabular}
 \right],
\end{align}
where $\alpha_n = \sqrt{1-v_n^2/c_0^2}$ and 
$\beta_n = |v_n|$.
Different from the zero background velocity case, 
this method is numerically observed to be optimal on general triangular meshes.

\subsection*{Supersonic case $u_0^2+v_0^2\ge c_0^2$}
In this case, there exists direction $\bld n$ such that
all eigenvalues of the matrix $\Bn$ are positive.
We shall use the doubling the unknowns approach in Remark \ref{rk:sys2d}, and consider the
augmented 6-component system.
% 
% We shall double the equations to obtain a symmetric 
% system with paired eigenvalues, by introducing 
% the 3-component {\it zero} function $\bld \phi(x,y,t)$ that satisfy 
% \[
%  \B_0 \bld \phi_t - \B_1 \bld \phi_x - \B_2 \bld \phi_y = 0.
% \]
% It is easy to verify that the $6\times 6$ matrix $\widetilde{\Bn}$ for the resulting system 
% always have paired eigenvalues.
% The numerical fluxes is similar to the scalar case, and reads
% \begin{subequations}
% \label{flux2d-sup}
% \begin{align}
%   \widehat{{\Bn}{\bld u}_h}|_{E} = &\;
%  {{\Bn}\mean{\bld u_h}+
%  \frac12
% |\Bn|
%  \jump{\bld\phi_h}},\\
%   \widehat{{\Bn}{\bld \phi}_h}|_{E} = &\;
%  {{\Bn}\mean{\bld \phi_h}+\frac12 |\Bn|\jump{\bld u_h}},
% \end{align}
% \end{subequations}
% where $|\Bn|=\bld S_{\bld n}|\bld \Lambda_{\bld n}|\bld S_{\bld n}^{-1}$.
% We note that on Cartesian meshes, we can simply use the following cheaper component-wise 
% numerical flux, similar to the 1D case in Remark \ref{rk:sys}:
% \begin{align*}
%   \widehat{{\Bn}{\bld u}_h}|_{E} = &\;
%  {{\Bn}(\mean{\bld u_h}+
%  \frac12 \jump{\bld\phi_h}}),\\
%   \widehat{{\Bn}{\bld \phi}_h}|_{E} = &\;
%  {{\Bn}(\mean{\bld \phi_h}+\frac12 \jump{\bld u_h}}).
% \end{align*}
% \begin{remark}[Convergence on triangle meshes]
% Numerical results, see Example 4.8/4.9 in section \ref{sec:num}, show that 
% for the above method 
% the acoustic equation with zero background velocity, the method with 
% the alternting flux \eqref{ac-flux0} loss {\it one} 
% convergence order in the velocity variables $(u_h, v_h)$.
%  
% \end{remark}

\subsubsection{Linearized Euler equations}
 \label{sec:le2d}
We consider the linearized Euler equations in dimensionless form
\begin{align}
\label{lee}
 \left[
 \begin{tabular}{c}
  $\rho$\\
  $u$\\
  $v$\\
  $p$
 \end{tabular}
 \right]_t
 +
 \left[
 \begin{tabular}{cccc}
  $M_x$ &  $1$ & $0$ & $0$\\
  $0$ & $M_x$ & $0$& $1$\\
  $0$ & $0$ & $M_x$& $0$\\
  $0$ & $1$ & $0$& $M_x$\\
  \end{tabular}
 \right]
  \left[
 \begin{tabular}{c}
  $\rho$\\
  $u$\\
  $v$\\
  $p$ 
 \end{tabular}
 \right]_x
+ 
\left[
 \begin{tabular}{cccc}
  $M_y$ &  $0$ & $1$ & $0$\\
  $0$ & $M_y$ & $0$& $0$\\
  $0$ & $0$ & $M_y$& $1$\\
  $0$ & $0$ & $1$& $M_y$\\
  \end{tabular}
 \right]
\left[
 \begin{tabular}{c}
   $\rho$\\
  $u$\\
  $v$\\
  $p$ 
 \end{tabular}
 \right]_y=0,
 \end{align}
 where $M_x, M_y>=0$ with are the constant mean flow Mach number in the 
$x$- and $y$-direction, respectively.
 Subtracting the first equation by the fourth, one obtain the following 
linear symmetric system for the unknown vector 
$\bld u:=[\rho-p, u, v, p]'$:
\begin{align}
\label{lee-2}
 \left[
 \begin{tabular}{c}
  $\rho-p$\\
  $u$\\
  $v$\\
  $p$
 \end{tabular}
 \right]_t
 +
 \underbrace{\left[
 \begin{tabular}{cccc}
  $M_x$ &  $0$ & $0$ & $0$\\
  $0$ & $M_x$ & $0$& $1$\\
  $0$ & $0$ & $M_x$& $0$\\
  $0$ & $1$ & $0$& $M_x$\\
  \end{tabular}
 \right]}_{:=\B_1}
  \left[
 \begin{tabular}{c}
  $\rho-p$\\
  $u$\\
  $v$\\
  $p$ 
 \end{tabular}
 \right]_x\nonumber\\
+ 
\underbrace{\left[
 \begin{tabular}{cccc}
  $M_y$ &  $0$ & $0$ & $0$\\
  $0$ & $M_y$ & $0$& $0$\\
  $0$ & $0$ & $M_y$& $1$\\
  $0$ & $0$ & $1$& $M_y$\\
  \end{tabular}
 \right]}_{:=\B_2}
\left[
 \begin{tabular}{c}
   $\rho-p$\\
  $u$\\
  $v$\\
  $p$ 
 \end{tabular}
 \right]_y=0.
 \end{align}
 Note that the above equation is simply the combination of 
 the acoustic equations for $[u,v,p]$, and the advection equation for $\rho-p$.
 We can just follow the discussion on the previous two subsections to obtain the energy-conserving method.
 We leave out the details.
%  In general , these components talk to each other via proper boundary conditions.
%  But in our periodic setting, they are evolving independently.
%  
%  Similarly to the acoustics case, we shall consider the following three cases.
%  \subsection*{Zero background velocity ($M_x=M_y = 0$)}
%  This reduces to the acoustics equations for $[u,v,p]$ in the non-dimensional form (with zero
%  background mean flow). We do not need to modify the equations. 
% 
% \subsection*{Subsonic case ($0<M_x^2+M_y^2<1$)}
% In this case,  
% % $\Bn$ 
% % has 3 positive eigenvalues, and one negative eigenvalue.
% We shall convert the system \eqref{lee-2} to a 6-component linear symmetric system with
% paired eigenvalues by introducing the {\it zero} functions
% $\phi(x,y,t)$ and $\psi(x,y,t)$ that solves 
% \[
%  \phi_t - M_x \phi_x-M_y\phi_y = 0, \quad 
% \psi_t - M_x \psi_x-M_y\psi_y = 0,
% \]
% with $\phi$ pairing for the advection equation $\rho-p$, and 
% $\psi$ pairing for the acoustic equation for $[u,v,p]$. We leave out detailed formulation
% of the corresponding numerical flux.
% 
% \subsection*{Supersonic case $M_x^2+M_y^2>=1$}
% % In this case all eigenvalues of $\Bn$ are positive.
% In this case, we double the equations to obtain an 8-component symmetric system.

\subsubsection{Electromagnetism}
 \label{sec:em2d}
We consider the two-dimensional time-domain Maxwell equations 
in transverse magnetic form (TM) in a heterogeneous media
\begin{align}
\label{elec}
 \left[
 \begin{tabular}{c}
  $\mu H^x$\\
  $\mu H^y$\\
  $\epsilon E^z$
 \end{tabular}
 \right]_t
 +
 \left[
 \begin{tabular}{ccc}
  $0$ &  $0$ & $0$\\
  $0$ &  $0$ & $-1$\\
  $0$ & $-1$ & $0$
 \end{tabular}
 \right]
  \left[
 \begin{tabular}{c}
  $H^x$\\
  $H^y$\\
  $E^z$
 \end{tabular}
 \right]_x
+ 
 \left[
 \begin{tabular}{ccc}
  $0$ &  $0$ & $1$\\
  $0$ &  $0$ & $0$\\
  $1$ & $0$ & $0$
 \end{tabular}
 \right]
  \left[
 \begin{tabular}{c}
  $H^x$\\
  $H^y$\\
  $E^z$
 \end{tabular}
 \right]_y
=0,
 \end{align}
where $(H^x(x,y,t), H^y(x,y,t))$ is the magnetic fields, and $E^z(x,y,t)$ is the electric field,
and $\mu, \epsilon:\Omega\rightarrow \mathbb{R}$ is the magnetic permeability, and electric permittivity,
respectively.

The system is similar to the acoustic case with {\it zero} background mean flow.
On general triangular meshes, we propose to double the unknowns and 
obtain the DG method \eqref{scheme:2d-aux} for the 6-components augmented system. 
On Cartesian meshes, we can also directly apply the method \eqref{scheme:2d} 
to \eqref{elec} to obtain the DG method with 
an an alternating numerical flux. 
The alternating flux DG method, which is 
optimal on Cartesian meshes but
suboptimal on triangular meshes,  was discussed in details recently 
\cite{LiShiShu17,LiShiShu18}, where 
special focus was made on Maxwell's equations in Drude metamaterials.

\subsubsection{Elastodynamics}
 \label{sec:es2d}
We consider the elastodynamics equations in a heterogeneous, isotropic media,
written in stress-velocity form
\begin{align}
 \label{elas}
 \left[
 \begin{tabular}{c}
  $\sigma_{xx}$\\
  $\sigma_{yy}$\\
  $\sigma_{xy}$\\
  $v$\\
  $w$
 \end{tabular}
 \right]_t
 +
 \left[
 \begin{tabular}{ccccc}
  $0$ & $0$ &$0$ & $-(\lambda+2\mu)$ & $0$\\
  $0$ & $0$ &$0$ & $-\lambda$ & $0$\\
  $0$ & $0$ &$0$ & $0$ & $-\mu$\\
  $-1/\rho$ & $0$ &$0$ & $0$ & $0$\\
  $0$ & $0$ &$-1/\rho$ & $0$ & $0$\\
  \end{tabular}
 \right]
 \left[
 \begin{tabular}{c}
  $\sigma_{xx}$\\
  $\sigma_{yy}$\\
  $\sigma_{xy}$\\
  $v$\\
  $w$
 \end{tabular}
 \right]_x\nonumber\\[.2ex]
+ 
\left[
 \begin{tabular}{ccccc}
  $0$ & $0$ &$0$ & $0$ & $-\lambda$\\
  $0$ & $0$ &$0$ & $0$ & $-(\lambda+2\mu)$\\
  $0$ & $0$ &$0$ & $-\mu$ & $0$\\
  $0$ & $0$ & $-1/\rho$ &$0$ & $0$\\
  $0$  &$-1/\rho$ & $0$& $0$ & $0$\\
  \end{tabular}
 \right]\left[
 \begin{tabular}{c}
  $\sigma_{xx}$\\
  $\sigma_{yy}$\\
  $\sigma_{xy}$\\
  $v$\\
  $w$
 \end{tabular}
 \right]_y&=0,
\end{align}
where $\bld \sigma = \left[\begin{tabular}{cc}
                            $\sigma_{xx}$& $\sigma_{xy}$\\
                            $\sigma_{xy}$& $\sigma_{yy}$                            
                           \end{tabular}
\right]$ is the stress field, and 
$\bld v = [v,w]'$ is the velocity field, and 
$\lambda$ and $\mu$ are the Lam\'e constants, and $\rho$ is the density functions.
We assume $\lambda,\mu,\rho$ are piecewise constants with discontinuity aligned with the 
mesh.
The system \eqref{elas} can be transformed into a symmetric hyperbolic 
system by left multiplying 
the equation with the symmetric positive definite matrix 
\[
 \bld B_0 : = \left[
 \begin{tabular}{c c c c c}
  $\frac{\lambda+2\mu}{4\mu(\mu+\lambda)}$ &
  $\frac{-\lambda}{4\mu(\mu+\lambda)}$ &
  $0$&$0$&$0$\\[.2ex]
  $\frac{-\lambda}{4\mu(\mu+\lambda)}$ &$\frac{\lambda+2\mu}{4\mu(\mu+\lambda)}$ &
  $0$&$0$&$0$\\
  $0$ &
  $0$ &
  $\frac{1}{\mu}$&$0$&$0$\\
  $0$ &
  $0$ &
  $0$&$\rho$&$0$\\
  $0$ &
  $0$ &
  $0$&$0$&$\rho$
 \end{tabular}
 \right].
\]
Denoting the 5-component vector $\bld u = [\sigma_{xx}, \sigma_{yy}, \sigma_{xy}, v, w]'$, 
and the matrices
\begin{align*}
 \bld B_1 = 
 \left[
 \begin{tabular}{ccccc}
  $0$ & $0$ &$0$ & $-1$ & $0$\\
  $0$ & $0$ &$0$ & $0$ & $0$\\
  $0$ & $0$ &$0$ & $0$ & $-1$\\
  $-1$ & $0$ & $0$ &$0$ & $0$\\
  $0$  &$0$ & $-1$& $0$ & $0$\\
  \end{tabular}
 \right],\quad 
 \bld B_2 = 
 \left[
 \begin{tabular}{ccccc}
  $0$ & $0$ &$0$ & $0$ & $0$\\
  $0$ & $0$ &$0$ & $0$ & $-1$\\
  $0$ & $0$ &$0$ & $-1$ & $0$\\
  $0$ & $0$ & $-1$ &$0$ & $0$\\
  $0$  &$-1$ & $0$& $0$ & $0$\\
  \end{tabular}
 \right]
\end{align*}

We have 
\begin{align}
\label{es:sym}
 \bld B_0 \bld u_t + \bld B_1\bld u_x + \bld B_2 \bld u_y = 0. 
\end{align}
The matrix $\Bn = \bld B_1 n_x + \bld B_2 n_y$
has two positive eigenvalues $\sqrt{1\pm n_x n_y}$, and two negative eigenvalues
$-\sqrt{1\pm n_x n_y}$, and a zero eigenvalue. Hence, the system \eqref{elas} is already a symmetric system with paired eigenvalues.

Similar to the electromagnetism case in section \ref{sec:em2d}, 
we propose to double the unknowns and 
obtain the DG method \eqref{scheme:2d-aux} for the 10-component augmented system
on general triangular meshes. 
On Cartesian meshes, we directly apply the method \eqref{scheme:2d} to the equations 
\eqref{es:sym} to obtain the DG method with 
% the following alternating numerical flux: 
% The resulting numerical flux
% % , written in terms of the primative variable $\bld u$,
% is the following alternating numerical flux:
\begin{align*}
\widehat{\Bn\bld u_h}|_E = 
\left[
\begin{tabular}{c}
 $-n_x v_h^+$\\[.2cm]
 $-n_y w_h^+$\\[.2cm]
 $-n_y v_h^+ - n_x w_h^+$\\[.2cm]
 $-n_x \sigma_{xx,h}^--n_y \sigma_{xy,h}^-$\\[.2cm]
 $-n_x \sigma_{xy,h}^--n_y \sigma_{yy,h}^-$
\end{tabular}
\right].
%  \widehat{\sigma}_{xx,h}|_{j-1/2} = \sigma_{xx,h}^+,\quad
%  \widehat{\sigma}_{yy,h}|_{j-1/2} = \sigma_{yy,h}^+,\quad&
%  \widehat{\sigma}_{xy,h}|_{j-1/2} = \sigma_{xy,h}^+,\\
%  \widehat{\sigma}_{xx,h}|_{i-1/2} = \sigma_{xx,h}^+,\quad
%  \widehat{\sigma}_{yy,h}|_{i-1/2} = \sigma_{yy,h}^+,\quad&
%  \widehat{\sigma}_{xy,h}|_{i-1/2} = \sigma_{xy,h}^+,\\
%  \widehat v_h|_{j-1/2} = v_h^-,\quad &
%  \widehat w_h|_{j-1/2} = w_h^-,\\
%  \widehat v_h|_{i-1/2} = v_h^-,\quad &
%  \widehat w_h|_{i-1/2} = w_h^-.
\end{align*}
% Note that the resulting semi-discrete DG scheme preserves the energy 
% \[
%  E(t) = \int_{\Omega} \B_0\bld u_h\cdot\bld u_h\,\mathrm{dxdy}
%  =
%  \int_{\Omega} \mathcal{A}\bld\sigma_h\cdot\bld\sigma_h
%  +\rho\bld v\cdot\bld v\,\mathrm{dxdy},
% \]
% where $\mathcal{A}$ is the following compliance tensor
% \[
%  \mathcal{A}\bld\sigma := \frac{1}{2\mu}(\bld \sigma-\frac{\lambda}{2\mu+2\lambda}
%  (\sigma_{xx}+\sigma_{yy})\bld I_2).
% \]

For an efficient time integration,
we prefer to work directly with the (equivalent) original stress-velocity form \eqref{elas}. 
This leads to a diagonal mass matrix for the whole system if the orthogonal basis is used. 

% \begin{remark}[Alternating numerical flux in 3D]
% Due to the two-way wave nature for the second-order wave equations,
% the above results on acoustics with {\it zero} background mean flow velocity, 
%  electromagnetism and elastodynamics extends directly to
%  three-dimensions using an alternating numerical flux.
% \end{remark}

\section{Numerical results}
\label{sec:num}

We present extensive numerical results to 
assess the performance of the proposed energy-conserving DG method.
We also compare results with the 
(dissipative) upwinding DG methods, and the (energy-conserving) DG methods with a central flux.
All numerical simulation are performed using the open-source finite-element software {\sf NGSolve} 
\cite{Schoberl16}, \url{https://ngsolve.org/}.

For all the accuracy tests, we restrict ourselves to the spatial error, and 
take the 6-stage 6th order ($r=6$) Lax-Wendroff time stepping \eqref{rk} 
with a small enough time step size so that the temporal error can be neglected.
% and take 
% $\Delta t = C\!F\!L\,h$ with a small enough CFL number $C\!F\!L$.

\subsection*{Example 4.1: 1D advection with periodic boundary condition}
We consider the following advection equation
\begin{align}
\label{eq1}
 u_t + u_x = 0
\end{align}
on a unit interval $I=[0,1]$ with 
initial condition $u(x,0) = \sin(2\pi x)$, and a periodic boundary condition.
The exact solution is 
\begin{align*}
u(x, t) = \sin(2\pi (x-t)).
\end{align*}

We present numerical results with the following three DG methods:
\begin{itemize}
 \item [(U)] the DG method for \eqref{eq1} with an upwinding numerical flux.
%  , 
%  Temporal approximation: $k+1$-th stage (Runge-Kutta type) Lax-Wendroff time stepping \eqref{rk}.
 \item [(C)]the DG method for \eqref{eq1} with a central numerical flux.
%  ,
%  Temporal approximation: $k+1$-th stage energy-conserving Lax-Wendroff time stepping \eqref{fully-discrete}.
 \item [(A)] the DG method \eqref{scheme:adv1d}
 with numerical flux \eqref{flux:opt}  for the augmented
 system \eqref{aux-adv}. 
\end{itemize}
% In order to reduce time error, we take the 
% we take the 6-stage 6th order Lax-Wendroff time stepping \eqref{rk} with 
% $r = 6$.
Table \ref{table:adv1d} lists the numerical errors and their orders for 
the above three DG methods at $T=0.5$.
We use $P^k$ polynomials with $0\le k\le 4$ on a nonuniform mesh which is a $10\%$ random perturbation of the 
uniform mesh.
% and take the time step size to 
% be $\Delta t = C\!F\!L\,h$ with  $C\!F\!L = 0.5$ for $P^0$, 
% $C\!F\!L = 0.2$ for $P^1$, $C\!F\!L = 0.1$ for $P^2$, $C\!F\!L = 0.06$ for $P^3$, $C\!F\!L = 0.04$ for $P^4$.

From the table we conclude that, 
one can always observe optimal $(k+1)$th order of accuracy for both the variable $u_h$ (which approximate the solution $u$)
and $\phi_h$ (which approximate the {\it zero} function)
for the new energy-conserving DG method \eqref{scheme:adv1d}. This validates 
our convergence result in Theorem \ref{thm:adv1d:err}.
Moreover, the absolute value of the error is slightly smaller than
the optimal-convergent upwinding DG method for all polynomial degrees.
We also observe suboptimal convergence for the (energy-conserving) DG method with a central flux for all polynomial degree.
We specifically point out that while 
optimal convergence for the central DG method
has been proven for even polynomial degrees on uniform meshes \cite{CockburnShu98},
Table \ref{table:adv1d}  shows that such optimality no longer holds on nonuniform meshes, regardless of the polynomial degree.

\begin{table}[htbp]
\caption{\label{table:adv1d} 
The $L^2$-errors
% $\|u-u_h\|_I$ (and $\|\phi_h\|$) 
% at time $T=0.5$ 
% $e_u(T) = \|u-u_h\|_{L^2(I)}$ and $e_\phi=\|\phi_h\|_{L^2(I)}$) 
and orders for Example 4.1 for the upwinding DG method (U), the central DG method (C), and the 
new DG method (A) on a random mesh of $N$ cells. 
$T =0.5$. 
} \centering
\bigskip
\begin{tabular}{|c|c|cc|cc|cc|cc|}
\hline
 & &\multicolumn{2}{|c|}{(U)}&\multicolumn{2}{c|}{(C)}&\multicolumn{4}{c|}{(A)}\\
 \hline
%  \cline{3-4}   \cline{6-7} \cline{9-12}
 &  {$N$}      & $\|u-u_h\|$ & Order & $\|u-u_h\|$ & Order &  $\|u-u_h\|$ & Order 
 &  $\|\phi_h\|$  & Order\\
\hline
\multirow{4}{*}{$P^0$}
 & 10 & 5.22e-01  &  -0.00 & 3.16e-01  &  -0.00 & 1.40e-01  &  -0.00 & 2.06e-01  &  -0.00 \\ 
 & 20 & 3.11e-01  &  0.75 & 1.70e-01  &  0.89 & 6.61e-02  &  1.08 & 1.10e-01  &  0.90 \\ 
 & 40 & 1.74e-01  &  0.84 & 1.59e-01  &  0.10 & 3.25e-02  &  1.02 & 5.52e-02  &  1.00 \\ 
 & 80 & 9.28e-02  &  0.90 & 1.09e-01  &  0.54 & 1.62e-02  &  1.01 & 2.76e-02  &  1.00 \\ 
 & 160 & 4.76e-02  &  0.96 & 3.76e-02  &  1.54 & 8.10e-03  &  1.00 & 1.39e-02  &  0.99 \\ 
\hline
\multirow{4}{*}{$P^1$}
 & 10 & 1.88e-02  &  -0.00 & 4.72e-02  &  -0.00 & 1.06e-02  &  -0.00 & 1.47e-02  &  -0.00 \\ 
 & 20 & 4.58e-03  &  2.04 & 2.21e-02  &  1.09 & 2.76e-03  &  1.94 & 3.64e-03  &  2.01 \\ 
 & 40 & 1.11e-03  &  2.05 & 1.08e-02  &  1.03 & 6.75e-04  &  2.03 & 8.73e-04  &  2.06 \\ 
 & 80 & 2.77e-04  &  2.00 & 5.40e-03  &  1.01 & 1.69e-04  &  1.99 & 2.18e-04  &  2.00 \\ 
 & 160 & 6.89e-05  &  2.01 & 2.69e-03  &  1.00 & 4.22e-05  &  2.01 & 5.44e-05  &  2.00 \\ 
\hline
\multirow{4}{*}{$P^2$}
 & 10 & 9.22e-04  &  -0.00 & 1.05e-02  &  -0.00 & 6.03e-04  &  -0.00 & 7.04e-04  &  -0.00 \\ 
 & 20 & 1.18e-04  &  2.97 & 1.49e-03  &  2.81 & 7.60e-05  &  2.99 & 8.94e-05  &  2.98 \\ 
 & 40 & 1.43e-05  &  3.04 & 3.92e-04  &  1.92 & 9.25e-06  &  3.04 & 1.09e-05  &  3.03 \\ 
 & 80 & 1.77e-06  &  3.02 & 6.90e-05  &  2.51 & 1.14e-06  &  3.02 & 1.35e-06  &  3.02 \\ 
 & 160 & 2.23e-07  &  2.99 & 4.93e-06  &  3.81 & 1.44e-07  &  2.99 & 1.71e-07  &  2.98 \\ 
\hline
\multirow{4}{*}{$P^3$}
 & 10 & 3.64e-05  &  -0.00 & 1.93e-04  &  -0.00 & 2.37e-05  &  -0.00 & 2.66e-05  &  -0.00 \\ 
 & 20 & 2.51e-06  &  3.86 & 1.05e-05  &  4.20 & 1.66e-06  &  3.84 & 1.87e-06  &  3.83 \\ 
 & 40 & 1.47e-07  &  4.10 & 1.69e-06  &  2.63 & 9.71e-08  &  4.10 & 1.10e-07  &  4.09 \\ 
 & 80 & 9.24e-09  &  3.99 & 1.63e-07  &  3.38 & 6.11e-09  &  3.99 & 6.92e-09  &  3.99 \\ 
 & 160 & 5.71e-10  &  4.02 & 2.02e-08  &  3.01 & 3.78e-10  &  4.02 & 4.28e-10  &  4.01 \\ 
\hline
\multirow{4}{*}{$P^4$}
 & 10 & 1.22e-06  &  -0.00 & 3.76e-05  &  -0.00 & 8.29e-07  &  -0.00 & 9.20e-07  &  -0.00 \\ 
 & 20 & 4.10e-08  &  4.90 & 1.34e-06  &  4.82 & 2.75e-08  &  4.91 & 3.04e-08  &  4.92 \\ 
 & 40 & 1.20e-09  &  5.10 & 8.83e-08  &  3.92 & 8.04e-10  &  5.10 & 8.89e-10  &  5.10 \\ 
 & 80 & 3.63e-11  &  5.04 & 3.67e-09  &  4.59 & 2.44e-11  &  5.04 & 2.70e-11  &  5.04 \\ 
 & 160 & 1.17e-12  &  4.96 & 7.66e-11  &  5.58 & 7.87e-13  &  4.95 & 8.62e-13  &  4.97 \\ 
\hline
\end{tabular}
\end{table}

\subsection*{Example 4.2: 1D advection with inflow boundary condition}
We consider the same problem as in Example 4.1, but with the inflow boundary condition at the left end
\[
 u(0,t) = \sin(-2\pi t).
\]
We use the three DG methods considered in Example 4.1, again on a 
nonuniform mesh which is a $10\%$ random perturbation of the  uniform mesh.
For the boundary treatment, the upwinding boundary numerical flux \eqref{bdry} 
is used for all three DG methods.
% And 
The time integration takes into account the boundary source term; see \eqref{rk-s}.
Table \ref{table:adv1d-D} lists the numerical errors and their orders with the three DG methods at $T=0.5$.
We observe similar convergence results as that for Example 4.1. 
We specifically mention that the Lax-Wendroff time integration \eqref{rk-s} 
do not leads to order reduction, which is typically 
observed for Runge-Kutta methods.
% We specifically remark that  the boundary treatment \qref{fully-}
% We use $P^k$ polynomials with $0\le k\le 4$ on a nonuniform mesh which is a $10\%$ random perturbation of the 
% uniform mesh, and take the time step size to 
% be $\Delta t = C\!F\!L\,h$ with  $C\!F\!L = 0.5$ for $P^0$, 
% $C\!F\!L = 0.2$ for $P^1$, $C\!F\!L = 0.1$ for $P^2$, $C\!F\!L = 0.06$ for $P^3$, $C\!F\!L = 0.04$ for $P^4$.
% From the table we conclude that, 
% one can always observe optimal $(k+1)$th order of accuracy for both the variable $u_h$ (which approximate the solution $u$)
% and $\phi_h$ (which approximate the {\it zero} function)
% for the new energy-conserving DG method \eqref{scheme:adv1d}. This validate our convergence result in Theorem \ref{thm:adv1d:err}.
% Moreover, the the absolute value of the error is always slightly smaller than
% the optimal upwinding DG method.
% We also observe suboptimal convergence for the energy-conserving DG method with a central flux for all polynomial degree.

\begin{table}[htbp]
\caption{\label{table:adv1d-D} 
The $L^2$-errors
% $\|u-u_h\|_I$ (and $\|\phi_h\|$) 
% at time $T=0.5$ 
% $e_u(T) = \|u-u_h\|_{L^2(I)}$ and $e_\phi=\|\phi_h\|_{L^2(I)}$) 
and orders for Example 4.2 for the upwinding DG method (U), the central DG method (C), and the 
new DG method (A) on a random mesh of $N$ cells. 
$T =0.5$. 
} \centering
\bigskip
\begin{tabular}{|c|c|cc|cc|cc|cc|}
\hline
 & &\multicolumn{2}{|c|}{(U)}&\multicolumn{2}{c|}{(C)}&\multicolumn{4}{c|}{(A)}\\
 \hline
%  \hline
%  \cline{3-4}   \cline{6-7} \cline{9-12}
 &  {$N$}      & $\|u-u_h\|$ & Order & $\|u-u_h\|$ & Order &  $\|u-u_h\|$ & Order 
 &  $\|\phi_h\|$  & Order\\
\hline
\multirow{4}{*}{$P^0$}
 & 10 & 4.54e-01  &  -0.00 & 3.27e-01  &  -0.00 & 1.89e-01  &  -0.00 & 2.45e-01  &  -0.00 \\ 
 & 20 & 2.61e-01  &  0.80 & 1.58e-01  &  1.05 & 8.16e-02  &  1.21 & 1.15e-01  &  1.09 \\ 
 & 40 & 1.46e-01  &  0.84 & 7.20e-02  &  1.13 & 3.63e-02  &  1.17 & 5.76e-02  &  1.00 \\ 
 & 80 & 7.79e-02  &  0.90 & 4.81e-02  &  0.58 & 1.75e-02  &  1.05 & 2.86e-02  &  1.01 \\ 
 & 160 & 3.99e-02  &  0.96 & 5.29e-02  &  -0.14 & 8.45e-03  &  1.05 & 1.41e-02  &  1.02 \\ 
\hline
\multirow{4}{*}{$P^1$}
 & 10 & 1.78e-02  &  -0.00 & 4.75e-02  &  -0.00 & 1.14e-02  &  -0.00 & 1.49e-02  &  -0.00 \\ 
 & 20 & 4.38e-03  &  2.02 & 2.36e-02  &  1.01 & 2.73e-03  &  2.07 & 3.51e-03  &  2.09 \\ 
 & 40 & 1.11e-03  &  1.98 & 1.14e-02  &  1.05 & 6.73e-04  &  2.02 & 8.67e-04  &  2.02 \\ 
 & 80 & 2.76e-04  &  2.01 & 5.73e-03  &  1.00 & 1.68e-04  &  2.00 & 2.17e-04  &  2.00 \\ 
 & 160 & 6.86e-05  &  2.01 & 2.87e-03  &  1.00 & 4.20e-05  &  2.00 & 5.43e-05  &  2.00 \\ 
\hline
\multirow{4}{*}{$P^2$}
 & 10 & 8.87e-04  &  -0.00 & 1.76e-03  &  -0.00 & 6.58e-04  &  -0.00 & 7.57e-04  &  -0.00 \\ 
 & 20 & 1.20e-04  &  2.89 & 1.33e-04  &  3.72 & 8.04e-05  &  3.03 & 9.55e-05  &  2.99 \\ 
 & 40 & 1.42e-05  &  3.08 & 1.66e-05  &  3.01 & 9.83e-06  &  3.03 & 1.17e-05  &  3.03 \\ 
 & 80 & 1.78e-06  &  2.99 & 5.52e-06  &  1.59 & 1.20e-06  &  3.03 & 1.44e-06  &  3.03 \\ 
 & 160 & 2.23e-07  &  2.99 & 4.34e-07  &  3.67 & 1.48e-07  &  3.02 & 1.73e-07  &  3.06 \\ 
\hline
\multirow{4}{*}{$P^3$}
 & 10 & 4.04e-05  &  -0.00 & 1.13e-04  &  -0.00 & 2.27e-05  &  -0.00 & 2.44e-05  &  -0.00 \\ 
 & 20 & 2.13e-06  &  4.24 & 1.55e-05  &  2.86 & 1.47e-06  &  3.95 & 1.66e-06  &  3.88 \\ 
 & 40 & 1.47e-07  &  3.86 & 1.84e-06  &  3.08 & 9.43e-08  &  3.96 & 1.07e-07  &  3.95 \\ 
 & 80 & 8.84e-09  &  4.05 & 2.28e-07  &  3.01 & 5.85e-09  &  4.01 & 6.64e-09  &  4.01 \\ 
 & 160 & 5.64e-10  &  3.97 & 2.81e-08  &  3.02 & 3.70e-10  &  3.98 & 4.20e-10  &  3.98 \\ 
\hline
\multirow{4}{*}{$P^4$}
 & 10 & 1.04e-06  &  -0.00 & 2.76e-06  &  -0.00 & 1.03e-06  &  -0.00 & 1.05e-06  &  -0.00 \\ 
 & 20 & 3.65e-08  &  4.84 & 5.33e-08  &  5.70 & 2.52e-08  &  5.35 & 2.63e-08  &  5.32 \\ 
 & 40 & 1.20e-09  &  4.93 & 3.65e-09  &  3.87 & 8.79e-10  &  4.84 & 9.32e-10  &  4.82 \\ 
 & 80 & 3.69e-11  &  5.02 & 1.84e-10  &  4.31 & 2.52e-11  &  5.13 & 2.76e-11  &  5.08 \\ 
 & 160 & 1.13e-12  &  5.02 & 4.04e-12  &  5.51 & 7.81e-13  &  5.01 & 8.64e-13  &  5.00 \\ 
\hline
\end{tabular}
\end{table}

\subsection*{Example 4.3: 1D acoustics with periodic boundary condition}
We consider the acoustic equation \eqref{acoustics1d} with coefficients 
$\rho_0=K_0=1, u_0 = 0.5$, i.e.,
\begin{align*}
 p_t + .5p_x + u_x = &0,\\
 u_t + p_x + .5u_x = &0.
\end{align*}
The domain is a unit interval $I=[0,1]$.
The initial condition \[p(x,0) = \sin(2\pi x), u(x,0) = 0,\] and a periodic boundary condition is used.
The exact solution is
\begin{align*}
p(x, t) = \frac12\sin(2\pi (x-1.5 t))+\frac12\sin(2\pi (x+.5 t)),\\[.2ex]
u(x, t) = \frac12\sin(2\pi (x-1.5 t))-\frac12\sin(2\pi (x+.5 t)).
\end{align*}
We are in the subsonic regime, 
Theorem \ref{thm:ac1d:err} indicates the energy-conserving DG method \eqref{scheme:acoustics1d}
with the numerical flux \eqref{acoustics-flux-1} and \eqref{acoustics-flux-2} 
using $\alpha_{j-\frac12} = \frac12\sqrt{0.75}$ is optimally convergent. We label this method as (A). 
Again, we also consider the numerical results for 
the DG method with the upwinding flux, labeled as (U),  and with the central flux, labeled as (C).

Table \ref{table:ac1d} lists the numerical errors and their orders with the three DG methods at $T=0.5$.
Again, we observe optimal convergence for the DG method with upwinding flux (U) and with the 
new energy-conserving flux (A), but suboptimal convergence for the DG method with central flux (C).

Similar numerical results, not reported here to save space,
are also obtained for the supersonic case where the new method (A) shall solve an 
augmented system with 4 components.

\begin{table}[htbp]
\caption{\label{table:ac1d} 
The $L^2$-error $(\|u-u_h\|^2+\|p-p_h\|^2)^{1/2}$
and orders for Example 4.3 for the upwinding DG method (U), the central DG method (C), and the 
new DG method (A) on a random mesh of $N$ cells. 
$T =0.5$. 
} \centering
\bigskip
\begin{tabular}{|c|c|cc|cc|cc|}
\hline
 & &\multicolumn{2}{|c|}{(U)}&\multicolumn{2}{c|}{(C)}&\multicolumn{2}{c|}{(A)}\\
%  \cline{3-4}   \cline{6-7} \cline{9-12}
 &  {$N$}      & Error & Order & Error & Order &  Error & Order \\
\hline
\multirow{4}{*}{$P^0$}
 & 10 & 5.04e-01  &  -0.00 & 3.64e-01  &  -0.00 & 3.08e-01  &  -0.00 \\ 
 & 20 & 3.23e-01  &  0.64 & 2.26e-01  &  0.69 & 1.50e-01  &  1.04 \\ 
 & 40 & 1.83e-01  &  0.82 & 1.52e-01  &  0.57 & 7.46e-02  &  1.01 \\ 
 & 80 & 9.93e-02  &  0.88 & 1.07e-01  &  0.51 & 3.78e-02  &  0.98 \\ 
 & 160 & 5.15e-02  &  0.95 & 3.47e-02  &  1.62 & 1.90e-02  &  0.99 \\ 
\hline
\multirow{4}{*}{$P^1$}
 & 10 & 1.94e-02  &  -0.00 & 5.00e-02  &  -0.00 & 2.56e-02  &  -0.00 \\ 
 & 20 & 4.56e-03  &  2.09 & 2.22e-02  &  1.17 & 5.45e-03  &  2.23 \\ 
 & 40 & 1.10e-03  &  2.05 & 1.08e-02  &  1.04 & 1.31e-03  &  2.06 \\ 
 & 80 & 2.73e-04  &  2.02 & 5.35e-03  &  1.01 & 3.25e-04  &  2.01 \\ 
 & 160 & 6.86e-05  &  1.99 & 2.68e-03  &  1.00 & 8.16e-05  &  2.00 \\ 
\hline
\multirow{4}{*}{$P^2$}
 & 10 & 8.96e-04  &  -0.00 & 1.18e-02  &  -0.00 & 1.26e-03  &  -0.00 \\ 
 & 20 & 1.20e-04  &  2.90 & 2.14e-03  &  2.46 & 1.51e-04  &  3.06 \\ 
 & 40 & 1.44e-05  &  3.06 & 3.71e-04  &  2.53 & 1.66e-05  &  3.19 \\ 
 & 80 & 1.81e-06  &  2.99 & 6.52e-05  &  2.51 & 2.12e-06  &  2.97 \\ 
 & 160 & 2.25e-07  &  3.01 & 4.10e-06  &  3.99 & 2.67e-07  &  2.99 \\ 
\hline
\multirow{4}{*}{$P^3$}
 & 10 & 3.76e-05  &  -0.00 & 2.01e-04  &  -0.00 & 4.79e-05  &  -0.00 \\ 
 & 20 & 2.43e-06  &  3.95 & 1.02e-05  &  4.30 & 2.85e-06  &  4.07 \\ 
 & 40 & 1.45e-07  &  4.07 & 1.67e-06  &  2.62 & 1.71e-07  &  4.06 \\ 
 & 80 & 8.89e-09  &  4.03 & 1.59e-07  &  3.39 & 1.04e-08  &  4.03 \\ 
 & 160 & 5.65e-10  &  3.98 & 2.01e-08  &  2.98 & 6.62e-10  &  3.98 \\ 
\hline
\multirow{4}{*}{$P^4$}
 & 10 & 1.18e-06  &  -0.00 & 4.10e-05  &  -0.00 & 1.36e-06  &  -0.00 \\ 
 & 20 & 4.30e-08  &  4.78 & 1.97e-06  &  4.38 & 5.12e-08  &  4.73 \\ 
 & 40 & 1.22e-09  &  5.14 & 8.35e-08  &  4.56 & 1.43e-09  &  5.16 \\ 
 & 80 & 3.83e-11  &  4.99 & 3.46e-09  &  4.59 & 4.48e-11  &  5.00 \\ 
 & 160 & 1.18e-12  &  5.02 & 6.52e-11  &  5.73 & 1.38e-12  &  5.02 \\ 
\hline
\end{tabular}
\end{table}

\subsection*{Example 4.4: long time simulation: advection of a plane wave}
We consider the advection equation \eqref{eq1} on the unit interval with periodic boundary condition and 
initial condition $u(x,0) = \sin(6\pi x)$. The exact solution is 
\[
 u(x,t) = \sin(6\pi(x-t)).
\]
We use the above mentioned three DG methods using quadratic polynomials $k=2$.
Again, we denote the upwinding flux as (U), 
the central flux as (C), and the new method as (A).
% numerical flux \eqref{flux:opt} for the augmented 
% system \eqref{scheme:adv1d} as (A).
It is known that all three methods have optimally third-order convergence on uniform meshes.
We take a uniform mesh with $N = 10$ cells, so there are 10 degrees of freedom per wavelength.

For the time integration, we use the 3-stage, 3-rd order ($r=3$) 
Lax-Wendroff time stepping \eqref{rk}, denoted as RK3. This is identical
to the SSP-RK3 method, which is known to be dissipative. 
We also use  the 3-stage, 4-th order ($r=2$) 
energy-conserving, Lax-Wendroff time stepping \eqref{fully-discrete}, denoted as LF4,
for the energy-conserving DG methods (C) and (A). The CFL number for all cases is taken to be $0.1$.

The numerical results at time $T=10$ (wave propagated 30 cycles) 
% and $T=50$
% (wave propagated 150 cycles) 
of the three DG methods using RK3 time stepping
are shown in  Figure \ref{fig:pw1d1}.
% and Figure \ref{fig:pw1d2}.
From this figure, 
we observe that the upwinding method (U) is very dissipative,
% but has excellent phase accuracy, 
the central method (C) is less dissipative
% , with the dissipation comes from RK3 time integration, 
but has a large phase error, while the new method (A) provides 
excellent results in terms of 
dissipation error and phase accuracy.

\begin{figure}[ht!]
 \caption{Numerical solution at $T=10$ for Example 4.4. RK3 time stepping.
 Top left: method (U). Top right: method (C).
 Bottom left: primal variable $u_h$ for method (A). Bottom right: 
 auxiliary variable $\phi_h$ for method (A). 
 Solid line: numerical solution. Dashed line: exact solution.
 DG-$P^2$ space,  $10$ cells.}
 \label{fig:pw1d1}
 \includegraphics[width=.45\textwidth]{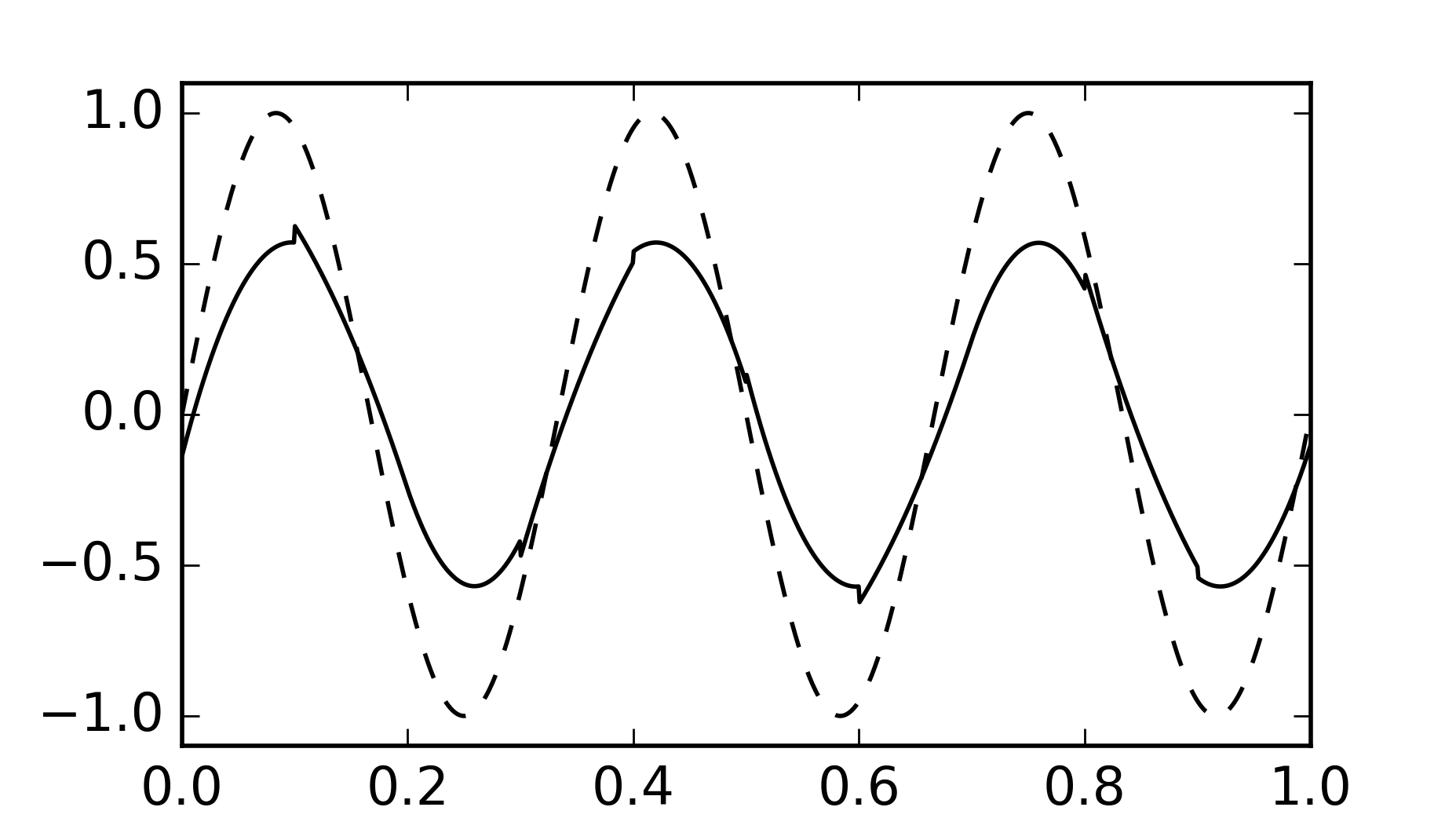}
 \includegraphics[width=.45\textwidth]{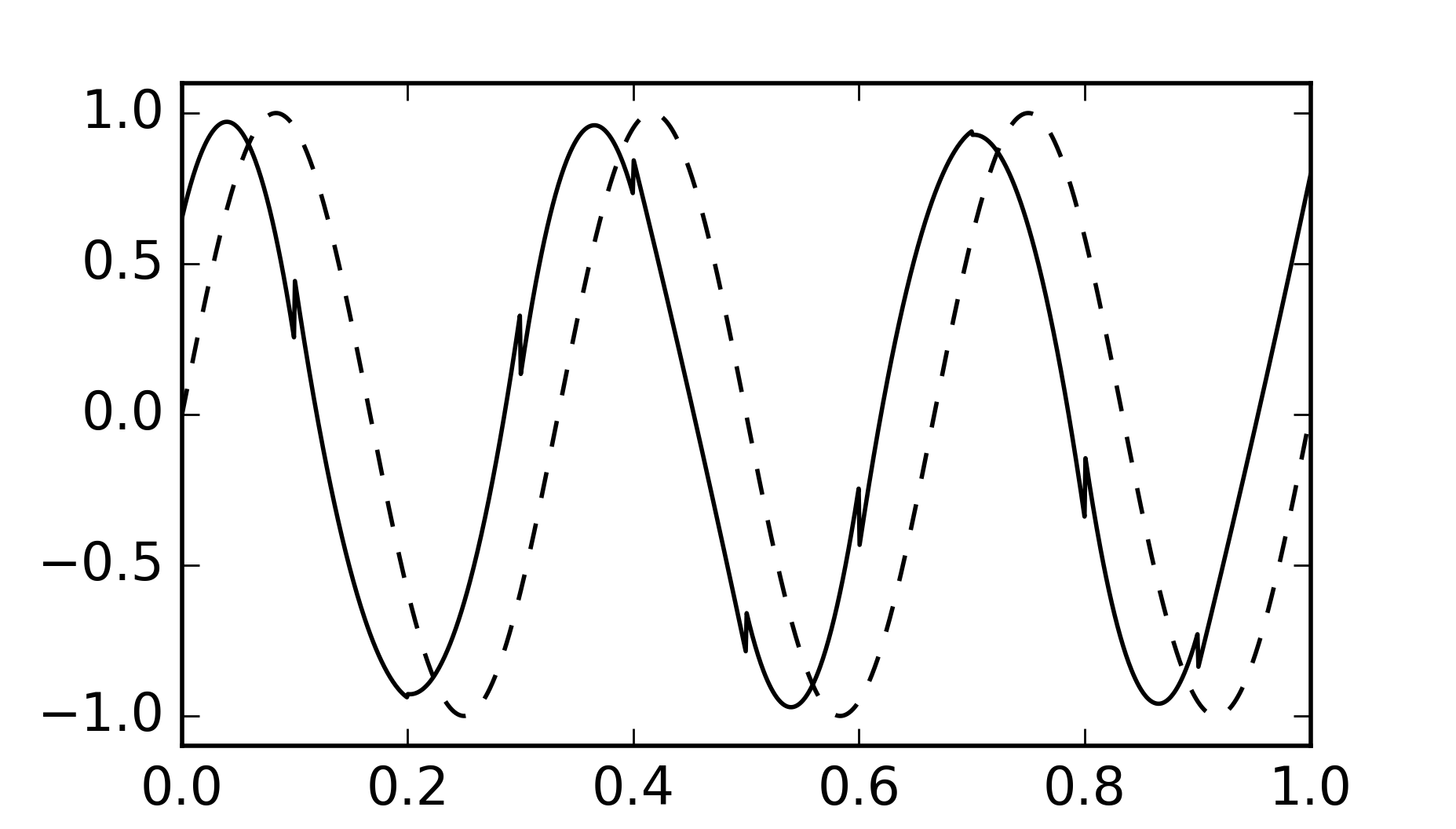}
 \includegraphics[width=.45\textwidth]{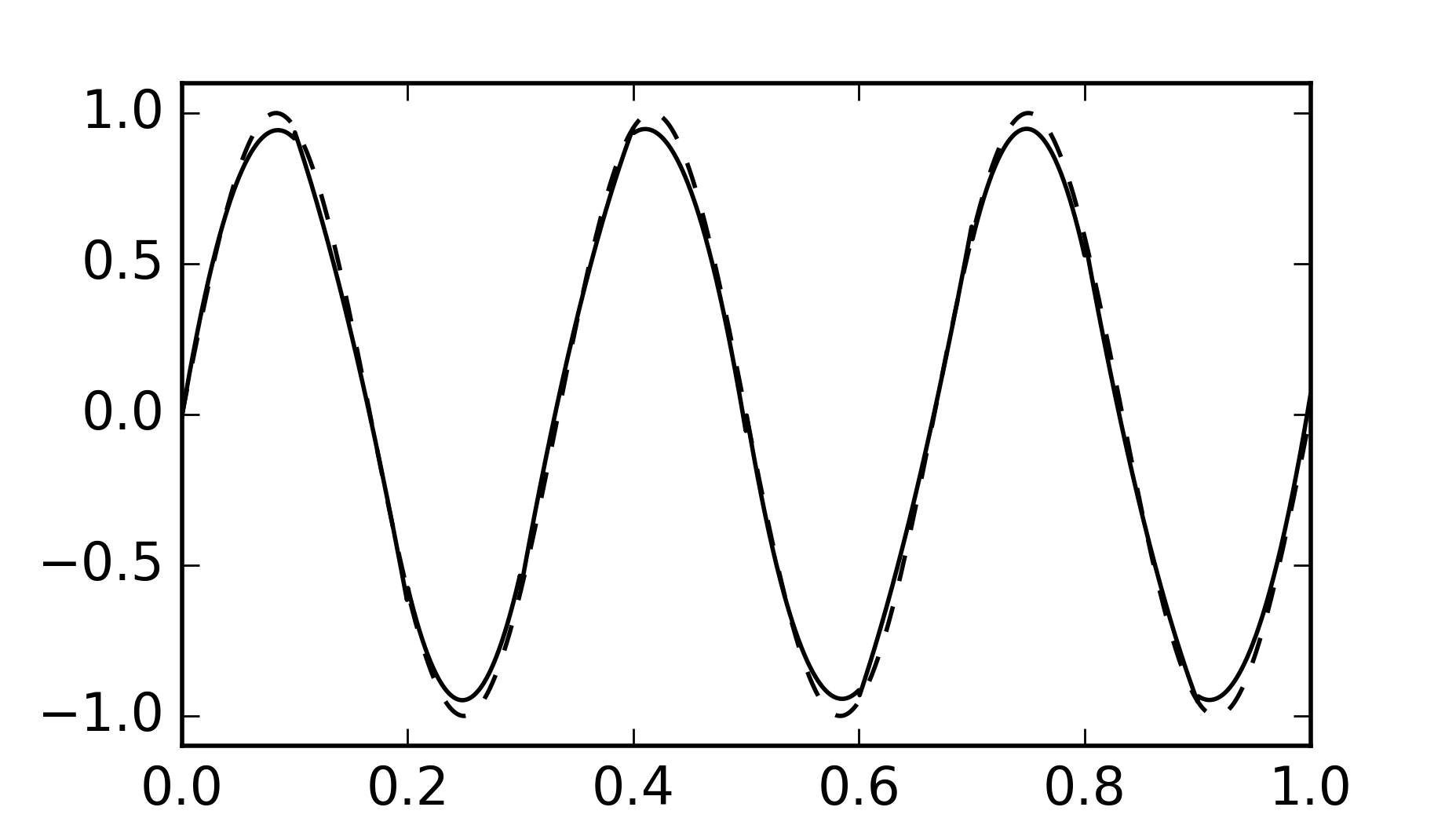}
 \includegraphics[width=.45\textwidth]{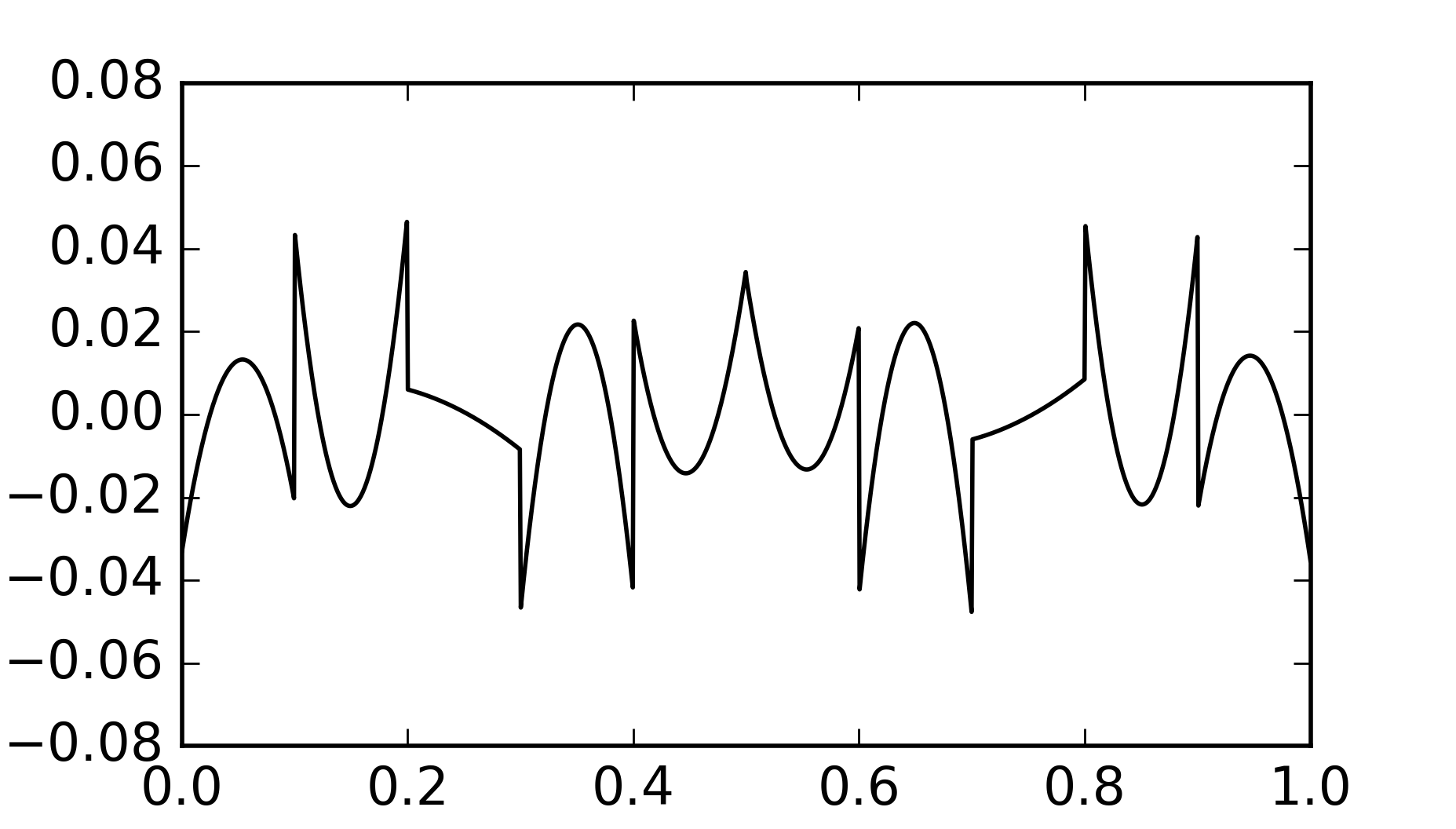}
\end{figure}

% \begin{figure}[ht!]
%  \caption{Numerical solution at $T=50$ for Example 4.5.RK3 time stepping.
%  Top left: method (U). Top right: method (C).
%  Bottom left: primal variable $u_h$ for method (A). Bottom right: 
%  auxiliary variable $\phi_h$ for method (A). 10 cells. }
%  \label{fig:pw1d2}
%  \includegraphics[width=.45\textwidth]{data/pw1d/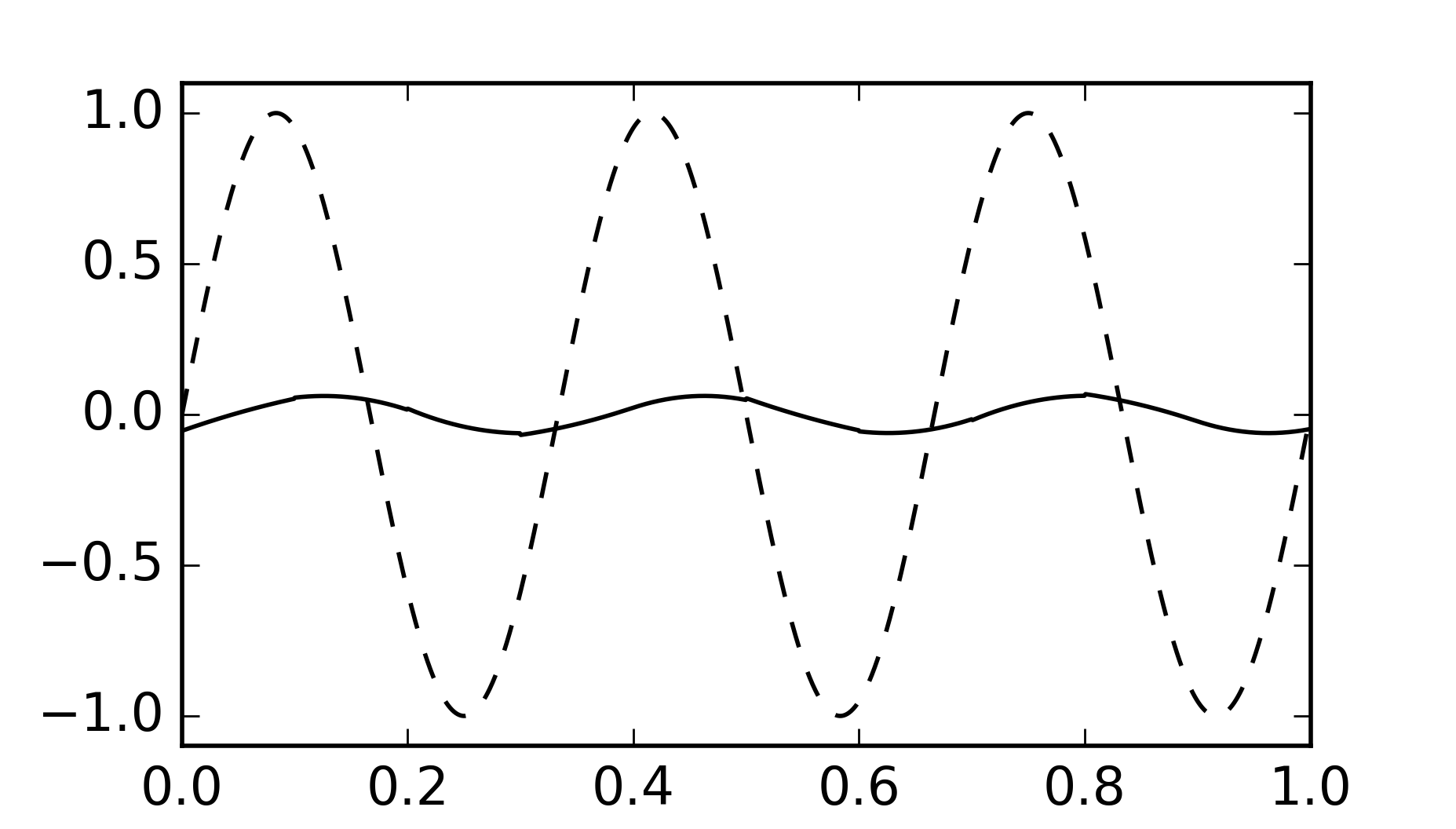}
%  \includegraphics[width=.45\textwidth]{data/pw1d/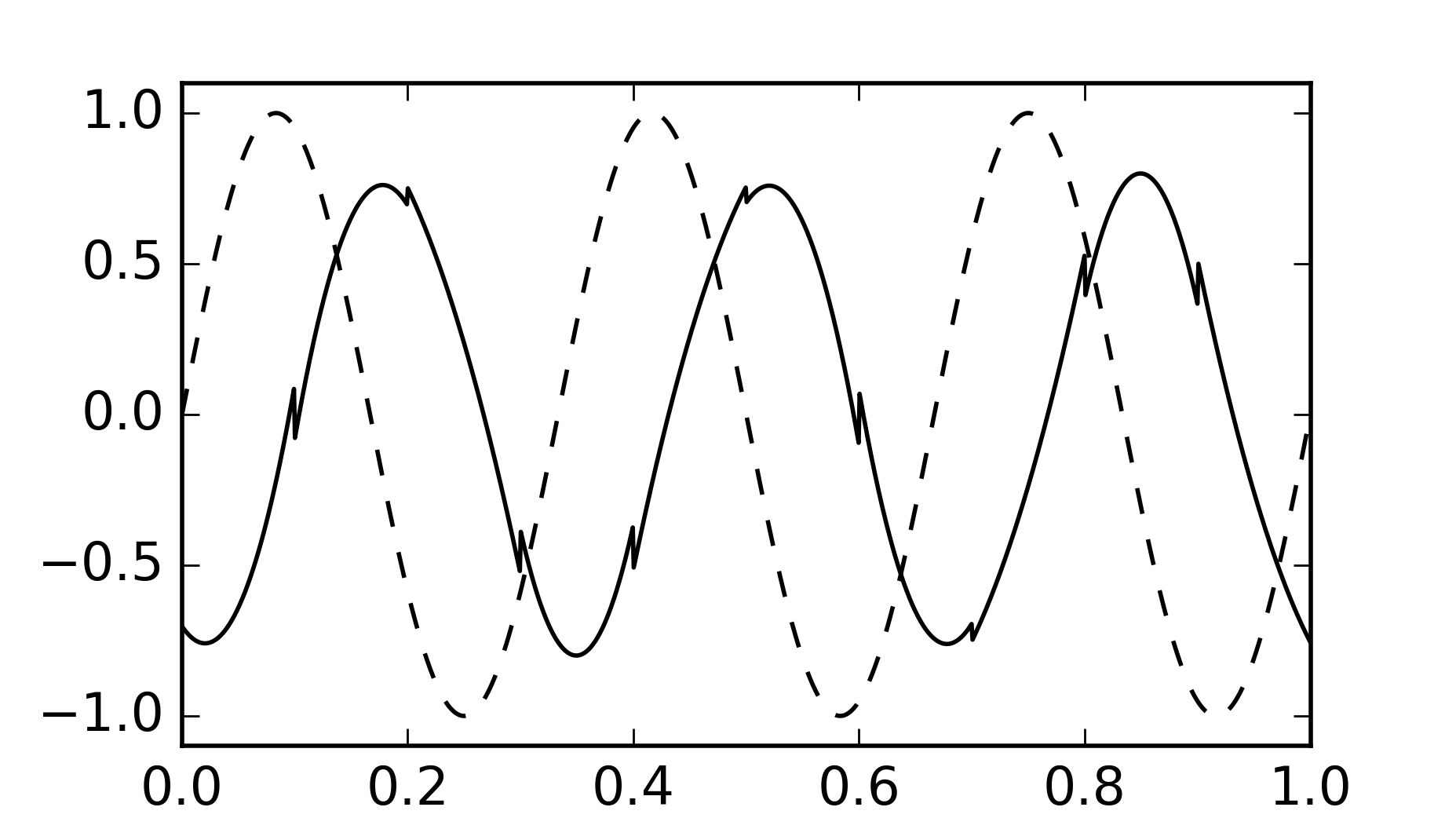}
%  \includegraphics[width=.45\textwidth]{data/pw1d/auxrku5.png}
%  \includegraphics[width=.45\textwidth]{data/pw1d/auxrkp5.png}
% \end{figure}

% Since the spatial discretization for the methods (C) and (A) are energy-conserving, 
We also present the 
numerical results in Figure \ref{fig:pw1d3} 
% ($T=10$) and Figure \ref{fig:pw1d4} ($T=50$)
for two energy-conserving methods (C) and (A) 
using the energy-conserving LF4 time integration. 
% We note that the upwind method (U) is 
% unstable for the LF4 time integration.
Numerical dissipation is not visible from the figures.
But again, we observe large phase error for the central method (C), and small 
phase error for the new method (A).
\begin{figure}[ht!]
 \caption{Numerical solution at $T=10$ for Example 4.4. LF4 time stepping.
 Left : method (C).  Right: method (A)
%  Bottom left: primal variable $u_h$ for method (A). Bottom right: 
%  auxiliary variable $\phi_h$ for method (A). 
 Solid line: numerical solution. Dashed line: exact solution.
  DG-$P^2$ space,  $10$ cells.}
 \label{fig:pw1d3}
 \includegraphics[width=.45\textwidth]{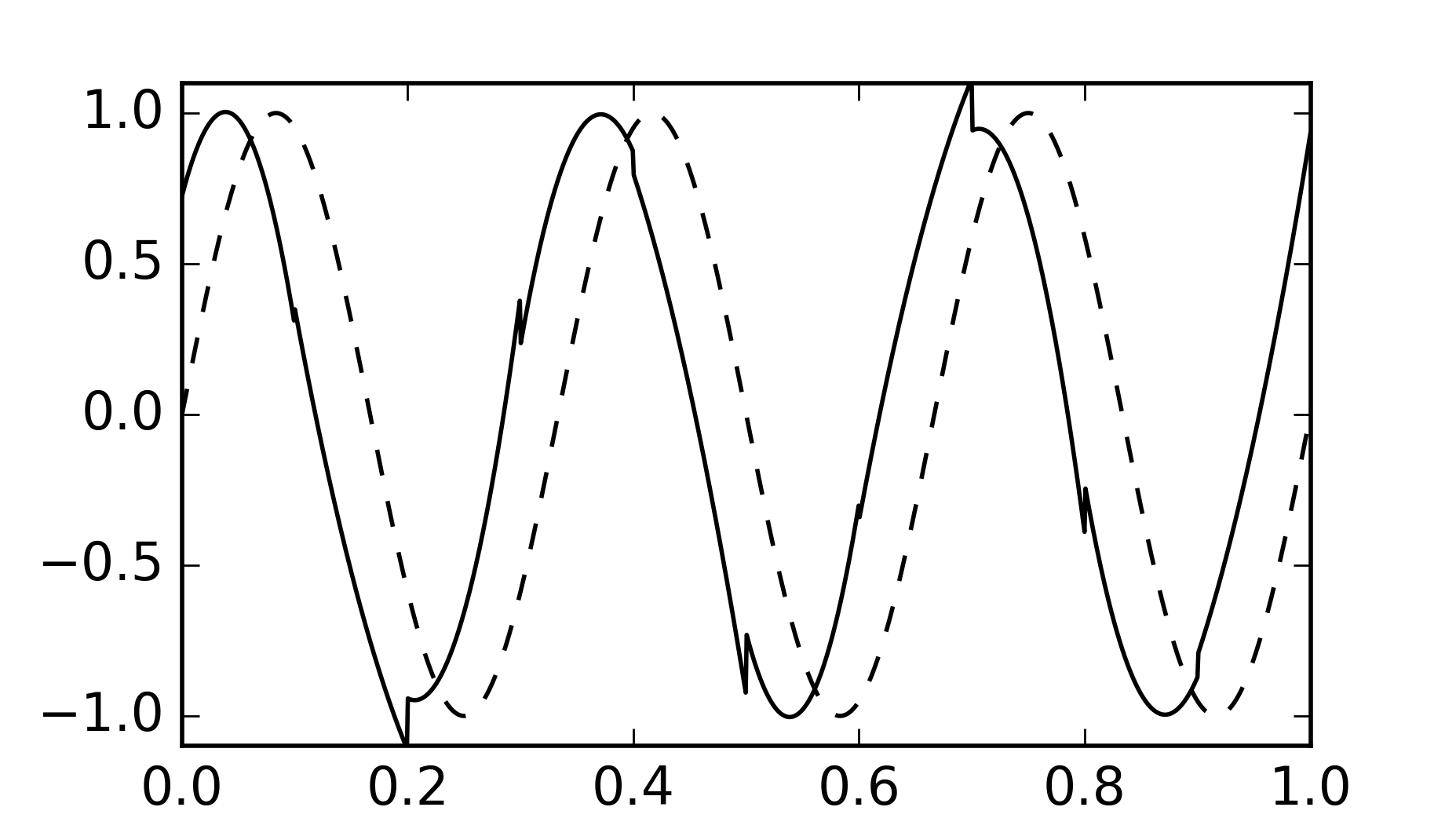}
 \includegraphics[width=.45\textwidth]{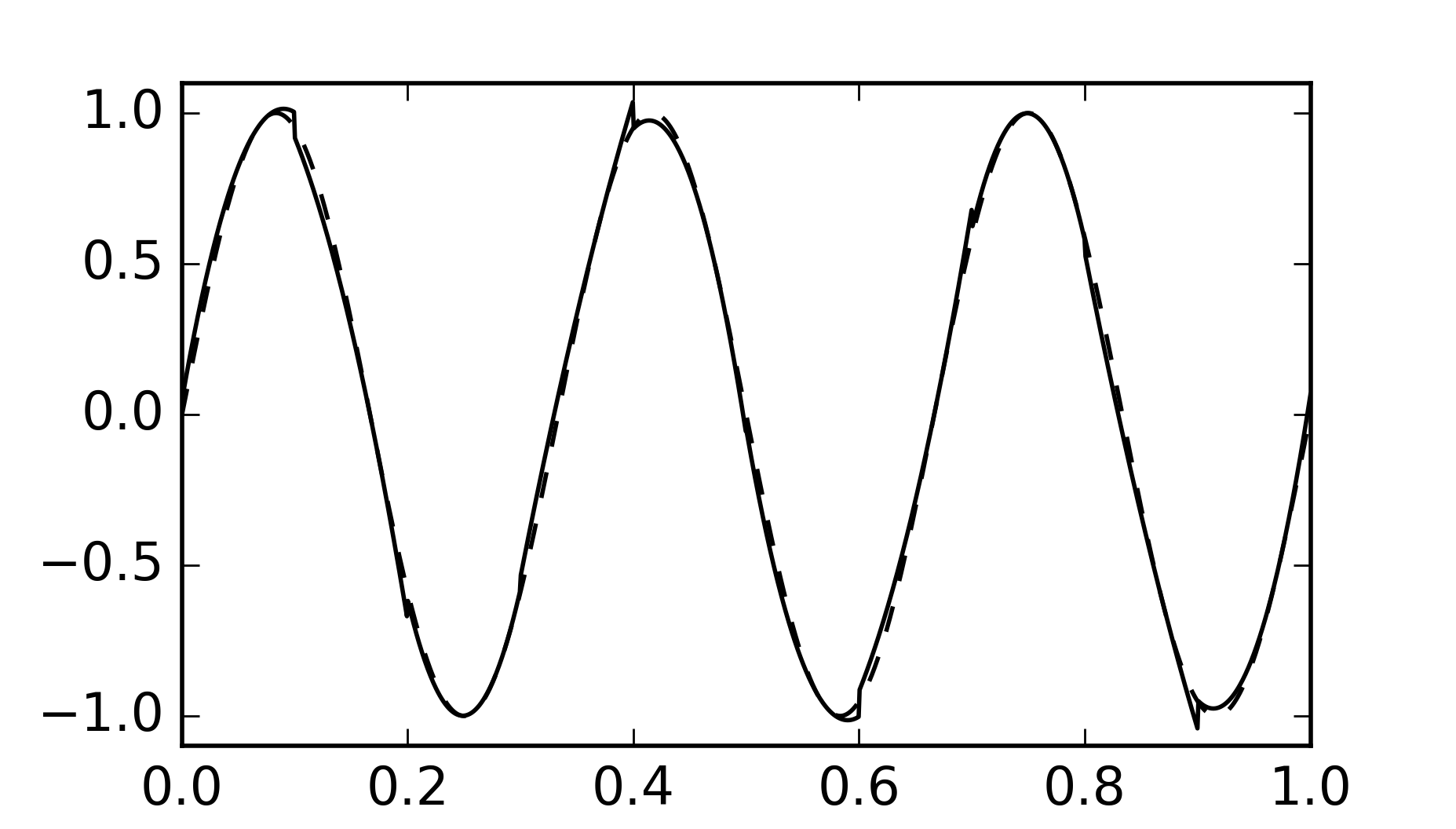}
\end{figure}

% \begin{figure}[ht!]
%  \caption{Numerical solution at $T=50$ for Example 4.4. LF4 time stepping.
%   Left : method (C).  Right: method (A)
% %  Top : method (C).
% %  Bottom left: primal variable $u_h$ for method (A). Bottom right: 
% %  auxiliary variable $\phi_h$ for method (A). 
%   Solid line: numerical solution. Dashed line: exact solution.
% 10 cells. }
%  \label{fig:pw1d4}
%  \includegraphics[width=.45\textwidth]{data/pw1d/centralleapfrogu5.png}
%  \includegraphics[width=.45\textwidth]{data/pw1d/auxleapfrogu5.png}
% % \includegraphics[width=.45\textwidth]{data/pw1d/auxleapfrogp5.png}
% \end{figure}

\subsection*{Example 4.5: long time simulation: advection of a Gaussian pulse}
We consider the advection equation \eqref{eq1} on the unit interval with periodic boundary condition and 
initial condition $u(x,0) = \exp(-200(x-.5)^2)$. 
% The exact solution is 
% \[
%  u(x,t) = \left\{
%  \begin{tabular}{cc}
% $\exp(-200(x-.5-t)^2)$ & if $x>t$,\\[.2ex]
% $\exp(-200(x+.5-t)^2)$ & if $x<t$,\\
%  \end{tabular}
% \]

Again, we use the above mentioned three DG methods with quadratic polynomial space ($k=2$).
% Again, we use quadratic $P^2$-DG methods with the above mentioned three numerical fluxes.
% Again, we denote the upwinding flux as (U), 
% the central flux as (C), and the numerical flux \eqref{flux:opt} for the augmented 
% system \eqref{scheme:adv1d} as (A).
% It is known that all three methods are optimally convergent on uniform meshes.
We take a uniform mesh with $N = 20$ cells, so there are a total of 60 
degrees of freedom, which can roughly resolve 
waves frequency up to $k = 24\pi$.

The numerical results at time $T=40$ (wave propagated 40 cycles)
of the three DG methods using RK3 time stepping
are shown in  Figure \ref{fig:gs1d1}.
From this figure, 
we observe large dissipation error for the upwinding method (U), 
large dispersion error for the central method (C), 
and relatively the smallest dissipation and dispersion errors for the new method (A).

\begin{figure}[ht!]
 \caption{Numerical solution at $T=40$ for Example 4.5. RK3 time stepping.
 Top left: method (U). Top right: method (C).
 Bottom left: primal variable $u_h$ for method (A). Bottom right: 
 auxiliary variable $\phi_h$ for method (A). 
  Solid line: numerical solution. Dashed line: exact solution.
  DG-$P^2$ space,  $20$ cells.}
 \label{fig:gs1d1}
 \includegraphics[width=.45\textwidth]{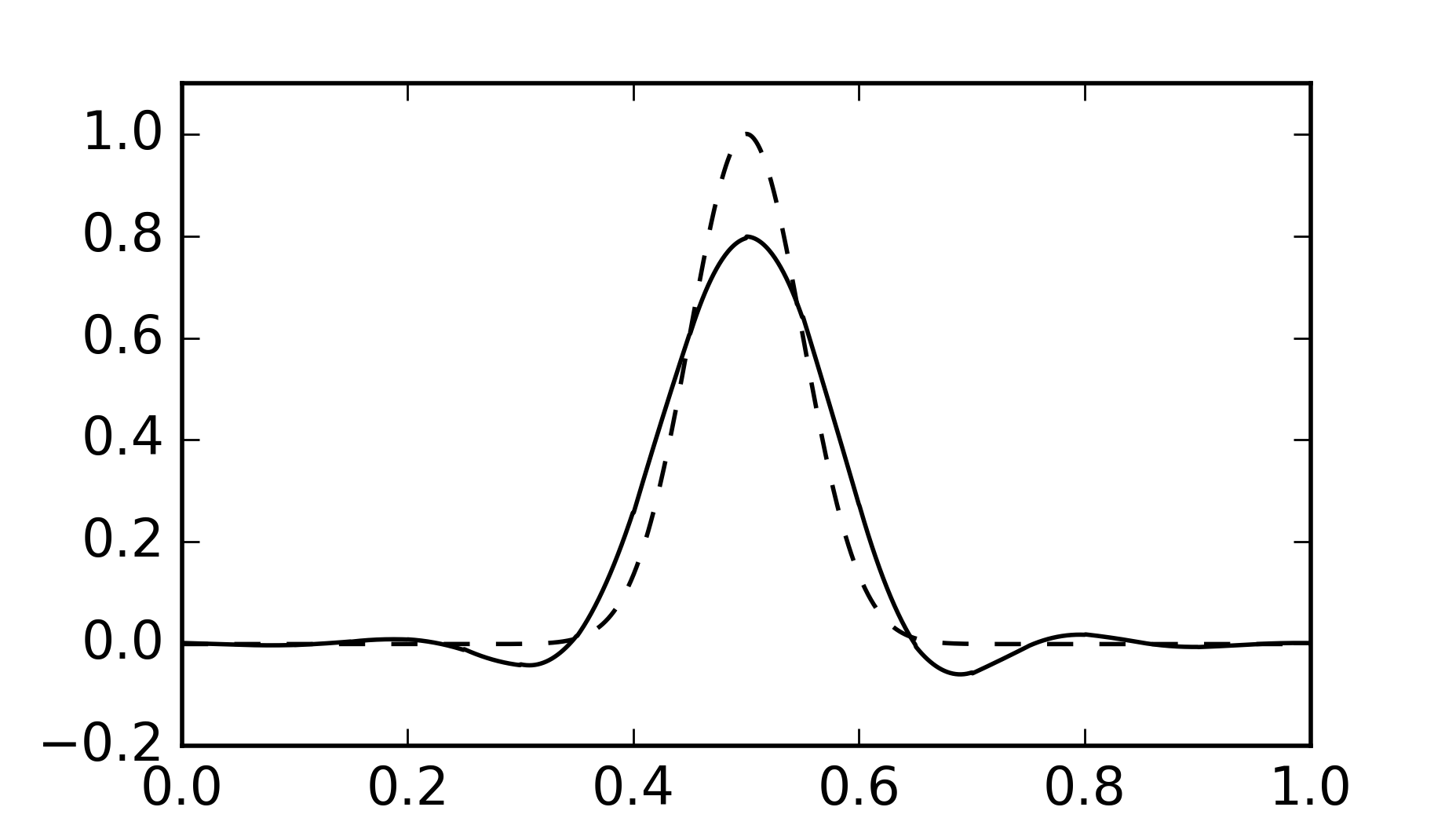}
 \includegraphics[width=.45\textwidth]{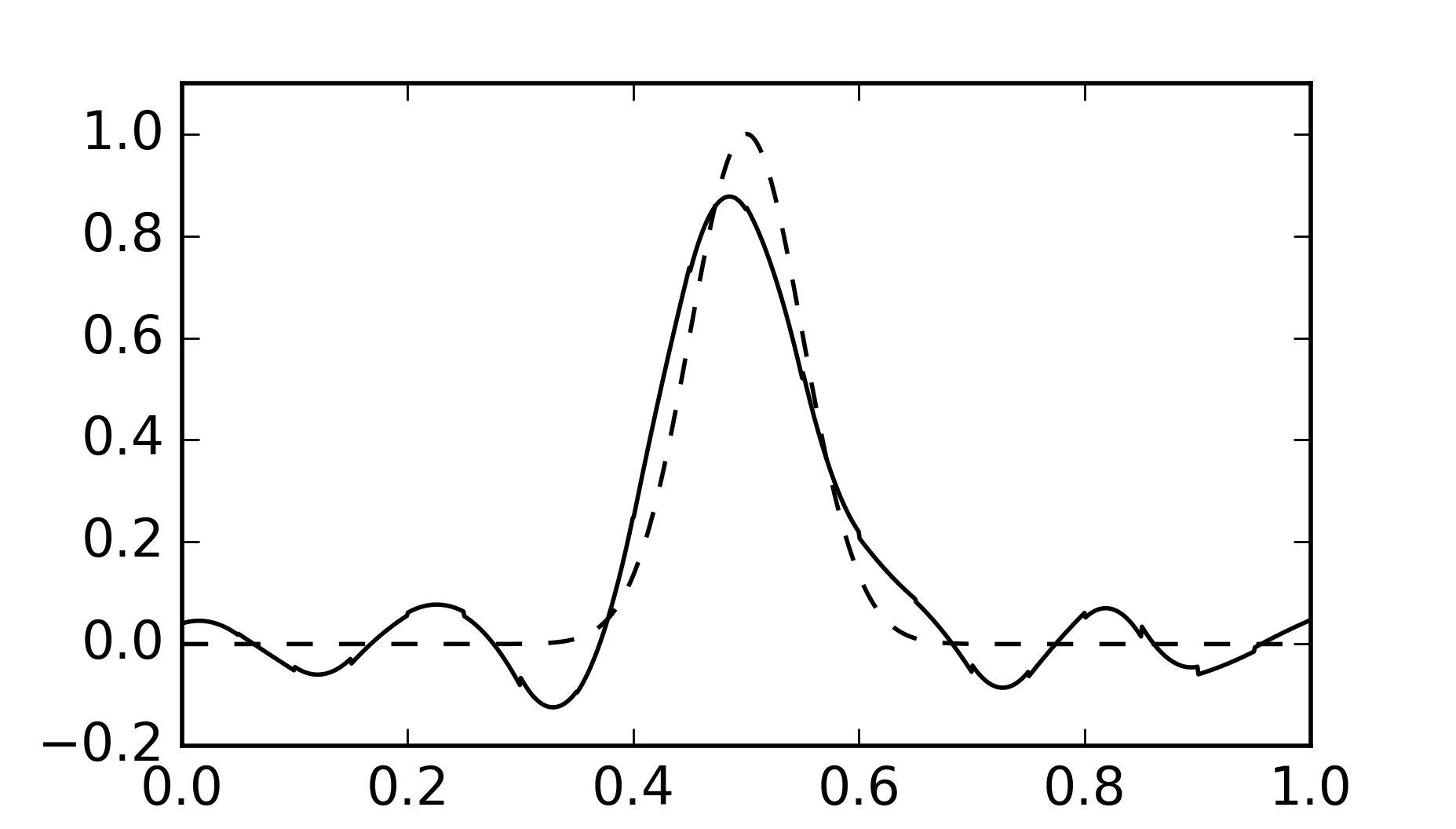}
 \includegraphics[width=.45\textwidth]{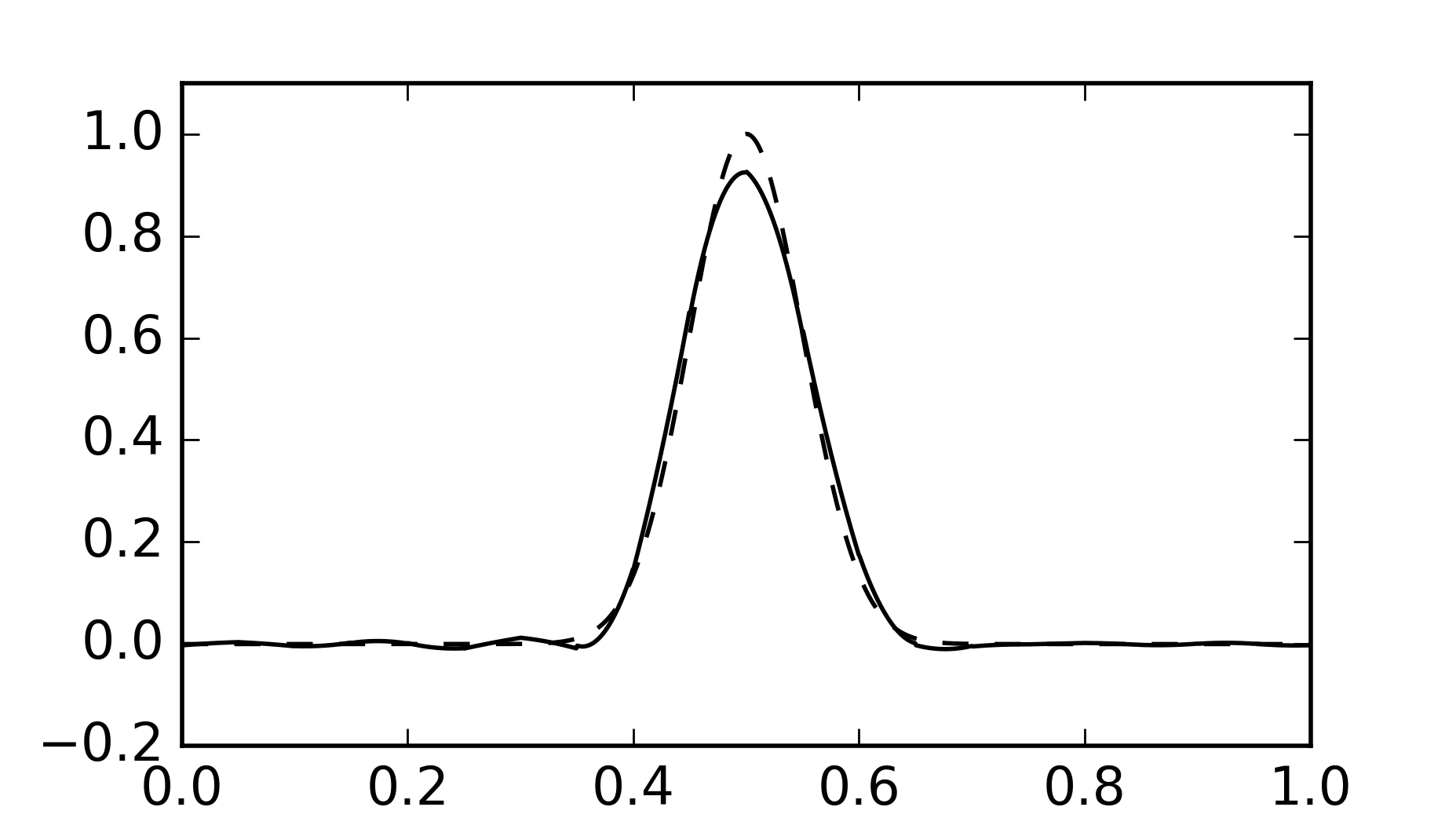}
 \includegraphics[width=.45\textwidth]{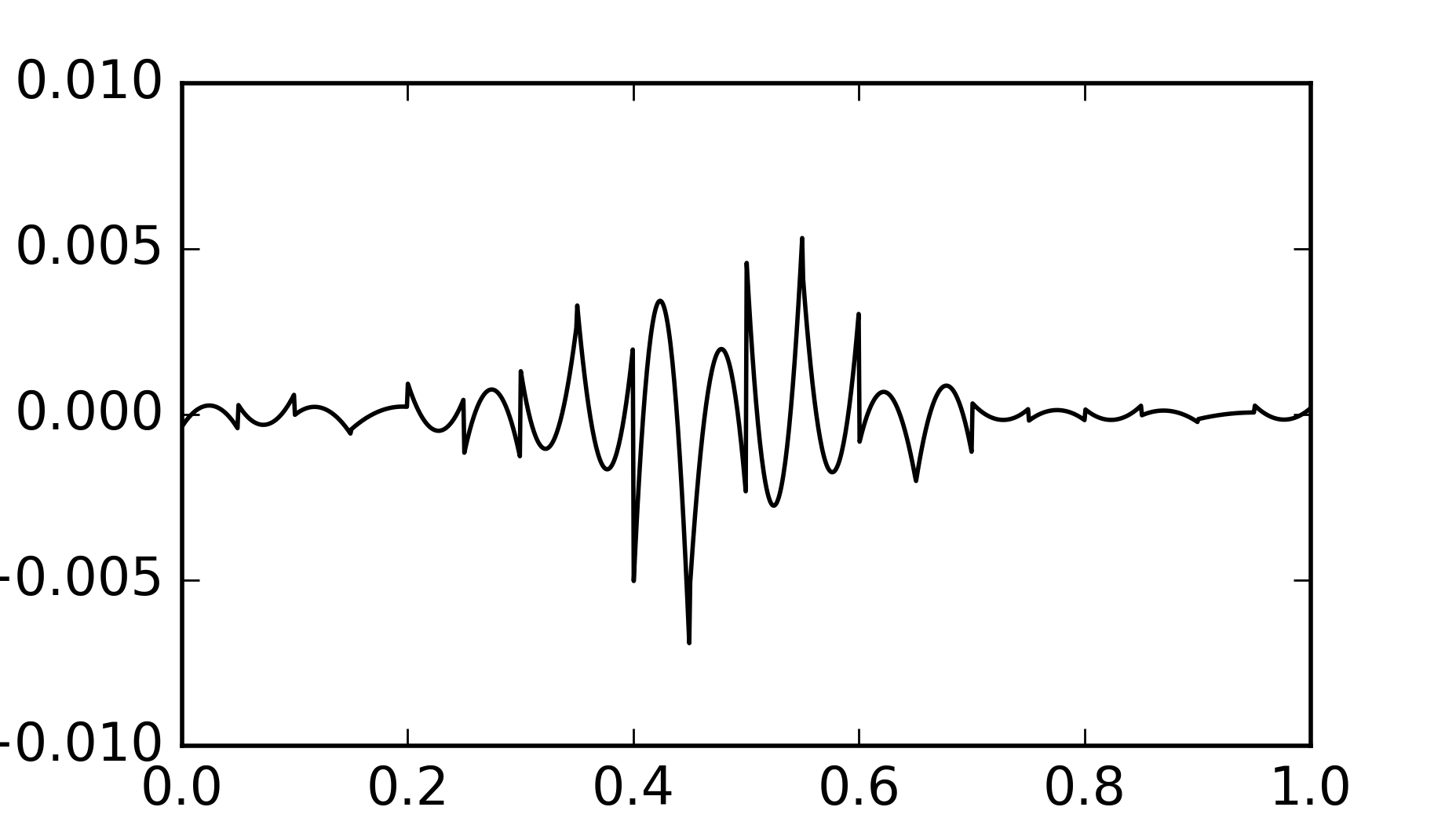}
\end{figure}

% Since the spatial discretization for the methods (C) and (A) are energy-conserving, 
We also present the 
numerical results at time $T=40$ in Figure \ref{fig:pw1d3} 
for the two energy-conserving methods (C) and (A) 
using the energy-conserving LF4 time integration.
% Numerical dissipation is not visible from the figures.
This time, we observe a larger 
dispersion error for both methods, with the dissipation error for (A) sightly reduced. 
% with a larger error 
% for the central method (C) than for the new method (A).
\begin{figure}[ht!]
 \caption{Numerical solution at $T=40$ for Example 4.6. LF4 time stepping.
   Left : method (C).  Right: method (A)
%  Top : method (C).
%  Bottom left: primal variable $u_h$ for method (A). Bottom right: 
%  auxiliary variable $\phi_h$ for method (A). 
  Solid line: numerical solution. Dashed line: exact solution.
  DG-$P^2$ space,  $20$ cells.}
 \label{fig:gs1d3}
 \includegraphics[width=.45\textwidth]{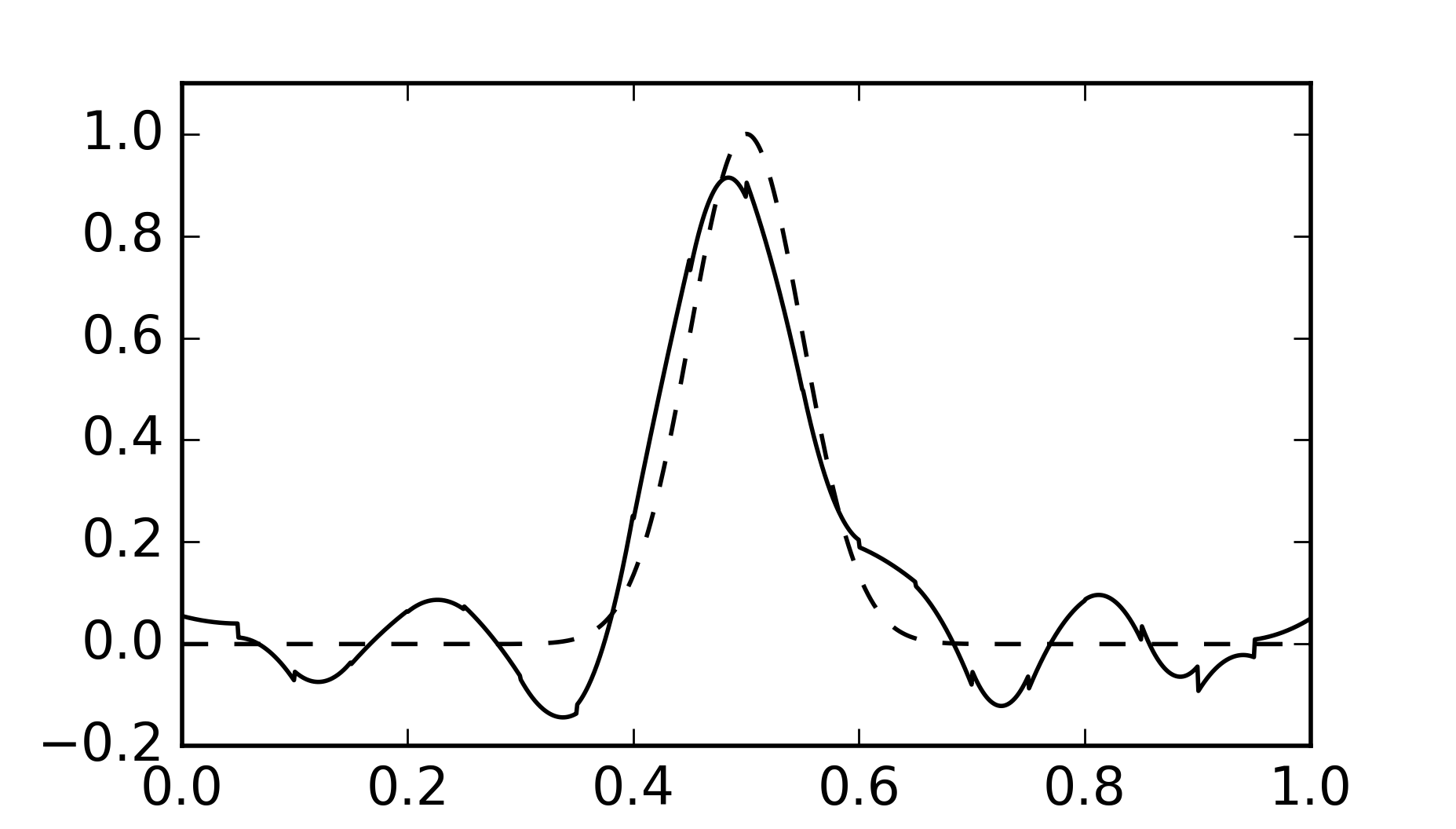}
 \includegraphics[width=.45\textwidth]{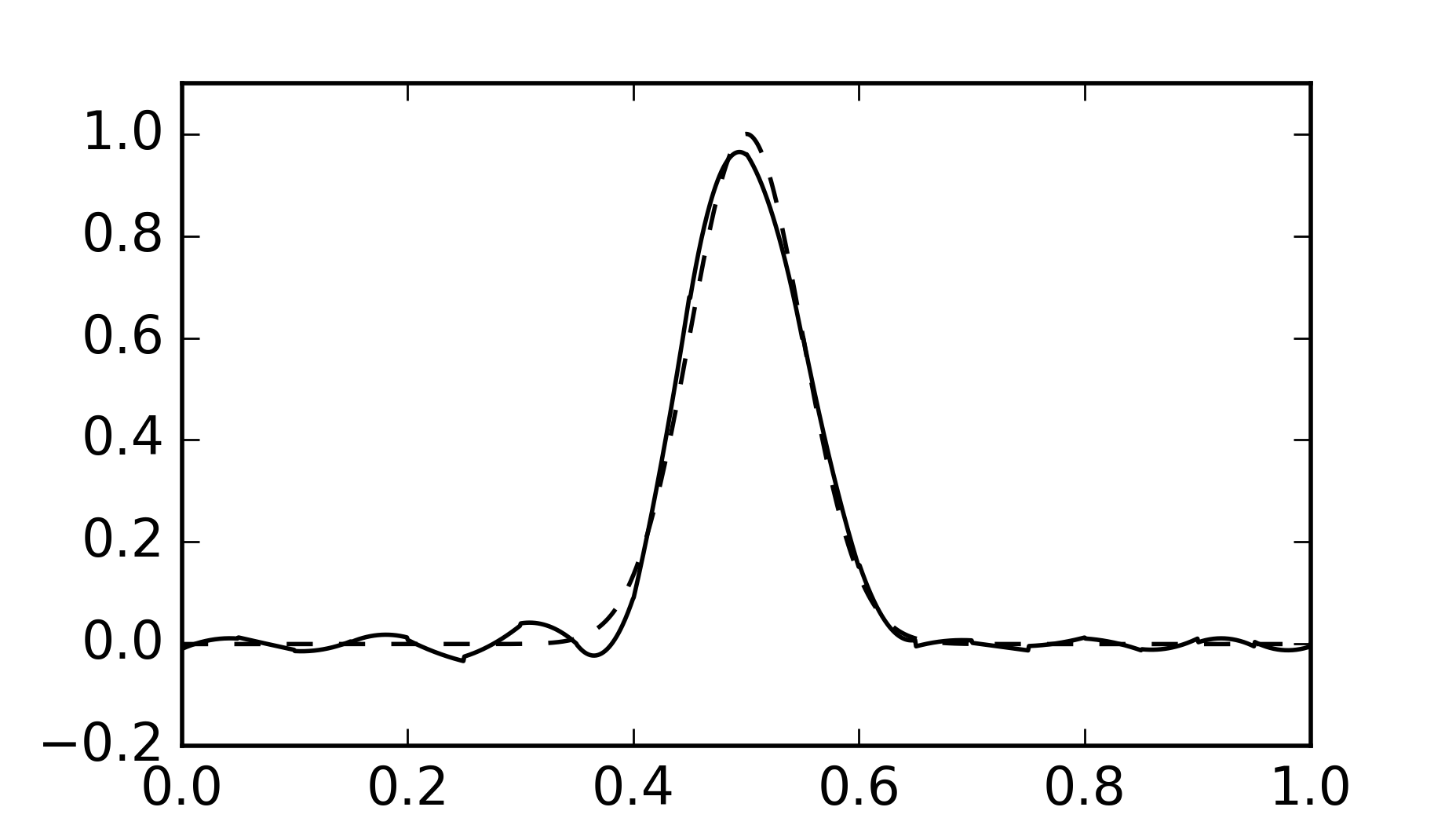}
\end{figure}

\subsection*{Example 4.6: long time simulation: spherical wave problem}
This is our our last one-dimensional example. 
We consider the following spherical wave problem, which was one of the 
benchmark problems proposed in the first computational aeroacoustics workshop \cite{CAA1},
\[
 u_t + u/r + u_r = 0
\]
over the domain $5\le r\le 450$, with initial condition $u(x,0) = 0$. The boundary condition at 
$r=5$ is 
\[
 u(5, t) = \sin(\omega t),\quad \text{ with $\omega = \pi/3$}.
\]
% with $\omega = \pi/3$.
Exact solution is 
\begin{align*}
 u(r,t) = \left\{
 \begin{tabular}{cc}
  $0$ & $r> t+5$\\[.2ex]
  $\frac5r[\sin(\omega(t-r+5))]$ & $r\le t+5.$  
 \end{tabular}
 \right.
\end{align*}

Again, we use the above mentioned three DG methods with quadratic polynomial space ($k=2$).
Here we mention that although there is a source term in this equation,
the auxiliary {\it zero} variable for the method (A) still solve the equation 
\[
 \phi_t - \phi_r = 0.
\]
We take a uniform mesh with $N = 250$ cells, so there are about 10 degrees of freedom per wavelength.

The numerical results at time $T=400$ (wave propagated 50 cycles)
along the segment $350\le r\le 430$
of the three DG methods using RK3 time stepping
are shown in  Figure \ref{fig:sp1d1}.
From this figure, 
we observe large dissipation error for the upwinding method (U), 
large dissipation error and phase shift for the central method (C), 
and relatively the smallest dissipation error and phase shift for the new method (A).

\begin{figure}[ht!]
 \caption{Numerical solution for  $350\le r\le 430$
 at $T=400$ for Example 4.6. RK3 time stepping.
 Top: method (U). 
 Middle: method (C).
 Bottom: method (A).
  Solid line: numerical solution. Dashed line: exact solution.
DG-$P^2$ space,  $250$ cells.
}
 \label{fig:sp1d1}
 \includegraphics[width=0.8\textwidth]{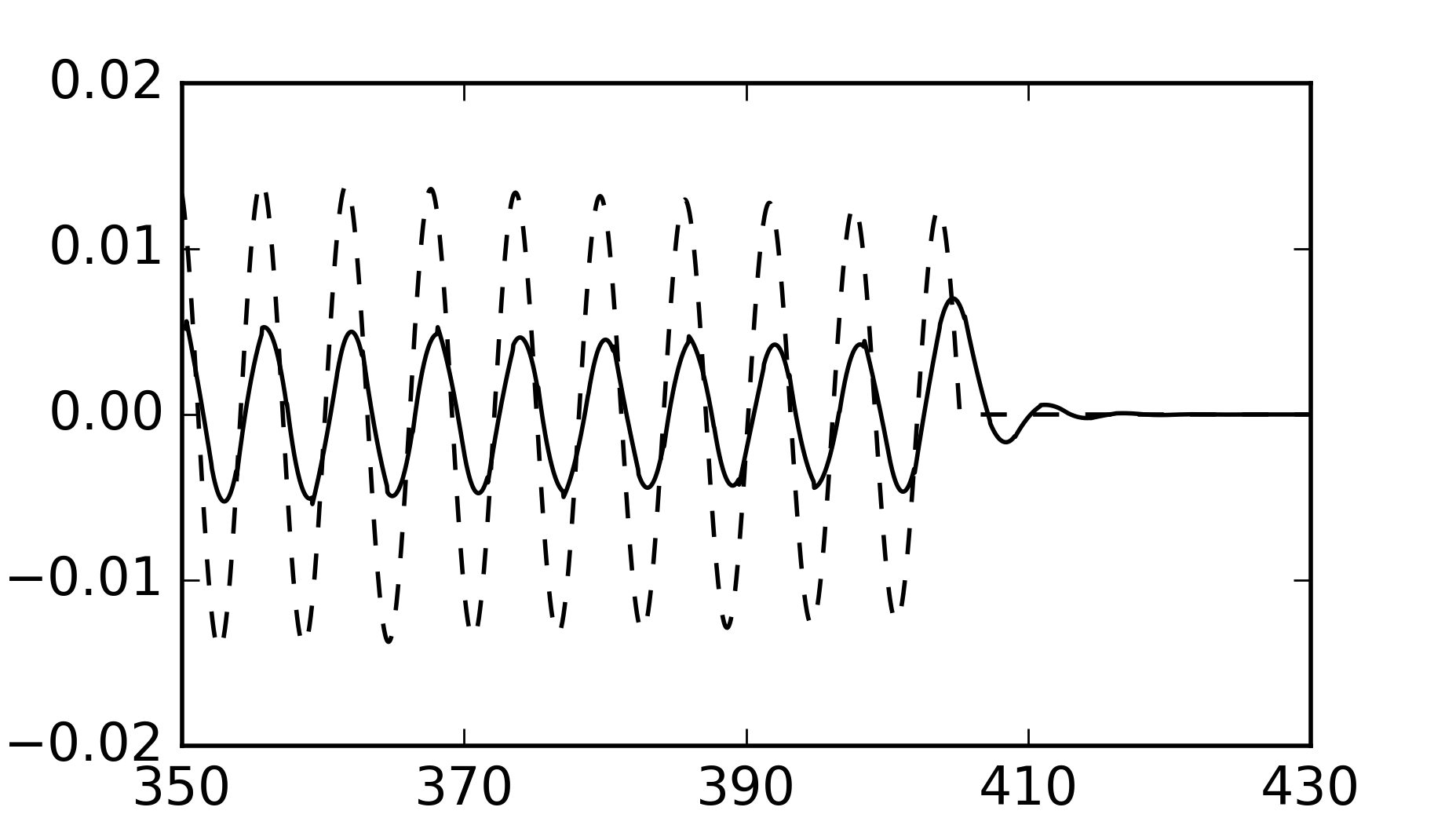}
 \includegraphics[width=0.8\textwidth]{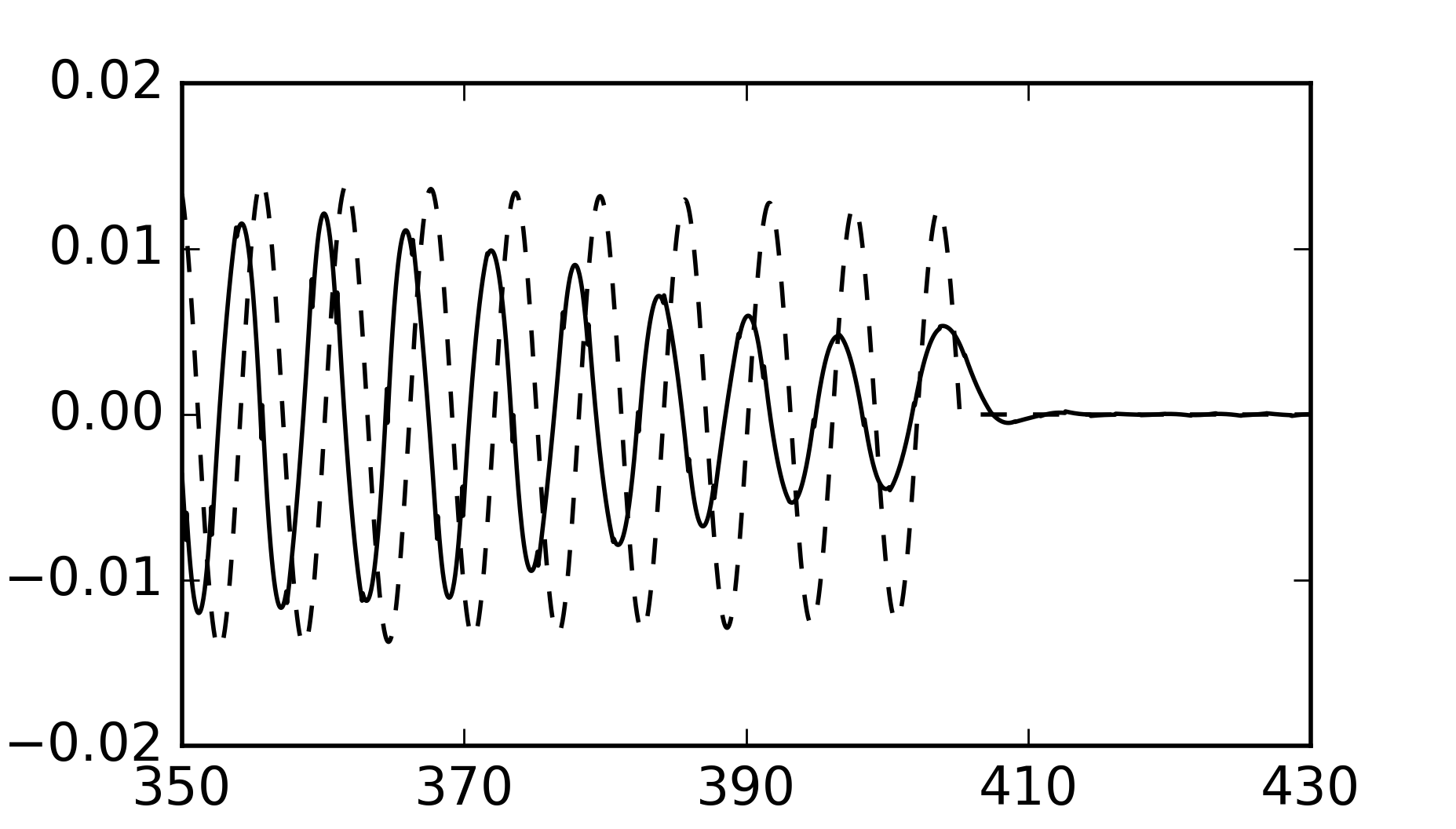}
 \includegraphics[width=0.8\textwidth]{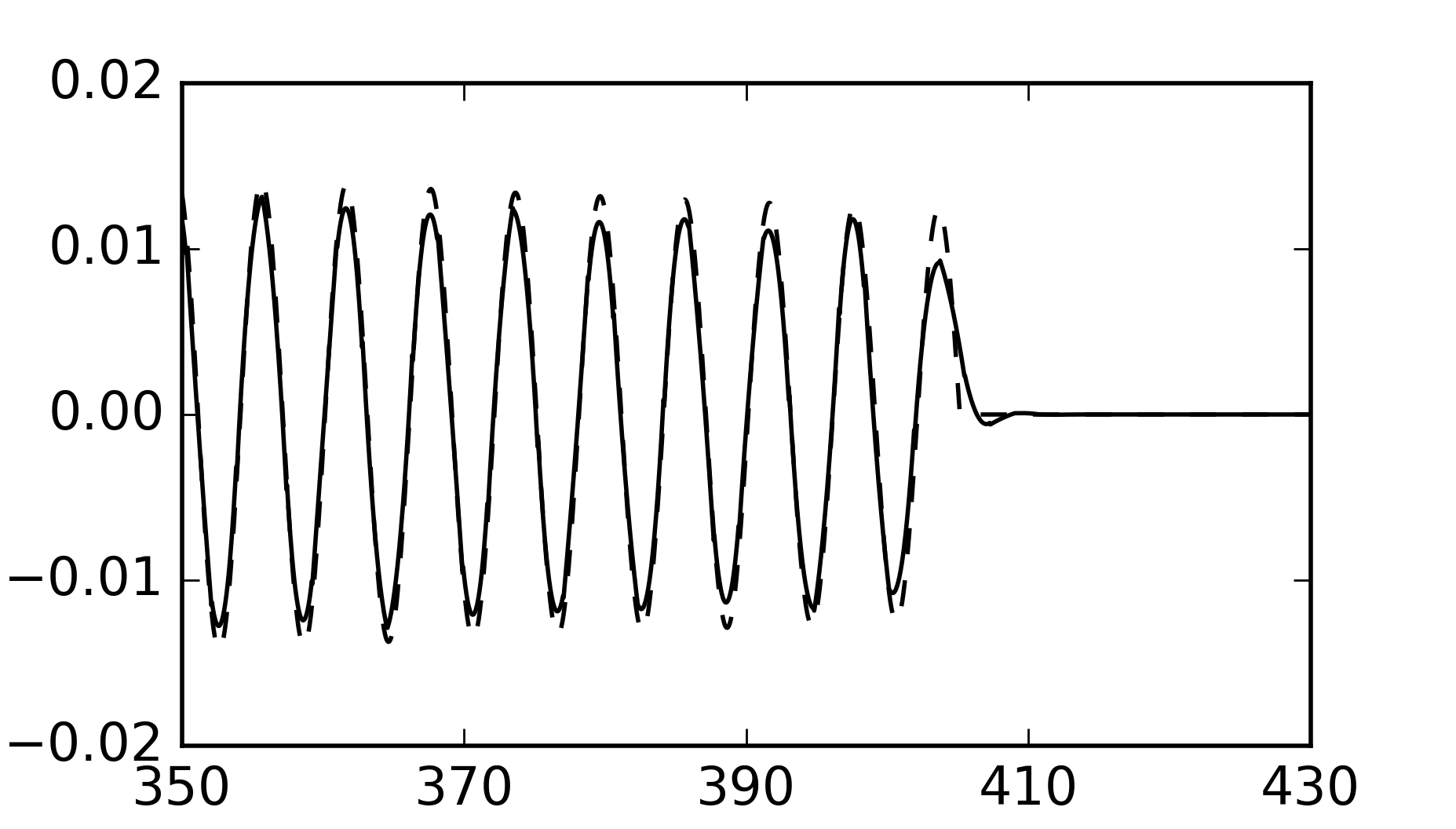}
\end{figure}

\subsection*{Example 4.7: 2D advection with periodic boundary condition}
We consider the following advection equation
\begin{align}
\label{eq-ad2}
 u_t + u_x +u_y = 0
\end{align}
on a unit square $\Omega=[0,1]\times [0,1]$ with 
initial condition $u(x,0) = \sin(2\pi (x+y))$, and a periodic boundary condition.
The exact solution is 
\begin{align*}
u(x, t) = \sin(2\pi (x+y-2t)).
\end{align*}

We use the three DG methods, (U) for upwinding flux, (C) for central flux, and (A) for the 
new method in section \ref{sec:adv2d}.
% In order to reduce time error,
% we take the 6-stage 6th order Lax-Wendroff time stepping \eqref{rk} with 
% $r = 6$.
% The time step size is taken be to 
% be $\Delta t = C\!F\!L\,h$ with  
% $C\!F\!L = 0.1$ for $P^1$, $C\!F\!L = 0.05$ for $P^2$, $C\!F\!L = 0.03$ for $P^3$.
The methods are tested on both nonuniform rectangular meshes and 
unstructured triangular meshes; see Figure \ref{fig:mesh2d} for a coarse mesh.
% \begin{itemize}
%  \item [(M1)] A nonuniform rectangular mesh which is a $10\%$ random perturbation of the 
% uniform square mesh.
%  \item [(M2)] A non-rectangular quadrilateral mesh which is a $10\%$ random perturbation of the 
% uniform square mesh.
%  \item [(M3)] An unstructured triangular mesh.
% \end{itemize}
\begin{figure}
 \caption{The coarse meshes. Left:  nonuniform rectangular mesh.
 Right: unstructured triangular mesh.}
\label{fig:mesh2d}
  \includegraphics[width=.49\textwidth]{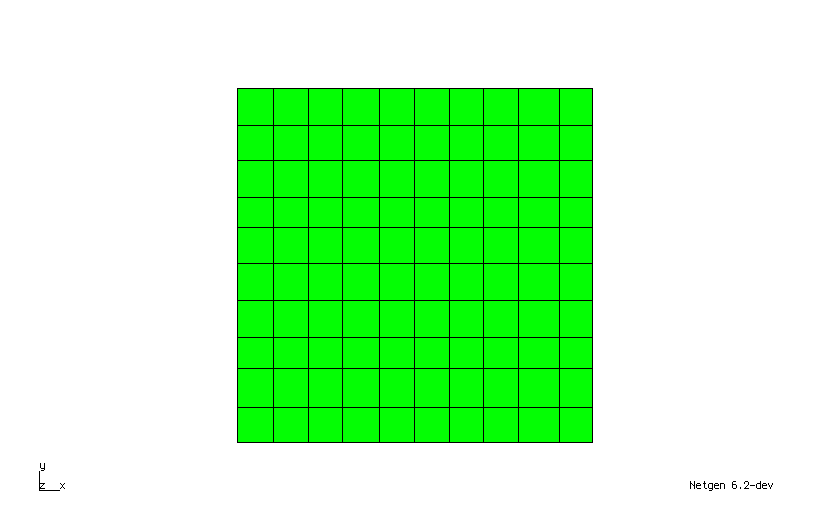}
 \includegraphics[width=.49\textwidth]{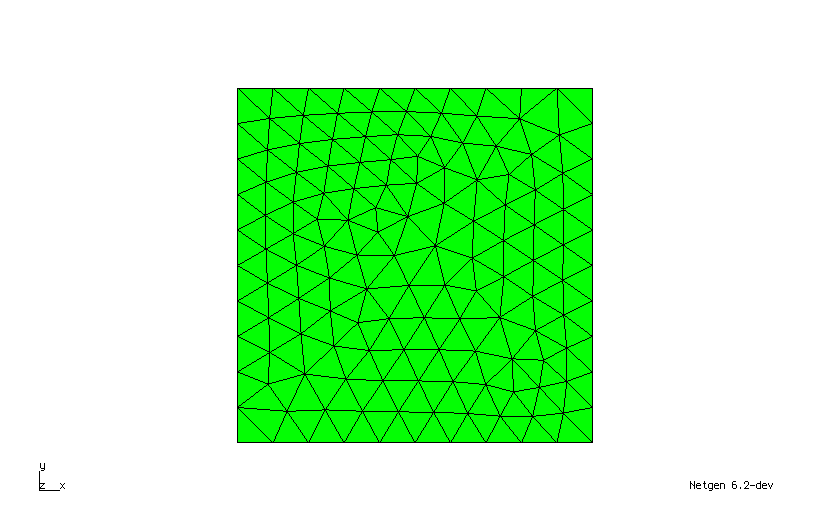}
\end{figure}
Table \ref{table:adv2d1}
lists the numerical errors and their orders for the above three DG methods at $T=0.1$
on the rectangular meshes. And Table \ref{table:adv2d3}
lists the errors and orders on the triangular meshes.
We use $Q^k/P^k$ polynomials with $1\le k\le 3$ for rectangular/triangular meshes.

From the tables, we observe 
optimal $(k+1)$th order of accuracy for both the variable $u_h$ (which approximate the solution $u$)
and $\phi_h$ (which approximate the {\it zero} function)
for the new energy-conserving DG method (A). 
The optimal convergence on rectangular meshes is 
understood; see Remark \ref{rk-error}. 
% supported by 
% our convergence result in Theorem \ref{thm:sys2dX}. 
But we do not have a theoretical proof 
for such optimality on the triangular meshes.
Similar to the 1D case in Example 4.1, 
the absolute value of the error for method (A) is slightly smaller than
the optimal-convergent upwinding DG method (U) 
for all polynomial degree.
We observe suboptimal convergence of order $k$ for the  method (C) for odd polynomial 
degree on both meshes. 
We also observe optimal convergence rate $3$ for the method (C) using $P^2$ space on the triangular meshes, 
but a suboptimal convergence rate of about $2.5$
on the nonuniform rectangular mesh.

\begin{table}[htbp]
\caption{\label{table:adv2d1} 
The $L^2$-errors
and orders for Example 4.7 for the upwinding DG method (U), the central DG method (C), and the 
new DG method (A) on a non-uniform rectangular mesh of $N\times N$ cells. 
$T =0.1$. 
} \centering
\bigskip
\begin{tabular}{|c|c|cc|cc|cc|cc|}
\hline
 & &\multicolumn{2}{|c|}{(U)}&\multicolumn{2}{c|}{(C)}&\multicolumn{4}{c|}{(A)}\\
 \hline
%  \cline{3-4}   \cline{6-7} \cline{9-12}
 &  {$N$}      & $\|u-u_h\|$ & Order & $\|u-u_h\|$ & Order &  $\|u-u_h\|$ & Order 
 &  $\|\phi_h\|$  & Order\\
\hline
\multirow{4}{*}{$P^1$}
 & 10 & 2.40e-02  &  -0.00 & 4.29e-02  &  -0.00 & 1.62e-02  &  -0.00 & 2.39e-02  &  -0.00 \\ 
 & 20 & 6.15e-03  &  1.97 & 1.86e-02  &  1.21 & 3.85e-03  &  2.07 & 5.35e-03  &  2.16 \\ 
 & 40 & 1.55e-03  &  1.99 & 8.88e-03  &  1.06 & 9.52e-04  &  2.02 & 1.26e-03  &  2.09 \\ 
\hline
\multirow{4}{*}{$P^2$}
 & 10 & 1.28e-03  &  -0.00 & 1.94e-03  &  -0.00 & 8.47e-04  &  -0.00 & 1.30e-03  &  -0.00 \\ 
 & 20 & 1.61e-04  &  2.99 & 4.48e-04  &  2.12 & 1.05e-04  &  3.02 & 1.35e-04  &  3.27 \\ 
 & 40 & 2.02e-05  &  3.00 & 8.04e-05  &  2.48 & 1.30e-05  &  3.01 & 1.53e-05  &  3.14 \\ 
\hline
\multirow{3}{*}{$P^3$}
 & 10 & 5.10e-05  &  -0.00 & 1.13e-04  &  -0.00 & 3.41e-05  &  -0.00 & 4.11e-05  &  -0.00 \\ 
 & 20 & 3.25e-06  &  3.97 & 1.43e-05  &  2.98 & 2.16e-06  &  3.98 & 2.51e-06  &  4.03 \\ 
 & 40 & 2.03e-07  &  4.00 & 1.77e-06  &  3.01 & 1.35e-07  &  4.00 & 1.55e-07  &  4.02 \\ 
\hline
\end{tabular}
\end{table}

\begin{table}[htbp]
\caption{\label{table:adv2d3} 
The $L^2$-errors
and orders for Example 4.7 for the upwinding DG method (U), the central DG method (C), and the 
new DG method (A) on a unstructured triangular mesh with mesh size $h = 1/N$. 
$T =0.1$. 
} \centering
\bigskip
\begin{tabular}{|c|c|cc|cc|cc|cc|}
\hline
 & &\multicolumn{2}{|c|}{(U)}&\multicolumn{2}{c|}{(C)}&\multicolumn{4}{c|}{(A)}\\
 \hline
%  \cline{3-4}   \cline{6-7} \cline{9-12}
 &  {$N$}      & $\|u-u_h\|$ & Order & $\|u-u_h\|$ & Order &  $\|u-u_h\|$ & Order 
 &  $\|\phi_h\|$  & Order\\
% \hline
% \multirow{4}{*}{$P^0$}
%  & 10 & 3.53e-01  &  -0.00 & 3.79e-01  &  -0.00 & 1.85e-01  &  -0.00 & 9.24e-02  &  -0.00 \\ 
%  & 20 & 1.91e-01  &  0.88 & 1.59e-01  &  1.26 & 9.18e-02  &  1.01 & 4.57e-02  &  1.02 \\ 
%  & 40 & 1.05e-01  &  0.86 & 1.01e-01  &  0.65 & 4.59e-02  &  1.00 & 1.43e-02  &  1.67 \\ 
%  & 80 & 5.08e-02  &  1.05 & 5.32e-02  &  0.93 & 2.29e-02  &  1.00 & 1.10e-02  &  0.38 \\ 
\hline
\multirow{4}{*}{$P^1$}
 & 10 & 1.78e-02  &  -0.00 & 2.47e-01  &  -0.00 & 1.26e-02  &  -0.00 & 1.36e-02  &  -0.00 \\ 
 & 20 & 4.63e-03  &  1.94 & 1.15e-01  &  1.10 & 3.16e-03  &  2.00 & 3.52e-03  &  1.95 \\ 
 & 40 & 1.14e-03  &  2.02 & 5.20e-02  &  1.15 & 7.76e-04  &  2.02 & 8.46e-04  &  2.06 \\ 
%  & 80 & 3.93e-04  &  2.00 & 3.00e-02  &  0.59 & 2.45e-04  &  2.01 & 3.15e-04  &  2.03 \\ 
\hline
\multirow{4}{*}{$P^2$}
 & 10 & 1.21e-03  &  -0.00 & 4.71e-03  &  -0.00 & 8.35e-04  &  -0.00 & 9.12e-04  &  -0.00 \\ 
 & 20 & 1.50e-04  &  3.01 & 5.19e-04  &  3.18 & 1.02e-04  &  3.03 & 1.09e-04  &  3.06 \\ 
 & 40 & 1.83e-05  &  3.04 & 6.24e-05  &  3.05 & 1.24e-05  &  3.04 & 1.34e-05  &  3.03 \\ 
%  & 80 & 2.65e-06  &  3.03 & 2.36e-05  &  2.57 & 1.75e-06  &  3.03 & 2.10e-06  &  3.00 \\ 
\hline
\multirow{3}{*}{$P^3$}
 & 10 & 6.45e-05  &  -0.00 & 1.06e-03  &  -0.00 & 4.40e-05  &  -0.00 & 4.67e-05  &  -0.00 \\ 
 & 20 & 4.27e-06  &  3.92 & 1.02e-04  &  3.37 & 2.93e-06  &  3.91 & 3.13e-06  &  3.90 \\ 
 & 40 & 2.36e-07  &  4.18 & 1.17e-05  &  3.13 & 1.62e-07  &  4.17 & 1.72e-07  &  4.18 \\ 
\hline
\end{tabular}
\end{table}

\subsection*{Example 4.8: 2D acoustics with periodic boundary condition, subsonic background velocity}
We consider the acoustic equations \eqref{ac2d} with  coefficients 
$\rho_0=K_0=1$, and $u_0=.5$, $v_0 = 0$, i.e.,
\begin{align*}
% \label{eq2-ac}
 p_t + .5p_x + u_x +v_y =& 0,\\
 u_t + .5u_x + p_x =& 0,\\
 v_t + .5v_x + p_y =& 0,
\end{align*}
on a unit square $\Omega=[0,1]\times [0,1]$ with 
initial condition $p(x,y, 0) = \sin(2\pi (x+y)), u(x,0)=v(x,0) = 0$, and a periodic boundary condition.
The exact solution is the following plane wave solution
\begin{align*}
p(x, y, t) = &\frac12\sin(2\pi (x+y-(\sqrt{2}+.5)t))+\frac12\sin(2\pi (x+y+(\sqrt{2}-.5)t)),\\
u(x, y, t) = &\frac{\sqrt{2}}{4}\sin(2\pi (x+y-(\sqrt{2}+.5)t))-\frac{\sqrt{2}}{4}\sin(2\pi (x+y+(\sqrt{2}-.5)t)),\\
v(x, y, t) = &\frac{\sqrt{2}}{4}\sin(2\pi (x+y-(\sqrt{2}+.5)t))-\frac{\sqrt{2}}{4}\sin(2\pi (x+y+(\sqrt{2}-.5)t)).
\end{align*}

We present numerical results with the upwinding DG method (U), central DG method (C), 
the DG method (A) for the augmented 4-components system \eqref{ac2dX}
in section \ref{sec:ac2d}, and also the
DG method (A-Double) for the augmented 6-components system \eqref{sys:2daux}.
The method (A-Double) is only tested for triangular meshes.
% Note that the method (A) augment the 
% above equations with the auxiliary {\it zero} function that solve 
% \[
%  \phi_t -.5\phi_x = 0.
% \]
% On triangular meshes, we also present the numerical results using the 
Table \ref{table:ac2d1} and Table \ref{table:ac2d3}
lists the numerical errors and their orders for the above DG methods at $T=0.1$
on the non-uniform rectangular meshes, and triangular meshes, respectively.
% And Table \ref{table:ac2d3}
% list the errors and orders on the  triangular meshes.
Again, we use $Q^k/P^k$ polynomials with $1\le k\le 3$. 
% We use the following numerical flux for the method (A) on 
% general quadrilateral or triangular meshes

From the tables, we observe 
optimal $(k+1)$th order of accuracy for all the variable $p_h, u_h, v_h,$ and $\phi_h$ 
for the 4-components energy-conserving DG method (A) on both types of meshes. 
We also observe optimal convergence for the 6-components, energy conserving DG method (A-Double)
on triangular meshes, with a smaller absolute error compared with the method (A).
The optimal convergence on rectangular meshes is 
understood; see Remark \ref{rk-error}. 
But we do not have a theoretical proof 
for such optimality on the triangular meshes for both methods.
% 
% The optimal convergence on rectangular meshes is supported by 
% our convergence result in Theorem \ref{thm:sys2dX}. We do not have a theoretical proof 
% for such optimality on the general quadrilateral or triangular meshes.
We also observe optimal convergence for the method (U), but
suboptimal convergence for the  method (C) for all polynomial degrees.

\begin{table}[htbp]
\caption{\label{table:ac2d1} 
The $L^2$-errors
and orders for Example 4.8 for the upwinding DG method (U), the central DG method (C), and the 
new DG method (A) on a non-uniform rectangular mesh with $N\times N$ cells. 
$T =0.1$. 
} \centering
\bigskip
\begin{tabular}{|c|c|cc|cc|cc|cc|}
\cline{1-8}
 & &\multicolumn{6}{|c|}{(U)}\\
 \cline{1-8}
 &  {$N$}      & $\|p-p_h\|$ & Order & $\|u-u_h\|$ & Order &  $\|v-v_h\|$ & Order\\
 \cline{1-8}
\multirow{4}{*}{$P^1$}
 & 10 & 1.22e-02  &  -0.00 & 1.40e-02  &  -0.00 & 1.40e-02  &  -0.00 \\ 
 & 20 & 3.34e-03  &  1.86 & 3.30e-03  &  2.08 & 3.57e-03  &  1.97 \\ 
 & 40 & 8.70e-04  &  1.94 & 7.98e-04  &  2.05 & 8.94e-04  &  2.00 \\ 
 \cline{1-8}
\multirow{4}{*}{$P^2$}
 & 10 & 6.97e-04  &  -0.00 & 6.93e-04  &  -0.00 & 7.43e-04  &  -0.00 \\ 
 & 20 & 9.10e-05  &  2.94 & 8.40e-05  &  3.04 & 9.31e-05  &  3.00 \\ 
 & 40 & 1.16e-05  &  2.98 & 1.03e-05  &  3.03 & 1.16e-05  &  3.01 \\ 
 \cline{1-8}
\multirow{3}{*}{$P^3$}
 & 10 & 2.84e-05  &  -0.00 & 2.74e-05  &  -0.00 & 2.94e-05  &  -0.00 \\ 
 & 20 & 1.85e-06  &  3.94 & 1.68e-06  &  4.03 & 1.88e-06  &  3.97 \\ 
 & 40 & 1.17e-07  &  3.98 & 1.04e-07  &  4.01 & 1.17e-07  &  4.01 \\ 
 \cline{1-8}
 & &\multicolumn{6}{|c|}{(C)}\\
 \cline{1-8}
 &  {$N$}      & $\|p-p_h\|$ & Order & $\|u-u_h\|$ & Order &  $\|v-v_h\|$ & Order\\
 \cline{1-8}
\multirow{4}{*}{$P^1$}
 & 10 & 2.93e-02  &  -0.00 & 8.09e-02  &  -0.00 & 3.52e-02  &  -0.00 \\ 
 & 20 & 1.39e-02  &  1.08 & 4.10e-02  &  0.98 & 1.71e-02  &  1.04 \\ 
 & 40 & 6.86e-03  &  1.02 & 2.05e-02  &  1.00 & 8.49e-03  &  1.01 \\ 
 \cline{1-8}
\multirow{4}{*}{$P^2$}
 & 10 & 6.66e-04  &  -0.00 & 8.29e-03  &  -0.00 & 9.82e-04  &  -0.00 \\ 
 & 20 & 1.10e-04  &  2.60 & 1.54e-03  &  2.43 & 4.73e-04  &  1.05 \\ 
 & 40 & 1.76e-05  &  2.64 & 7.86e-05  &  4.29 & 1.18e-04  &  2.01 \\ 
\cline{1-8}
\multirow{3}{*}{$P^3$}
 & 10 & 6.52e-05  &  -0.00 & 2.62e-04  &  -0.00 & 7.97e-05  &  -0.00 \\ 
 & 20 & 8.10e-06  &  3.01 & 2.72e-05  &  3.27 & 8.55e-06  &  3.22 \\ 
 & 40 & 1.01e-06  &  3.01 & 2.68e-06  &  3.34 & 1.05e-06  &  3.03 \\ 
\cline{1-10}
 & &\multicolumn{8}{|c|}{(A)}\\
 \cline{1-10}
 &  {$N$}      & $\|p-p_h\|$ & Order & $\|u-u_h\|$ & Order &  $\|v-v_h\|$ & Order
 &  $\|\phi_h\|$ & Order\\
 \cline{1-10}
\multirow{4}{*}{$P^1$}
 & 10 & 2.14e-02  &  -0.00 & 3.24e-02  &  -0.00 & 1.40e-02  &  -0.00 & 4.39e-03  &  -0.00 \\ 
 & 20 & 4.45e-03  &  2.27 & 8.10e-03  &  2.00 & 3.11e-03  &  2.17 & 1.80e-03  &  1.29 \\ 
 & 40 & 1.05e-03  &  2.08 & 2.00e-03  &  2.01 & 7.42e-04  &  2.07 & 4.64e-04  &  1.95 \\ 
 \cline{1-10}
\multirow{4}{*}{$P^2$}
 & 10 & 9.88e-04  &  -0.00 & 1.66e-03  &  -0.00 & 7.31e-04  &  -0.00 & 2.95e-04  &  -0.00 \\ 
 & 20 & 1.11e-04  &  3.16 & 2.01e-04  &  3.04 & 8.09e-05  &  3.18 & 5.15e-05  &  2.52 \\ 
 & 40 & 1.36e-05  &  3.03 & 2.53e-05  &  2.99 & 9.51e-06  &  3.09 & 6.05e-06  &  3.09 \\ 
 \cline{1-10}
\multirow{3}{*}{$P^3$}
 & 10 & 3.63e-05  &  -0.00 & 6.39e-05  &  -0.00 & 2.61e-05  &  -0.00 & 1.51e-05  &  -0.00 \\ 
 & 20 & 2.25e-06  &  4.02 & 4.06e-06  &  3.98 & 1.58e-06  &  4.04 & 8.94e-07  &  4.08 \\ 
 & 40 & 1.38e-07  &  4.02 & 2.54e-07  &  4.00 & 9.70e-08  &  4.03 & 5.83e-08  &  3.94 \\ 
\hline
\end{tabular}
\end{table}

\begin{table}[htbp]
\caption{\label{table:ac2d3} 
The $L^2$-errors
and orders for Example 4.8 for the upwinding DG method (U), the central DG method (C), the 
new DG method (A) (4-components), and the DG method (A-Double) (6-components)
on a unstructured triangular mesh with mesh size $h = 1/N$. 
The scalar $\phi_h$ is the auxiliary variable for method (A), while the (3-components)
vector $\bld \phi_h$ is the auxiliary variable for method (A-Double).
$T =0.1$.
} \centering
\bigskip
\begin{tabular}{|c|c|cc|cc|cc|cc|}
\cline{1-8}
 & &\multicolumn{6}{|c|}{(U)}\\
 \cline{1-8}
 &  {$N$}      & $\|p-p_h\|$ & Order & $\|u-u_h\|$ & Order &  $\|v-v_h\|$ & Order\\
% \hline
% \multirow{4}{*}{$P^0$}
%  & 10 & 3.53e-01  &  -0.00 & 3.79e-01  &  -0.00 & 1.85e-01  &  -0.00 & 9.24e-02  &  -0.00 \\ 
%  & 20 & 1.91e-01  &  0.88 & 1.59e-01  &  1.26 & 9.18e-02  &  1.01 & 4.57e-02  &  1.02 \\ 
%  & 40 & 1.05e-01  &  0.86 & 1.01e-01  &  0.65 & 4.59e-02  &  1.00 & 1.43e-02  &  1.67 \\ 
%  & 80 & 5.08e-02  &  1.05 & 5.32e-02  &  0.93 & 2.29e-02  &  1.00 & 1.10e-02  &  0.38 \\ 
 \cline{1-8}
\multirow{4}{*}{$P^1$}
 & 10 & 9.88e-03  &  -0.00 & 1.16e-02  &  -0.00 & 1.00e-02  &  -0.00 \\ 
 & 20 & 2.65e-03  &  1.90 & 2.79e-03  &  2.05 & 2.60e-03  &  1.95 \\ 
 & 40 & 6.80e-04  &  1.96 & 6.85e-04  &  2.03 & 6.43e-04  &  2.02 \\ 
 \cline{1-8}
\multirow{4}{*}{$P^2$}
 & 10 & 6.94e-04  &  -0.00 & 7.94e-04  &  -0.00 & 6.72e-04  &  -0.00 \\ 
 & 20 & 8.81e-05  &  2.98 & 1.02e-04  &  2.97 & 8.34e-05  &  3.01 \\ 
 & 40 & 1.07e-05  &  3.05 & 1.18e-05  &  3.11 & 9.94e-06  &  3.07 \\ 
 \cline{1-8}
\multirow{3}{*}{$P^3$}
 & 10 & 3.75e-05  &  -0.00 & 4.29e-05  &  -0.00 & 3.63e-05  &  -0.00 \\ 
 & 20 & 2.38e-06  &  3.98 & 2.65e-06  &  4.02 & 2.32e-06  &  3.97 \\ 
 & 40 & 1.40e-07  &  4.08 & 1.50e-07  &  4.15 & 1.31e-07  &  4.14 \\ 
 \cline{1-8}
 & &\multicolumn{6}{|c|}{(C)}\\
 \cline{1-8}
 &  {$N$}      & $\|p-p_h\|$ & Order & $\|u-u_h\|$ & Order &  $\|v-v_h\|$ & Order\\
 \cline{1-8}
\multirow{4}{*}{$P^1$}
 & 10 & 3.11e-02  &  -0.00 & 2.10e-01  &  -0.00 & 6.07e-02  &  -0.00 \\ 
 & 20 & 1.58e-02  &  0.98 & 1.09e-01  &  0.95 & 2.84e-02  &  1.10 \\ 
 & 40 & 7.58e-03  &  1.06 & 5.66e-02  &  0.94 & 1.33e-02  &  1.09 \\ 
 \cline{1-8}
\multirow{4}{*}{$P^2$}
 & 10 & 2.83e-03  &  -0.00 & 6.84e-03  &  -0.00 & 5.28e-03  &  -0.00 \\ 
 & 20 & 6.22e-04  &  2.18 & 1.34e-03  &  2.36 & 1.25e-03  &  2.08 \\ 
 & 40 & 1.50e-04  &  2.05 & 3.03e-04  &  2.14 & 2.65e-04  &  2.23 \\ 
\cline{1-8}
\multirow{3}{*}{$P^3$}
 & 10 & 1.62e-04  &  -0.00 & 8.24e-04  &  -0.00 & 2.94e-04  &  -0.00 \\ 
 & 20 & 1.82e-05  &  3.15 & 9.54e-05  &  3.11 & 3.24e-05  &  3.18 \\ 
 & 40 & 2.28e-06  &  3.00 & 8.38e-06  &  3.51 & 3.99e-06  &  3.02 \\ 
\cline{1-10}
 & &\multicolumn{8}{|c|}{(A)}\\
 \cline{1-10}
 &  {$N$}      & $\|p-p_h\|$ & Order & $\|u-u_h\|$ & Order &  $\|v-v_h\|$ & Order
 &  $\|\phi_h\|$ & Order\\
 \cline{1-10}
\multirow{4}{*}{$P^1$}
 & 10 & 1.51e-02  &  -0.00 & 1.73e-02  &  -0.00 & 1.35e-02  &  -0.00 & 1.48e-02  &  -0.00 \\ 
 & 20 & 3.38e-03  &  2.16 & 4.55e-03  &  1.93 & 3.44e-03  &  1.97 & 3.58e-03  &  2.05 \\ 
 & 40 & 8.26e-04  &  2.03 & 1.09e-03  &  2.07 & 7.29e-04  &  2.24 & 6.67e-04  &  2.42 \\ 
 \cline{1-10}
\multirow{4}{*}{$P^2$}
 & 10 & 9.38e-04  &  -0.00 & 1.35e-03  &  -0.00 & 1.02e-03  &  -0.00 & 1.10e-03  &  -0.00 \\ 
 & 20 & 1.12e-04  &  3.06 & 1.76e-04  &  2.94 & 1.11e-04  &  3.20 & 1.31e-04  &  3.07 \\ 
 & 40 & 1.30e-05  &  3.11 & 1.78e-05  &  3.30 & 1.13e-05  &  3.30 & 1.29e-05  &  3.34 \\ 
 \cline{1-10}
\multirow{3}{*}{$P^3$}
 & 10 & 5.07e-05  &  -0.00 & 6.36e-05  &  -0.00 & 5.06e-05  &  -0.00 & 6.19e-05  &  -0.00 \\ 
 & 20 & 3.04e-06  &  4.06 & 4.19e-06  &  3.93 & 2.95e-06  &  4.10 & 3.09e-06  &  4.32 \\ 
 & 40 & 1.68e-07  &  4.18 & 2.29e-07  &  4.19 & 1.51e-07  &  4.29 & 1.49e-07  &  4.38 \\ 
\cline{1-10}
 & &\multicolumn{8}{|c|}{(A-Double)}\\
 \cline{1-10}
 &  {$N$}      & $\|p-p_h\|$ & Order & $\|u-u_h\|$ & Order &  $\|v-v_h\|$ & Order
 &  $\|\bld \phi_h\|$ & Order\\
 \cline{1-10}
\multirow{4}{*}{$P^1$}
 & 10 & 7.76e-03  &  -0.00 & 7.94e-03  &  -0.00 & 6.96e-03  &  -0.00 & 1.41e-02  &  -0.00\\
 & 20 & 1.93e-03  &  2.01 & 2.09e-03  &  1.92 & 1.77e-03  &  1.98 & 3.43e-03  &  2.04\\
 & 40 & 4.79e-04  &  2.01 & 4.98e-04  &  2.07 & 4.35e-04  &  2.02 & 8.61e-04  &  1.99\\
 \cline{1-10}
\multirow{4}{*}{$P^2$}
 & 10 & 5.08e-04  &  -0.00 & 5.81e-04  &  -0.00 & 5.11e-04  &  -0.00 & 8.80e-04  &  -0.00\\
 & 20 & 6.71e-05  &  2.92 & 7.03e-05  &  3.05 & 6.02e-05  &  3.09 & 1.12e-04  &  2.97\\
 & 40 & 7.99e-06  &  3.07 & 8.28e-06  &  3.09 & 7.28e-06  &  3.05 & 1.34e-05  &  3.07\\
 \cline{1-10}
\multirow{3}{*}{$P^3$}
 & 10 & 2.85e-05  &  -0.00 & 3.03e-05  &  -0.00 & 2.48e-05  &  -0.00 & 5.17e-05  &  -0.00\\
 & 20 & 1.70e-06  &  4.07 & 1.98e-06  &  3.94 & 1.64e-06  &  3.92 & 3.12e-06  &  4.05\\
 & 40 & 9.87e-08  &  4.10 & 1.06e-07  &  4.22 & 9.24e-08  &  4.15 & 1.79e-07  &  4.12\\
\hline
\end{tabular}
\end{table}

\subsection*{Example 4.9: 2D acoustics, zero background velocity}
We consider the acoustic equations \eqref{ac2d} with  coefficients 
$\rho_0=K_0=1$, and $u_0=0$, $v_0 = 0$, i.e.,
\begin{align*}
% \label{eq2-ac}
 p_t + u_x +v_y =& 0,\\
 u_t +  p_x =& 0,\\
 v_t +  p_y =& 0,
\end{align*}
on a unit square $\Omega=[0,1]\times [0,1]$ with 
initial condition $p(x,y, 0) = \sin(2\pi (x+y)), u(x,0)=v(x,0) = 0$, and a periodic boundary condition.
The exact solution is the following plane wave solution
\begin{align*}
p(x, y, t) = &\frac12\sin(2\pi (x+y-(\sqrt{2})t))+\frac12\sin(2\pi (x+y+(\sqrt{2})t)),\\
u(x, y, t) = &\frac{\sqrt{2}}{4}\sin(2\pi (x+y-(\sqrt{2})t))-\frac{\sqrt{2}}{4}\sin(2\pi (x+y+(\sqrt{2})t)),\\
v(x, y, t) = &\frac{\sqrt{2}}{4}\sin(2\pi (x+y-(\sqrt{2})t))-\frac{\sqrt{2}}{4}\sin(2\pi (x+y+(\sqrt{2})t)).
\end{align*}
Similar to the previous example, 
we present numerical results with the upwinding DG method (U), central DG method (C), and 
the 3-components DG method (A) with an alternating flux in section \ref{sec:ac2d}. 
We also present numerical results with the DG method (A-Double) 
for the augmented 6-components system \eqref{sys:2daux} on triangular meshes.

Note that the DG method with an alternating numerical flux (A)
on triangular meshes was observed to be suboptimal in \cite{LiShiShu18} for the Maxwell's equation.

Table \ref{table:ac2dn1} and Table \ref{table:ac2dn3}
lists the numerical errors and their orders with the above DG methods at $T=0.1$
on rectangular, and triangular meshes, respectively.
We use $Q^k/P^k$ polynomials with $1\le k\le 3$.

From the tables, while we still observe 
optimal $(k+1)$th order of accuracy for all the variable 
for method (A) on rectangular meshes, which is 
in agreement with Remark \ref{rk-error}, 
we only get suboptimal convergence order of $k$ for the velocity variables $u_h$ and $v_h$ 
on triangular meshes. 
On the other hand, the method (A-Double) is still optimally convergent on triangular meshes.
% On the other hand, optimal convergence of the method (U) is observed on both types of meshes.
% The convergence of the method (C) is worse than the method (A).
% We also observe optimal convergence for the DG method with an upwinding flux, but
% We also observe 
% suboptimal convergence for the  method (C) on both types of meshes, with a larger absolute error 
% than that for method (A) on triangular meshes.
% three types of meshes.
Similar as the previous example, the method (U) is optimally convergent, but (C) is suboptimal.

\begin{table}[htbp]
\caption{\label{table:ac2dn1} 
The $L^2$-errors
and orders for Example 4.9 for the upwinding DG method (U), the central DG method (C), and the 
new DG method (A) on a non-uniform rectangular mesh with $N\times N$ cells. 
$T =0.1$. 
} \centering
\bigskip
\begin{tabular}{|c|c|cc|cc|cc|}
\cline{1-8}
 & &\multicolumn{6}{|c|}{(U)}\\
 \cline{1-8}
 &  {$N$}      & $\|p-p_h\|$ & Order & $\|u-u_h\|$ & Order &  $\|v-v_h\|$ & Order\\
 \cline{1-8}
\multirow{4}{*}{$P^1$}
 & 10 & 1.25e-02  &  -0.00 & 1.37e-02  &  -0.00 & 1.37e-02  &  -0.00 \\ 
 & 20 & 3.37e-03  &  1.89 & 3.21e-03  &  2.10 & 3.22e-03  &  2.09 \\ 
 & 40 & 8.71e-04  &  1.95 & 7.77e-04  &  2.05 & 7.78e-04  &  2.05 \\ 
 \cline{1-8}
\multirow{4}{*}{$P^2$}
 & 10 & 7.08e-04  &  -0.00 & 6.92e-04  &  -0.00 & 6.91e-04  &  -0.00 \\ 
 & 20 & 9.12e-05  &  2.96 & 8.20e-05  &  3.08 & 8.24e-05  &  3.07 \\ 
 & 40 & 1.16e-05  &  2.98 & 1.01e-05  &  3.02 & 1.01e-05  &  3.03 \\ 
 \cline{1-8}
\multirow{3}{*}{$P^3$}
 & 10 & 2.86e-05  &  -0.00 & 2.68e-05  &  -0.00 & 2.66e-05  &  -0.00 \\ 
 & 20 & 1.86e-06  &  3.94 & 1.64e-06  &  4.03 & 1.65e-06  &  4.01 \\ 
 & 40 & 1.17e-07  &  3.98 & 1.02e-07  &  4.01 & 1.02e-07  &  4.02 \\ 
 \cline{1-8}
 & &\multicolumn{6}{|c|}{(C)}\\
 \cline{1-8}
 &  {$N$}      & $\|p-p_h\|$ & Order & $\|u-u_h\|$ & Order &  $\|v-v_h\|$ & Order\\
 \cline{1-8}
\multirow{4}{*}{$P^1$}
 & 10 & 2.99e-02  &  -0.00 & 2.73e-01  &  -0.00 & 2.73e-01  &  -0.00 \\ 
 & 20 & 1.41e-02  &  1.08 & 1.39e-01  &  0.98 & 1.39e-01  &  0.98 \\ 
 & 40 & 6.95e-03  &  1.02 & 6.96e-02  &  1.00 & 6.96e-02  &  0.99 \\ 
 \cline{1-8}
\multirow{4}{*}{$P^2$}
 & 10 & 6.54e-04  &  -0.00 & 8.70e-03  &  -0.00 & 2.25e-03  &  -0.00 \\ 
 & 20 & 1.08e-04  &  2.60 & 1.59e-03  &  2.45 & 1.51e-03  &  0.58 \\ 
 & 40 & 1.77e-05  &  2.61 & 1.17e-04  &  3.76 & 3.77e-04  &  2.00 \\ 
\cline{1-8}
\multirow{3}{*}{$P^3$}
 & 10 & 5.84e-05  &  -0.00 & 5.79e-04  &  -0.00 & 5.57e-04  &  -0.00 \\ 
 & 20 & 7.22e-06  &  3.02 & 7.13e-05  &  3.02 & 6.90e-05  &  3.01 \\ 
 & 40 & 9.06e-07  &  2.99 & 8.68e-06  &  3.04 & 8.67e-06  &  2.99 \\ 
\cline{1-8}
 & &\multicolumn{6}{|c|}{(A)}\\
 \cline{1-8}
 &  {$N$}      & $\|p-p_h\|$ & Order & $\|u-u_h\|$ & Order &  $\|v-v_h\|$ & Order\\
 \cline{1-8}
\multirow{4}{*}{$P^1$}
 & 10 & 1.52e-02  &  -0.00 & 9.27e-02  &  -0.00 & 9.25e-02  &  -0.00  \\ 
 & 20 & 3.88e-03  &  1.97 & 2.37e-02  &  1.97 & 2.36e-02  &  1.97 \\ 
 & 40 & 9.74e-04  &  1.99 & 5.95e-03  &  2.00 & 5.94e-03  &  1.99  \\ 
 \cline{1-8}
\multirow{4}{*}{$P^2$}
 & 10 & 7.90e-04  &  -0.00 & 4.71e-03  &  -0.00 & 4.72e-03  &  -0.00  \\ 
 & 20 & 1.01e-04  &  2.97 & 6.01e-04  &  2.97 & 5.96e-04  &  2.98  \\ 
 & 40 & 1.27e-05  &  2.99 & 7.49e-05  &  3.00 & 7.49e-05  &  2.99  \\ 
 \cline{1-8}
\multirow{3}{*}{$P^3$}
 & 10 & 3.15e-05  &  -0.00 & 1.85e-04  &  -0.00 & 1.87e-04  &  -0.00 \\ 
 & 20 & 2.03e-06  &  3.95 & 1.19e-05  &  3.95 & 1.18e-05  &  3.99  \\ 
 & 40 & 1.27e-07  &  4.00 & 7.43e-07  &  4.01 & 7.43e-07  &  3.99  \\ 
\hline
\end{tabular}
\end{table}

\begin{table}[htbp]
\caption{\label{table:ac2dn3} 
The $L^2$-errors
and orders for Example 4.9 for the upwinding DG method (U), the central DG method (C), the 
alternating flux DG method (A), and the DG method (A-Double)  (6-components)
on a unstructured triangular mesh with mesh size $h = 1/N$. 
$T =0.1$. 
} \centering
\bigskip
\begin{tabular}{|c|c|cc|cc|cc|}
\cline{1-8}
 & &\multicolumn{6}{|c|}{(U)}\\
 \cline{1-8}
 &  {$N$}      & $\|p-p_h\|$ & Order & $\|u-u_h\|$ & Order &  $\|v-v_h\|$ & Order\\
 \cline{1-8}
\multirow{4}{*}{$P^1$}
 & 10 & 9.24e-03  &  -0.00 & 1.90e-02  &  -0.00 & 2.16e-02  &  -0.00 \\ 
 & 20 & 2.41e-03  &  1.94 & 4.11e-03  &  2.21 & 4.23e-03  &  2.35 \\ 
 & 40 & 6.11e-04  &  1.98 & 8.68e-04  &  2.24 & 9.03e-04  &  2.23 \\ 
 \cline{1-8}
\multirow{4}{*}{$P^2$}
 & 10 & 6.29e-04  &  -0.00 & 8.04e-04  &  -0.00 & 8.15e-04  &  -0.00 \\ 
 & 20 & 8.20e-05  &  2.94 & 9.12e-05  &  3.14 & 9.17e-05  &  3.15 \\ 
 & 40 & 1.00e-05  &  3.04 & 1.03e-05  &  3.14 & 1.07e-05  &  3.10 \\ 
 \cline{1-8}
\multirow{3}{*}{$P^3$}
 & 10 & 3.44e-05  &  -0.00 & 3.97e-05  &  -0.00 & 3.83e-05  &  -0.00 \\ 
 & 20 & 2.12e-06  &  4.02 & 2.39e-06  &  4.05 & 2.39e-06  &  4.00 \\ 
 & 40 & 1.25e-07  &  4.09 & 1.33e-07  &  4.17 & 1.36e-07  &  4.14 \\ 
 \cline{1-8}
 & &\multicolumn{6}{|c|}{(C)}\\
 \cline{1-8}
 &  {$N$}      & $\|p-p_h\|$ & Order & $\|u-u_h\|$ & Order &  $\|v-v_h\|$ & Order\\
 \cline{1-8}
\multirow{4}{*}{$P^1$}
 & 10 & 1.49e-02  &  -0.00 & 6.14e-01  &  -0.00 & 6.12e-01  &  -0.00 \\ 
 & 20 & 3.88e-03  &  1.94 & 3.14e-01  &  0.97 & 3.17e-01  &  0.95 \\ 
 & 40 & 9.50e-04  &  2.03 & 1.55e-01  &  1.02 & 1.57e-01  &  1.02 \\ 
 \cline{1-8}
\multirow{4}{*}{$P^2$}
 & 10 & 7.09e-04  &  -0.00 & 3.61e-02  &  -0.00 & 3.62e-02  &  -0.00 \\ 
 & 20 & 8.62e-05  &  3.04 & 9.41e-03  &  1.94 & 9.58e-03  &  1.92 \\ 
 & 40 & 1.00e-05  &  3.10 & 2.19e-03  &  2.11 & 2.29e-03  &  2.06 \\ 
\cline{1-8}
\multirow{3}{*}{$P^3$}
 & 10 & 4.21e-05  &  -0.00 & 3.75e-03  &  -0.00 & 3.77e-03  &  -0.00 \\ 
 & 20 & 3.48e-06  &  3.60 & 4.71e-04  &  2.99 & 4.76e-04  &  2.99 \\ 
 & 40 & 3.74e-07  &  3.22 & 5.69e-05  &  3.05 & 5.77e-05  &  3.04 \\ 
\cline{1-8}
 & &\multicolumn{6}{|c|}{(A)}\\
 \cline{1-8}
 &  {$N$}      & $\|p-p_h\|$ & Order & $\|u-u_h\|$ & Order &  $\|v-v_h\|$ & Order
\\
 \cline{1-8}
\multirow{4}{*}{$P^1$}
 & 10 & 1.15e-02  &  -0.00 & 2.48e-01  &  -0.00 & 2.05e-01  &  -0.00 \\ 
 & 20 & 2.88e-03  &  1.99 & 1.40e-01  &  0.83 & 1.15e-01  &  0.83 \\ 
 & 40 & 7.15e-04  &  2.01 & 5.57e-02  &  1.33 & 6.66e-02  &  0.79 \\ 
 \cline{1-8}
\multirow{4}{*}{$P^2$}
 & 10 & 7.07e-04  &  -0.00 & 2.21e-02  &  -0.00 & 2.47e-02  &  -0.00 \\ 
 & 20 & 9.57e-05  &  2.88 & 6.53e-03  &  1.76 & 6.28e-03  &  1.97 \\ 
 & 40 & 1.14e-05  &  3.07 & 1.43e-03  &  2.19 & 1.65e-03  &  1.93 \\ 
 \cline{1-8}
\multirow{3}{*}{$P^3$}
 & 10 & 3.97e-05  &  -0.00 & 2.03e-03  &  -0.00 & 1.54e-03  &  -0.00 \\ 
 & 20 & 2.32e-06  &  4.09 & 2.62e-04  &  2.95 & 2.21e-04  &  2.80 \\ 
 & 40 & 1.38e-07  &  4.08 & 2.50e-05  &  3.39 & 2.95e-05  &  2.91 \\ 
 \cline{1-8}
 & &\multicolumn{6}{|c|}{(A-Double)}\\
 \cline{1-8}
 &  {$N$}      & $\|p-p_h\|$ & Order & $\|u-u_h\|$ & Order &  $\|v-v_h\|$ & Order
\\
 \cline{1-8}
\multirow{4}{*}{$P^1$}
 & 10 & 7.67e-03  &  -0.00 & 1.75e-02  &  -0.00 & 2.02e-02  &  -0.00 \\
 & 20 & 1.91e-03  &  2.00 & 3.81e-03  &  2.20 & 3.81e-03  &  2.41 \\
 & 40 & 4.77e-04  &  2.00 & 7.83e-04  &  2.28 & 7.91e-04  &  2.27 \\
 \cline{1-8}
\multirow{4}{*}{$P^2$}
 & 10 & 5.03e-04  &  -0.00 & 6.49e-04  &  -0.00 & 6.62e-04  &  -0.00 \\
 & 20 & 6.77e-05  &  2.89 & 6.67e-05  &  3.28 & 6.90e-05  &  3.26 \\
 & 40 & 8.01e-06  &  3.08 & 7.81e-06  &  3.09 & 8.11e-06  &  3.09 \\
 \cline{1-8}
\multirow{3}{*}{$P^3$}
 & 10 & 2.82e-05  &  -0.00 & 2.97e-05  &  -0.00 & 2.81e-05  &  -0.00 \\
 & 20 & 1.66e-06  &  4.09 & 1.91e-06  &  3.96 & 1.91e-06  &  3.88 \\
 & 40 & 9.78e-08  &  4.08 & 1.03e-07  &  4.21 & 1.07e-07  &  4.16 \\
\hline
\end{tabular}
\end{table}

\subsection*{Example 4.10: 2D elastodynamics with periodic boundary condition}
We consider the equations for elastodynamics \eqref{elas} with  coefficients 
$\lambda=2, \mu=1, \rho=1$.
The domain is a unit square $\Omega=[0,1]\times [0,1]$, and a periodic boundary condition is used.
The initial condition is chosen such that exact solution is the following plane wave solution:
\begin{align*}
 \sigma_{xx}(x,y,t) =&\; -\mu\sin(2\pi(x+y+\frac{\sqrt{2}}{2}c_s t)) + (\lambda+\mu)\sin(2\pi(x+y-\frac{\sqrt{2}}{2}c_p t)),\\
 \sigma_{yy}(x,y,t) =&\; \mu\sin(2\pi(x+y+\frac{\sqrt{2}}{2}c_s t)) + (\lambda+\mu)\sin(2\pi(x+y-\frac{\sqrt{2}}{2}c_p t)),\\
 \sigma_{xy}(x,y,t) =&\;  \mu\sin(2\pi(x+y-\frac{\sqrt{2}}{2}c_p t)),\\
 v(x,y,t) =&\; -\frac{\sqrt{2}}{2}c_s\sin(2\pi(x+y+\frac{\sqrt{2}}{2}c_s t)) -\frac{\sqrt{2}}{2}c_p \sin(2\pi(x+y-\frac{\sqrt{2}}{2}c_p t)),\\
 w(x,y,t) =&\; \frac{\sqrt{2}}{2}c_s\sin(2\pi(x+y+\frac{\sqrt{2}}{2}c_s t)) -\frac{\sqrt{2}}{2}c_p \sin(2\pi(x+y-\frac{\sqrt{2}}{2}c_p t)),
\end{align*}
where 
$c_p = \sqrt{\frac{\lambda+2\mu}{\rho}}$ is the P wave speed, and 
$c_s = \sqrt{\frac{\mu}{\rho}}$ is the S wave speed.

% We present numerical results for the following three DG methods:
% the DG method with the Lax-Friedrichs flux \eqref{flux:lf}, denoted as (U),
% the DG method with central flux, denoted as (C), 
% and the DG method with alternating flux discussed in section \ref{sec:es2d}, denoted as (A).
Similar to the previous example, 
we present numerical results with the upwinding DG method (U), central DG method (C), and 
the 5-components DG method (A) with an alternating flux in section \ref{sec:es2d}. 
We also present numerical results with the DG method (A-Double) 
for the augmented 10-components system \eqref{sys:2daux} on triangular meshes.
% Here we choose 
% % the easier-to-implement, 
% the
% more dissipative Lax-Friedrichs flux \eqref{flux:lf}
% over the upwinding flux \eqref{flux:upwind} 
% due to its ease of implementation, especially on
% unstructured meshes.
% since the implementation of the upwinding flux needs the eigenvalue decomposition of the $5\times 5$ coefficient matrix.

The results are similar to Example 4.9.
From the tables, while we still observe 
optimal $(k+1)$th order of accuracy for all the variable 
for method (A) on rectangular meshes, which is 
in agreement with Remark \ref{rk-error}, 
we only get suboptimal convergence order of $k$ for the stress variables $\sigma_{xx,h}, \sigma_{yy,h}$
and $\sigma_{xy,h}$
on triangular meshes.
On the other hand, the method (A-Double) is still optimally convergent.
% optimal convergence of the method (U) is observed on both meshes (we do observe 
% a bit order reduction  on the stress variables on rectangular meshes for $Q^2$ space).
% The convergence of the method (C) is worse than the method (A).
% We also observe optimal convergence for the DG method with an upwinding flux, but
Similar as Example 4.9, the method (U) is optimally convergent, but (C) is suboptimal.
% We also observe 
% suboptimal convergence for the  method (C) on both types of meshes, with a larger absolute error 
% than that for method (A) on triangular meshes.
% three types of meshes.

\begin{table}[htbp]
\caption{\label{table:es2d1} 
The $L^2$-errors
and orders for Example 4.10 for the methods (U), (C), and
(A) on a non-uniform rectangular mesh with $N\times N$ cells. 
The stress errors 
$e_s^1 = \|\sigma_{xx}-\sigma_{xx,h}\|$, 
$e_s^2 = \|\sigma_{yy}-\sigma_{yy,h}\|$, 
$e_s^3 = \|\sigma_{xy}-\sigma_{xy,h}\|$, 
and the velocity error 
$e_v = \sqrt{\|v-v_{h}\|^2+\|w-w_h\|^2}$ are recorded.
$T =0.1$. 
} \centering
\bigskip
\begin{tabular}{|c|c|cc|cc|cc|cc|}
\cline{1-10}
 & &\multicolumn{8}{|c|}{(U)}\\
 \cline{1-10}
 &  {$N$}      & $e_s^1$ & Order & $e_s^2$ & Order
 & $e_s^3$ & Order
 & $e_v$ & Order \\
 \cline{1-10}
\multirow{4}{*}{$P^1$}
 & 10 & 7.66e-02  &  -0.00 & 6.70e-02  &  -0.00 & 3.80e-02  &  -0.00 & 4.66e-02  &  -0.00 \\ 
 & 20 & 2.04e-02  &  1.91 & 1.82e-02  &  1.88 & 7.07e-03  &  2.43 & 1.18e-02  &  1.98 \\ 
 & 40 & 5.50e-03  &  1.89 & 4.62e-03  &  1.98 & 1.50e-03  &  2.23 & 2.96e-03  &  2.00 \\ 
 \cline{1-10}
\multirow{4}{*}{$P^2$}
 & 10 & 1.16e-02  &  -0.00 & 9.58e-03  &  -0.00 & 3.45e-03  &  -0.00 & 3.03e-03  &  -0.00 \\ 
 & 20 & 1.81e-03  &  2.67 & 1.66e-03  &  2.53 & 4.11e-04  &  3.07 & 3.95e-04  &  2.94 \\ 
 & 40 & 2.87e-04  &  2.66 & 2.75e-04  &  2.59 & 5.01e-05  &  3.04 & 4.96e-05  &  2.99 \\ 
 \cline{1-10}
\multirow{3}{*}{$P^3$}
 & 10 & 2.73e-04  &  -0.00 & 2.22e-04  &  -0.00 & 5.84e-05  &  -0.00 & 1.02e-04  &  -0.00 \\ 
 & 20 & 1.93e-05  &  3.83 & 1.59e-05  &  3.80 & 3.50e-06  &  4.06 & 6.42e-06  &  3.98 \\ 
 & 40 & 1.31e-06  &  3.87 & 9.93e-07  &  4.00 & 2.03e-07  &  4.11 & 4.01e-07  &  4.00 \\ 
 \cline{1-10}
 & &\multicolumn{8}{|c|}{(C)}\\
 \cline{1-10}
 &  {$N$}      & $e_s^1$ & Order & $e_s^2$ & Order
 & $e_s^3$ & Order
 & $e_v$ & Order \\
 \cline{1-10}
\multirow{4}{*}{$P^1$}
 & 10 & 1.66e+00  &  -0.00 & 1.06e+00  &  -0.00 & 6.35e-01  &  -0.00 & 8.58e-02  &  -0.00 \\ 
 & 20 & 8.48e-01  &  0.97 & 5.32e-01  &  0.99 & 3.12e-01  &  1.02 & 4.08e-02  &  1.07 \\ 
 & 40 & 4.26e-01  &  0.99 & 2.67e-01  &  1.00 & 1.55e-01  &  1.01 & 2.02e-02  &  1.02 \\ 
 \cline{1-10}
\multirow{4}{*}{$P^2$}
 & 10 & 5.37e-02  &  -0.00 & 6.01e-03  &  -0.00 & 4.75e-03  &  -0.00 & 3.94e-03  &  -0.00 \\ 
 & 20 & 9.82e-03  &  2.45 & 3.11e-03  &  0.95 & 1.23e-03  &  1.95 & 9.72e-04  &  2.02 \\ 
 & 40 & 7.38e-04  &  3.73 & 7.54e-04  &  2.05 & 2.63e-04  &  2.23 & 1.90e-04  &  2.35 \\ 
\cline{1-10}
\multirow{3}{*}{$P^3$}
 & 10 & 3.68e-03  &  -0.00 & 1.62e-03  &  -0.00 & 6.47e-04  &  -0.00 & 2.76e-04  &  -0.00 \\ 
 & 20 & 4.55e-04  &  3.01 & 2.05e-04  &  2.98 & 8.14e-05  &  2.99 & 3.46e-05  &  3.00 \\ 
 & 40 & 5.56e-05  &  3.03 & 2.59e-05  &  2.99 & 1.01e-05  &  3.01 & 4.27e-06  &  3.01 \\ 
\cline{1-10}
 & &\multicolumn{8}{|c|}{(A)}\\
 \cline{1-10}
  &  {$N$}      & $e_s^1$ & Order & $e_s^2$ & Order
 & $e_s^3$ & Order
 & $e_v$ & Order \\
 \cline{1-10}
\multirow{4}{*}{$P^1$}
 & 10 & 5.57e-01  &  -0.00 & 1.84e-01  &  -0.00 & 5.32e-02  &  -0.00 & 6.62e-02  &  -0.00 \\ 
 & 20 & 1.44e-01  &  1.95 & 4.30e-02  &  2.10 & 8.53e-03  &  2.64 & 1.49e-02  &  2.15 \\ 
 & 40 & 3.63e-02  &  1.99 & 1.06e-02  &  2.02 & 1.68e-03  &  2.34 & 3.54e-03  &  2.08 \\ 
 \cline{1-10}
\multirow{4}{*}{$P^2$}
 & 10 & 2.82e-02  &  -0.00 & 8.66e-03  &  -0.00 & 2.69e-03  &  -0.00 & 3.33e-03  &  -0.00 \\ 
 & 20 & 3.65e-03  &  2.95 & 1.07e-03  &  3.01 & 2.38e-04  &  3.50 & 3.68e-04  &  3.18 \\ 
 & 40 & 4.57e-04  &  3.00 & 1.34e-04  &  3.00 & 2.26e-05  &  3.39 & 4.50e-05  &  3.03 \\ 
 \cline{1-10}
\multirow{3}{*}{$P^3$}
 & 10 & 1.12e-03  &  -0.00 & 3.37e-04  &  -0.00 & 8.05e-05  &  -0.00 & 1.19e-04  &  -0.00 \\ 
 & 20 & 7.27e-05  &  3.95 & 2.11e-05  &  4.00 & 3.85e-06  &  4.39 & 7.42e-06  &  4.00 \\ 
 & 40 & 4.53e-06  &  4.00 & 1.33e-06  &  3.99 & 2.13e-07  &  4.18 & 4.57e-07  &  4.02 \\ 
\hline
\end{tabular}
\end{table}

\begin{table}[htbp]
\caption{\label{table:es2d3} 
The $L^2$-errors
and orders for Example 4.10 for the methods (U), (C), (A), and (A-Double)
% the central DG method (C), and the 
% new DG method (A) 
on a triangular mesh with mesh size $h = 1/N$. 
The stress errors 
$e_s^1 = \|\sigma_{xx}-\sigma_{xx,h}\|$, 
$e_s^2 = \|\sigma_{yy}-\sigma_{yy,h}\|$, 
$e_s^3 = \|\sigma_{xy}-\sigma_{xy,h}\|$, 
and the velocity error 
$e_v = \sqrt{\|v-v_{h}\|^2+\|w-w_h\|^2}$ are recorded.
$T =0.1$. 
} \centering
\bigskip
\begin{tabular}{|c|c|cc|cc|cc|cc|}
\cline{1-10}
 & &\multicolumn{8}{|c|}{(U)}\\
 \cline{1-10}
 &  {$N$}      & $e_s^1$ & Order & $e_s^2$ & Order
 & $e_s^3$ & Order
 & $e_v$ & Order \\
 \cline{1-10}
\multirow{4}{*}{$P^1$}
 & 10 & 6.12e-02  &  -0.00 & 5.71e-02  &  -0.00 & 2.66e-02  &  -0.00 & 3.79e-02  &  -0.00 \\ 
 & 20 & 1.57e-02  &  1.97 & 1.42e-02  &  2.00 & 6.15e-03  &  2.11 & 9.85e-03  &  1.94 \\ 
 & 40 & 3.84e-03  &  2.03 & 3.52e-03  &  2.02 & 1.43e-03  &  2.10 & 2.42e-03  &  2.02 \\ 
 \cline{1-10}
\multirow{4}{*}{$P^2$}
 & 10 & 5.80e-03  &  -0.00 & 6.13e-03  &  -0.00 & 2.86e-03  &  -0.00 & 2.81e-03  &  -0.00 \\ 
 & 20 & 9.28e-04  &  2.64 & 1.05e-03  &  2.55 & 5.93e-04  &  2.27 & 3.46e-04  &  3.02 \\ 
 & 40 & 1.22e-04  &  2.93 & 1.46e-04  &  2.84 & 8.09e-05  &  2.87 & 4.23e-05  &  3.03 \\ 
 \cline{1-10}
\multirow{3}{*}{$P^3$}
 & 10 & 2.98e-04  &  -0.00 & 3.39e-04  &  -0.00 & 1.64e-04  &  -0.00 & 1.33e-04  &  -0.00 \\ 
 & 20 & 1.80e-05  &  4.05 & 2.32e-05  &  3.87 & 9.79e-06  &  4.06 & 8.91e-06  &  3.90 \\ 
 & 40 & 1.11e-06  &  4.01 & 1.53e-06  &  3.92 & 6.18e-07  &  3.99 & 4.97e-07  &  4.16 \\ 
 \cline{1-10}
 & &\multicolumn{8}{|c|}{(C)}\\
 \cline{1-10}
 &  {$N$}      & $e_s^1$ & Order & $e_s^2$ & Order
 & $e_s^3$ & Order
 & $e_v$ & Order \\
 \cline{1-10}
\multirow{4}{*}{$P^1$}
 & 10 & 3.10e+00  &  -0.00 & 1.59e+00  &  -0.00 & 1.63e+00  &  -0.00 & 5.32e-02  &  -0.00 \\ 
 & 20 & 1.50e+00  &  1.05 & 6.65e-01  &  1.26 & 8.35e-01  &  0.97 & 1.15e-02  &  2.21 \\ 
 & 40 & 7.37e-01  &  1.02 & 3.11e-01  &  1.09 & 4.07e-01  &  1.04 & 2.81e-03  &  2.04 \\ 
 \cline{1-10}
\multirow{4}{*}{$P^2$}
 & 10 & 1.22e-01  &  -0.00 & 9.32e-02  &  -0.00 & 9.13e-02  &  -0.00 & 3.34e-03  &  -0.00 \\ 
 & 20 & 2.69e-02  &  2.19 & 2.30e-02  &  2.02 & 2.28e-02  &  2.00 & 3.41e-04  &  3.29 \\ 
 & 40 & 5.55e-03  &  2.28 & 5.32e-03  &  2.11 & 5.04e-03  &  2.18 & 4.00e-05  &  3.09 \\ 
\cline{1-10}
\multirow{3}{*}{$P^3$}
 & 10 & 1.94e-02  &  -0.00 & 1.09e-02  &  -0.00 & 1.01e-02  &  -0.00 & 2.09e-04  &  -0.00 \\ 
 & 20 & 2.29e-03  &  3.08 & 1.09e-03  &  3.33 & 1.26e-03  &  3.01 & 1.73e-05  &  3.60 \\ 
 & 40 & 2.76e-04  &  3.05 & 1.30e-04  &  3.07 & 1.50e-04  &  3.07 & 1.55e-06  &  3.48 \\ 
\cline{1-10}
 & &\multicolumn{8}{|c|}{(A)}\\
 \cline{1-10}
  &  {$N$}      & $e_s^1$ & Order & $e_s^2$ & Order
 & $e_s^3$ & Order
 & $e_v$ & Order \\
 \cline{1-10}
\multirow{4}{*}{$P^1$}
 & 10 & 1.14e+00  &  -0.00 & 3.62e-01  &  -0.00 & 4.08e-01  &  -0.00 & 4.22e-02  &  -0.00 \\ 
 & 20 & 6.69e-01  &  0.77 & 2.27e-01  &  0.67 & 2.31e-01  &  0.82 & 1.08e-02  &  1.97 \\ 
 & 40 & 2.21e-01  &  1.60 & 1.29e-01  &  0.81 & 1.06e-01  &  1.12 & 2.63e-03  &  2.04 \\ 
 \cline{1-10}
\multirow{4}{*}{$P^2$}
 & 10 & 9.88e-02  &  -0.00 & 3.72e-02  &  -0.00 & 3.99e-02  &  -0.00 & 2.88e-03  &  -0.00 \\ 
 & 20 & 3.06e-02  &  1.69 & 1.21e-02  &  1.62 & 1.13e-02  &  1.82 & 3.47e-04  &  3.05 \\ 
 & 40 & 6.16e-03  &  2.31 & 3.32e-03  &  1.87 & 2.80e-03  &  2.01 & 4.11e-05  &  3.08 \\ 
 \cline{1-10}
\multirow{3}{*}{$P^3$}
 & 10 & 1.02e-02  &  -0.00 & 2.83e-03  &  -0.00 & 2.96e-03  &  -0.00 & 1.39e-04  &  -0.00 \\ 
 & 20 & 1.27e-03  &  3.00 & 4.71e-04  &  2.59 & 4.18e-04  &  2.82 & 9.35e-06  &  3.90 \\ 
 & 40 & 1.03e-04  &  3.63 & 6.43e-05  &  2.88 & 4.98e-05  &  3.07 & 5.22e-07  &  4.16 \\ 
 \cline{1-10}
 & &\multicolumn{8}{|c|}{(A-Double)}\\
 \cline{1-10}
  &  {$N$}      & $e_s^1$ & Order & $e_s^2$ & Order
 & $e_s^3$ & Order
 & $e_v$ & Order \\
 \cline{1-10}
\multirow{4}{*}{$P^1$}
 & 10 & 7.65e-02  &  -0.00 & 5.76e-02  &  -0.00 & 3.86e-02  &  -0.00 & 2.73e-02  &  -0.00 \\ 
 & 20 & 1.98e-02  &  1.95 & 1.62e-02  &  1.83 & 9.72e-03  &  1.99 & 6.99e-03  &  1.96 \\ 
 & 40 & 4.79e-03  &  2.05 & 4.01e-03  &  2.01 & 2.31e-03  &  2.07 & 1.71e-03  &  2.03 \\ 
 \cline{1-10}
\multirow{4}{*}{$P^2$}
 & 10 & 9.88e-02  &  -0.00 & 3.72e-02  &  -0.00 & 3.99e-02  &  -0.00 & 2.88e-03  &  -0.00 \\ 
 & 20 & 3.06e-02  &  1.69 & 1.21e-02  &  1.62 & 1.13e-02  &  1.82 & 3.47e-04  &  3.05 \\ 
 & 40 & 6.16e-03  &  2.31 & 3.32e-03  &  1.87 & 2.80e-03  &  2.01 & 4.11e-05  &  3.08 \\ 
 \cline{1-10}
\multirow{3}{*}{$P^3$}
 & 10 & 2.03e-04  &  -0.00 & 1.26e-04  &  -0.00 & 7.98e-05  &  -0.00 & 9.67e-05  &  -0.00 \\ 
 & 20 & 1.35e-05  &  3.91 & 8.59e-06  &  3.88 & 5.43e-06  &  3.88 & 6.46e-06  &  3.90 \\ 
  & 40 & 7.43e-07  &  4.19 & 4.95e-07  &  4.12 & 3.15e-07  &  4.11 & 3.60e-07  &  4.17 \\ 
\hline
\end{tabular}
\end{table}

\subsection*{Example 4.11: long time simulation: advection of a plane wave}
We consider the advection equation \eqref{eq-ad2} on the unit square with 
periodic boundary condition and 
initial condition 
$u(x,y,0) = \sin(4\pi (x+y))$. This is a two dimensional extension of the test Example 4.4.
We present numerical results for the three $P^2$-DG methods, (U) for upwinding flux,  (C) for central flux, and 
(A) for the new method. Both uniform rectangular mesh and unstructured triangular mesh are considered. 
We use the RK3 time stepping. The CFL number is taken to be $0.05$ on the rectangular mesh, and $0.02$ on the triangular mesh.

Numerical results on the cut line $y=.5$ at time $T = 40$ (wave propagates 80 cycles) are shown in Figure \ref{fig:pw2d}.
Figure \ref{fig:pw2d} indicates the superior performance of the new method on both rectangular and triangular meshes over the 
dissipative upwinding DG method. 
The results on the rectangular mesh for the central DG method and the new methods 
are comparable, both have small dissipation error with a slight phase shift.
However, the result on triangular mesh for the new method is clearly better than the 
central DG method, which is very oscillatory.
% Finally, we remark that changing the RK3 time stepping to the LF4 time stepping 

\begin{figure}[ht!]
 \caption{Numerical solution at $T=40$ on the cut line $y = 0.5$ 
 for Example 4.11. RK3 time stepping. 
Top row: method (U). 
Middle row: method (C).
Bottom row: method (A).
 Left: square mesh with $10\times 10$ cells, $Q^2$ space. Right: triangular mesh with meshsize $h = 0.1$, 
 $P^2$ space.
 (Roughly 15 dofs/wavelength in each direction)
 }
 \label{fig:pw2d}
 \includegraphics[width=.45\textwidth]{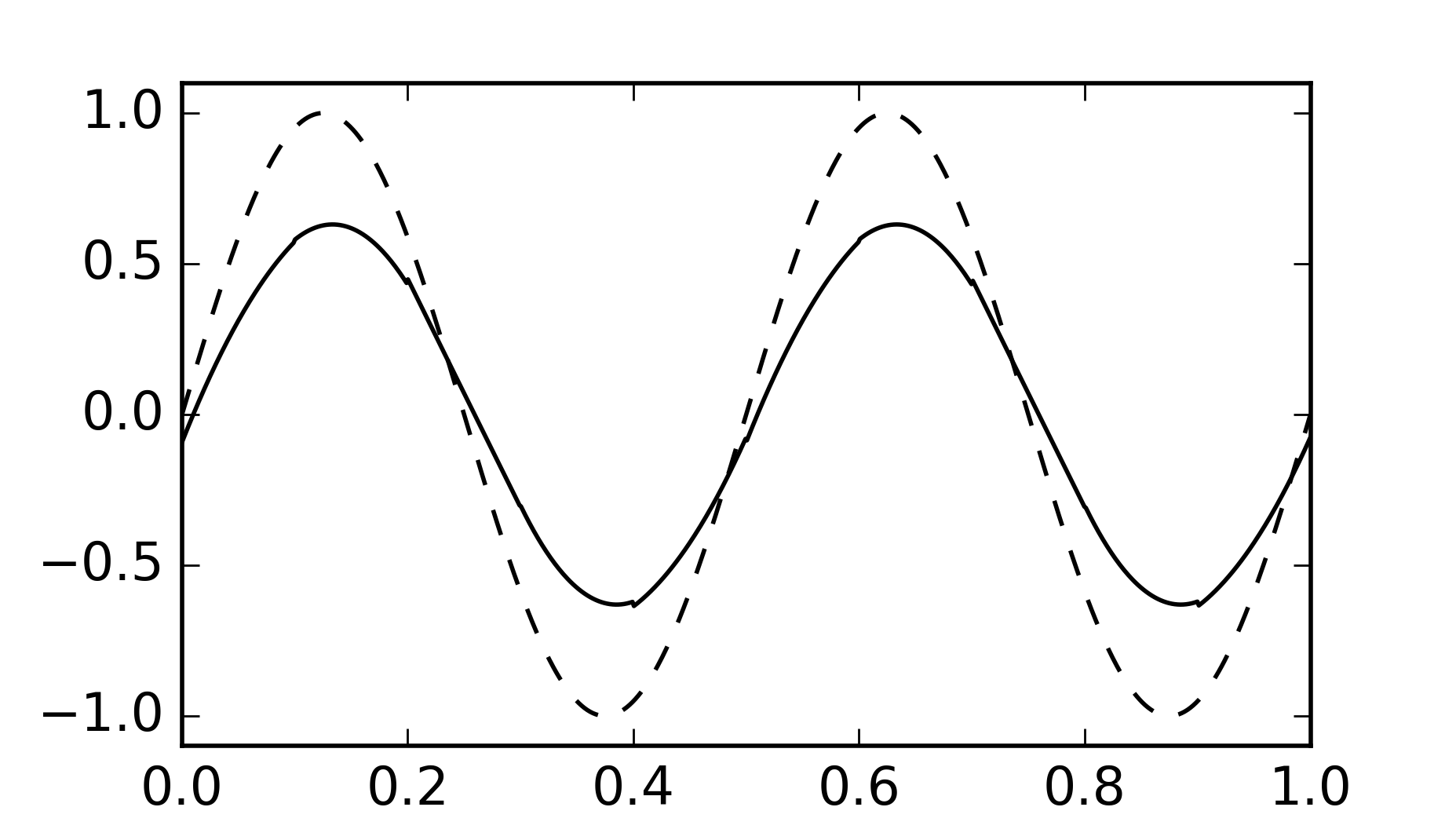}
 \includegraphics[width=.45\textwidth]{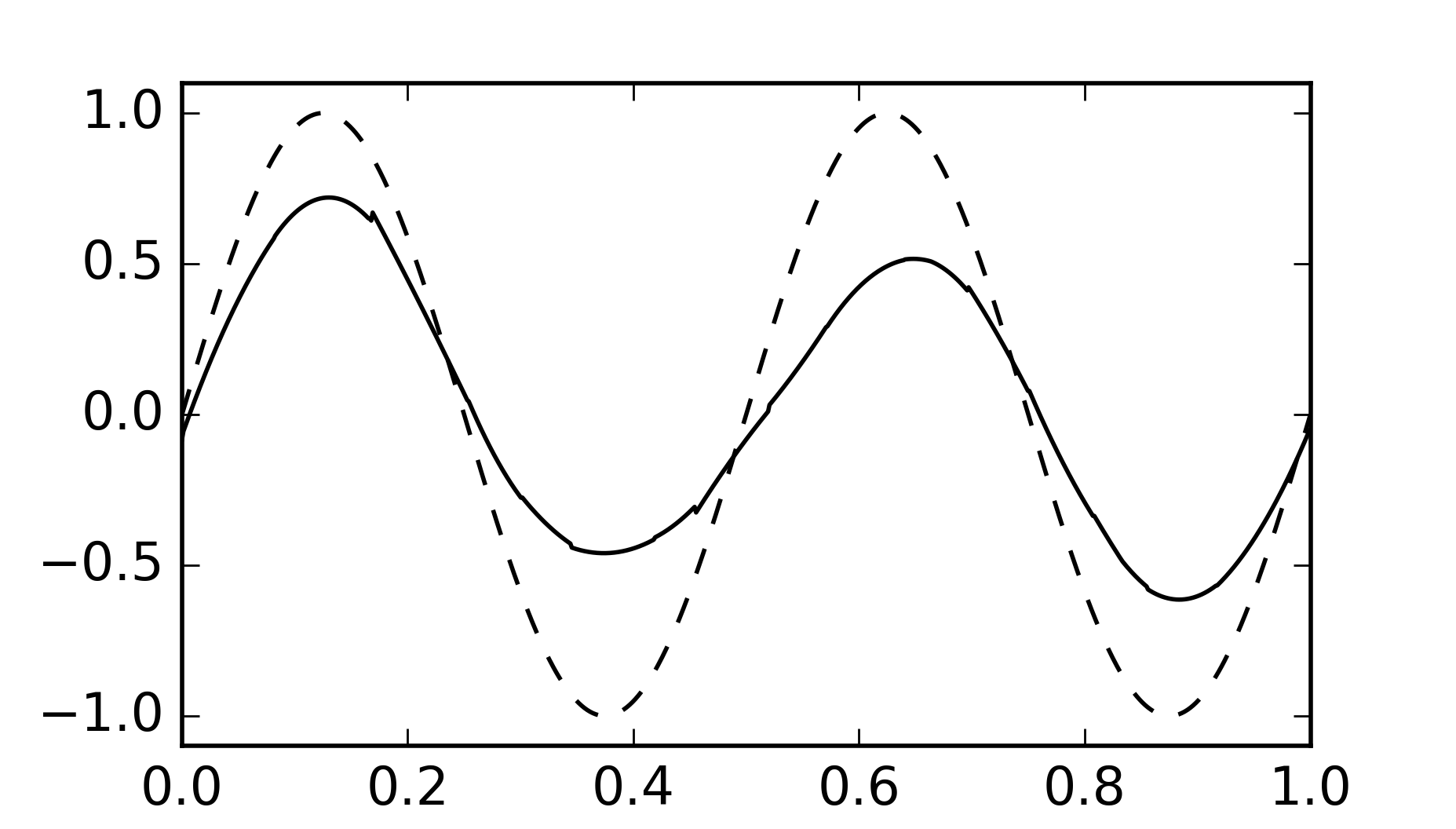}
 \includegraphics[width=.45\textwidth]{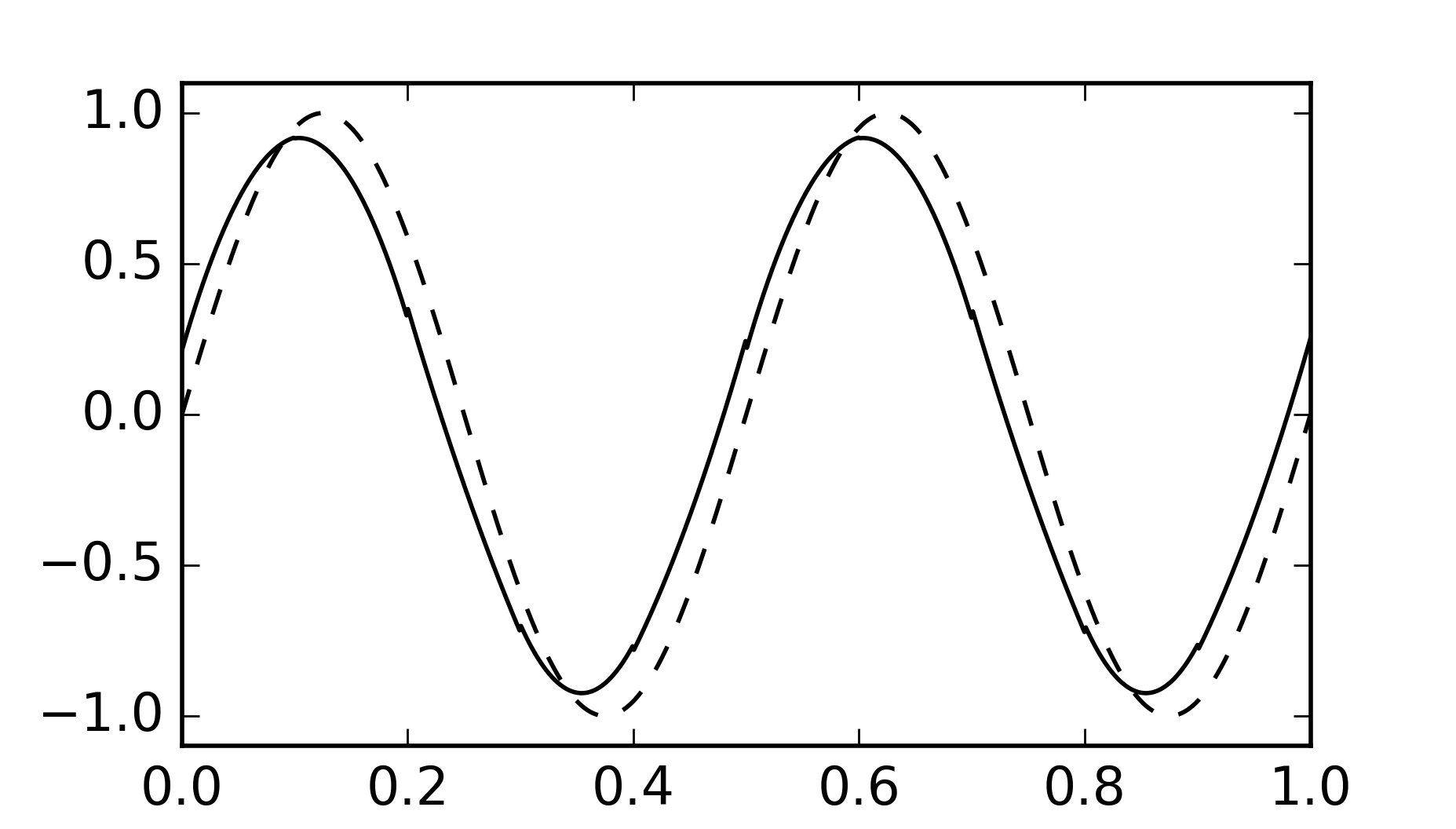}
 \includegraphics[width=.45\textwidth]{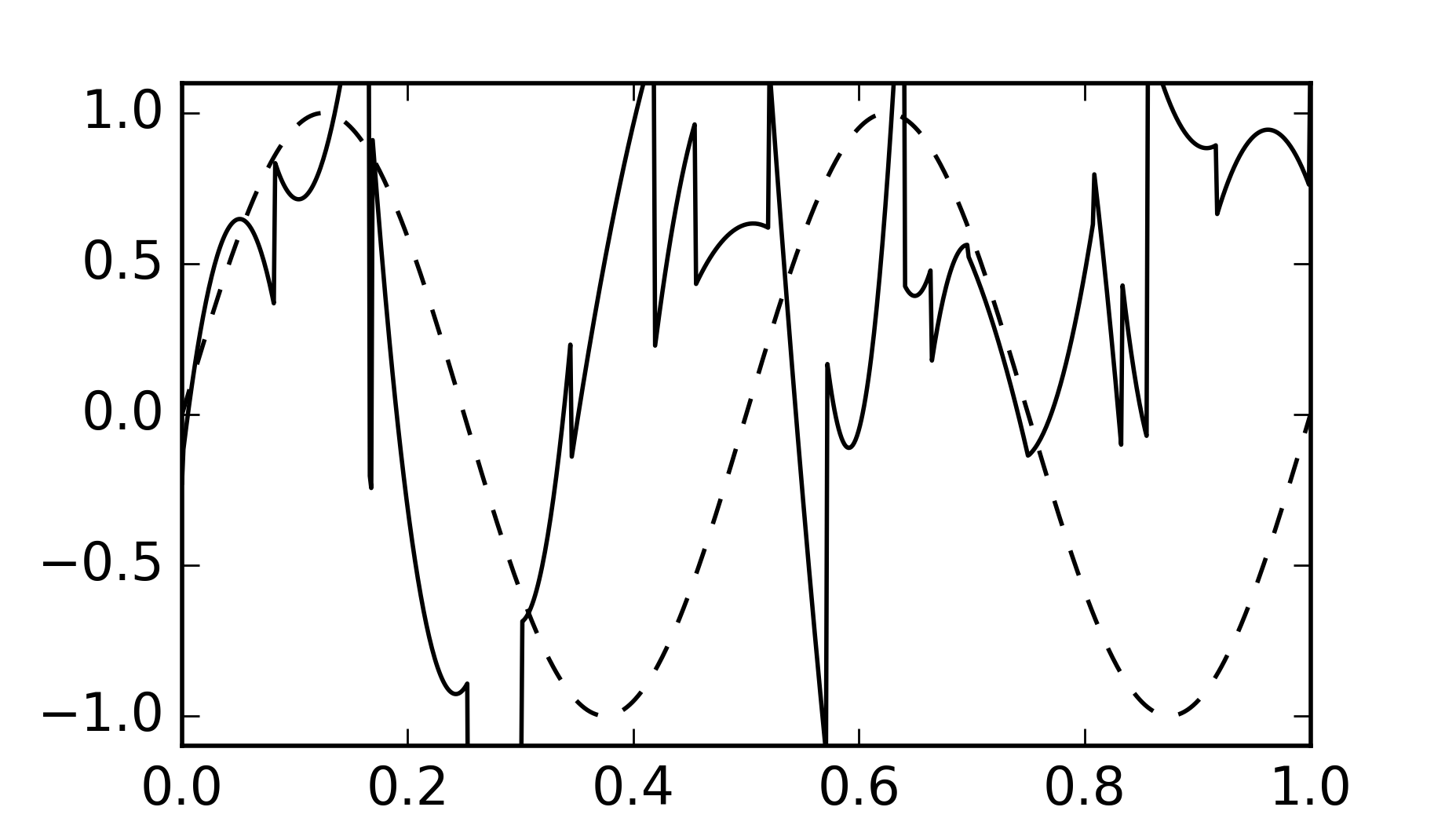}
 \includegraphics[width=.45\textwidth]{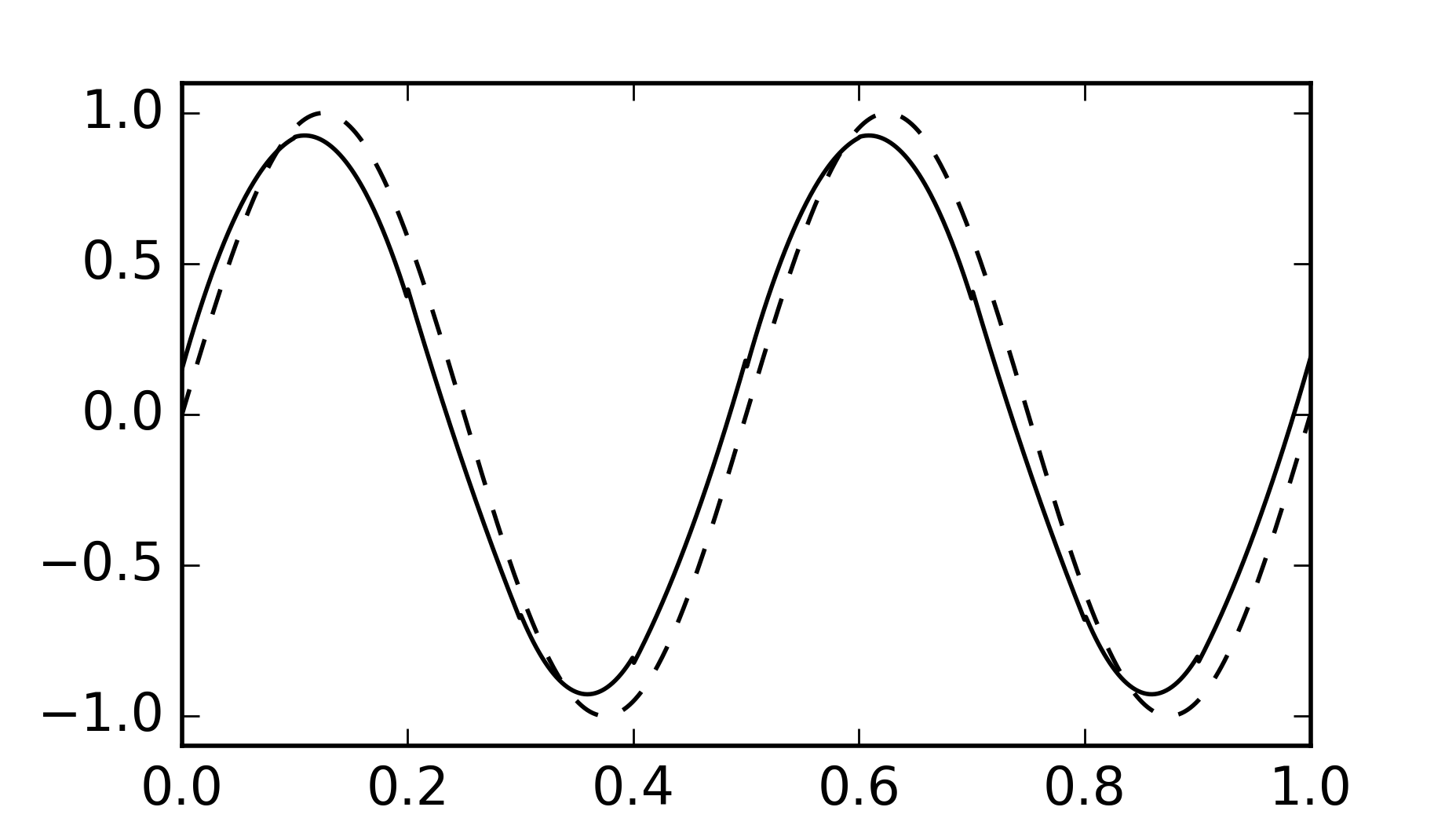}
 \includegraphics[width=.45\textwidth]{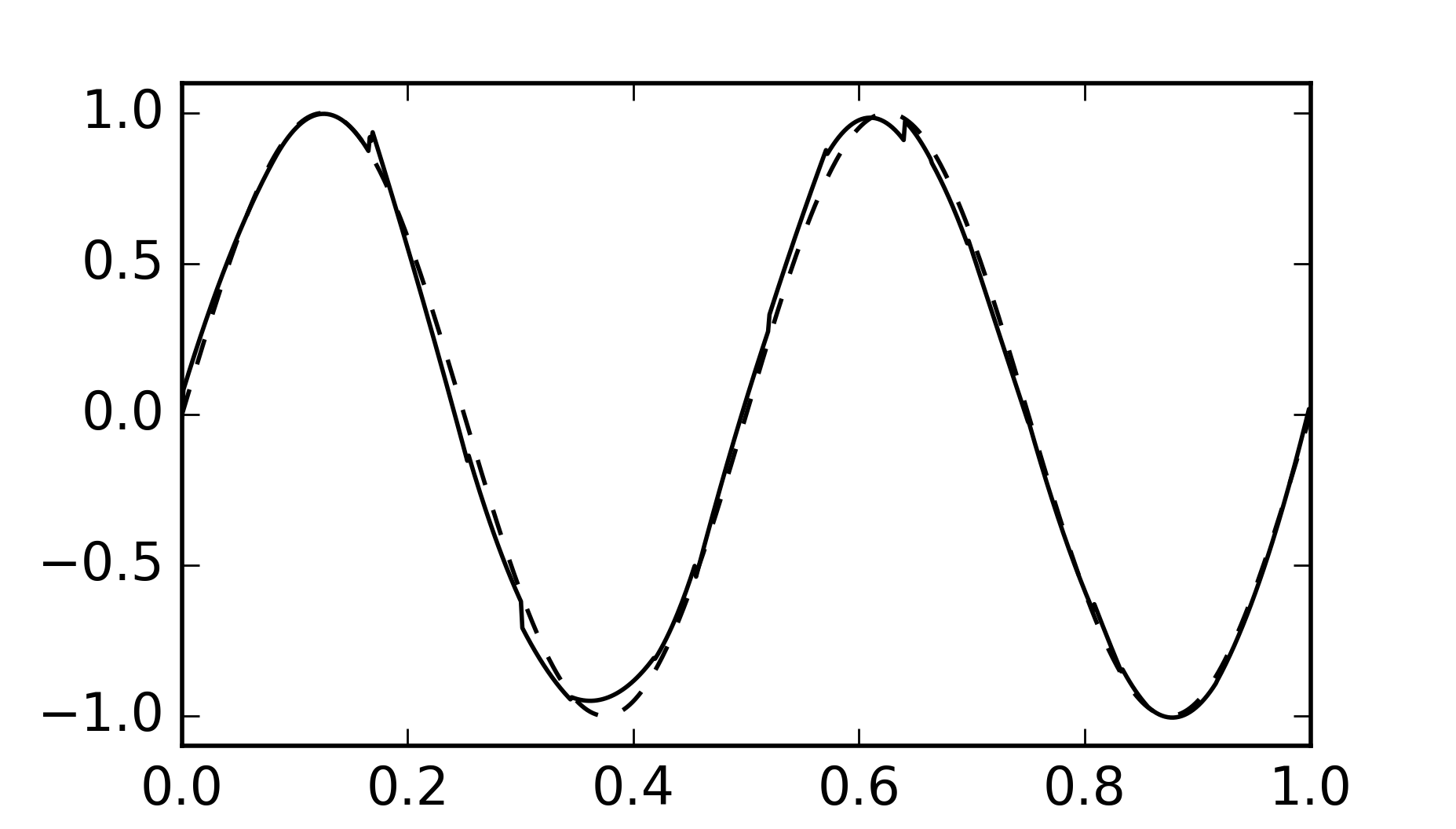}
\end{figure}

\subsection*{Example 4.12: long time simulation: advection of a Gaussian pulse}
We consider the advection equation \eqref{eq-ad2} on the unit square with 
periodic boundary condition and 
initial condition 
$u(x,y,0) = \exp(-200((x-0.5)^2+(y-.5)^2))$. This is a two dimensional analog of the test Example 4.5.
We present numerical results for the three DG methods using $Q^2/P^2$ space. 
Both uniform rectangular mesh and unstructured triangular mesh are considered. 
We use the RK3 time stepping. The CFL number is taken to be $0.05$ on the rectangular mesh, and $0.02$ on the triangular mesh.

Numerical results on the cut line $y=.5$ at time $T = 10$ 
(wave propagates 10 cycles) are shown in Figure \ref{fig:gs2d}.
Figure \ref{fig:gs2d} indicates the superior performance of the new method on both rectangular 
and triangular meshes over the 
dissipative upwinding DG method in terms of dissipation error. It is also superior over 
the central DG method in terms of both dissipation and dispersion error. 

\begin{figure}[ht!]
 \caption{Numerical solution at $T=10$ on the cut line $y = 0.5$ 
 for Example 4.12. RK3 time stepping. 
Top row: method (U). 
Middle row: method (C).
Bottom row: method (A).
Left: square mesh with $20\times 20$ cells, $Q^2$ space. 
Right: triangular mesh with meshsize $h = 0.05$, $P^2$ space.
 (Roughly 60 dofs in each direction)
 }
 \label{fig:gs2d}
 \includegraphics[width=.45\textwidth]{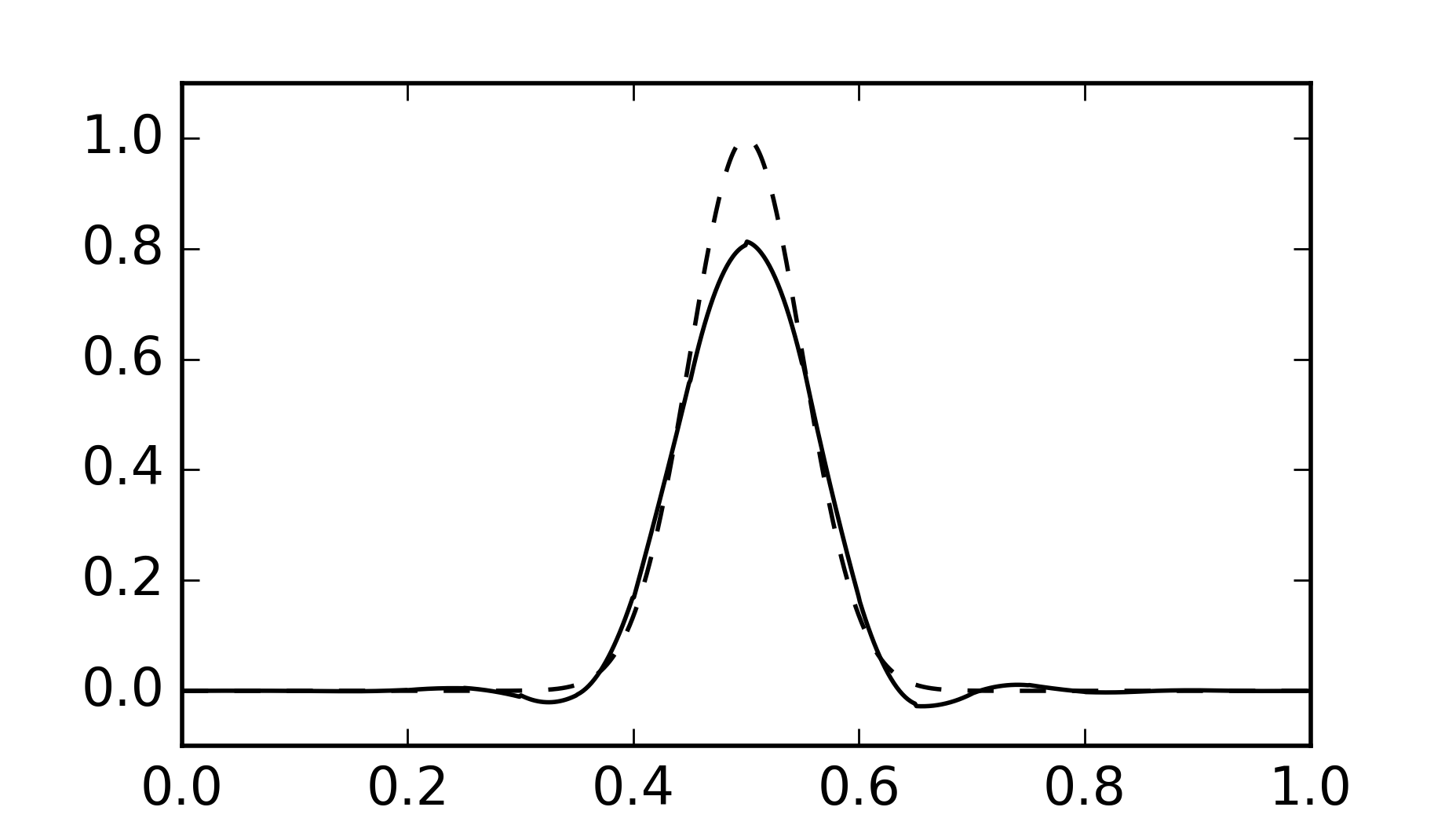}
 \includegraphics[width=.45\textwidth]{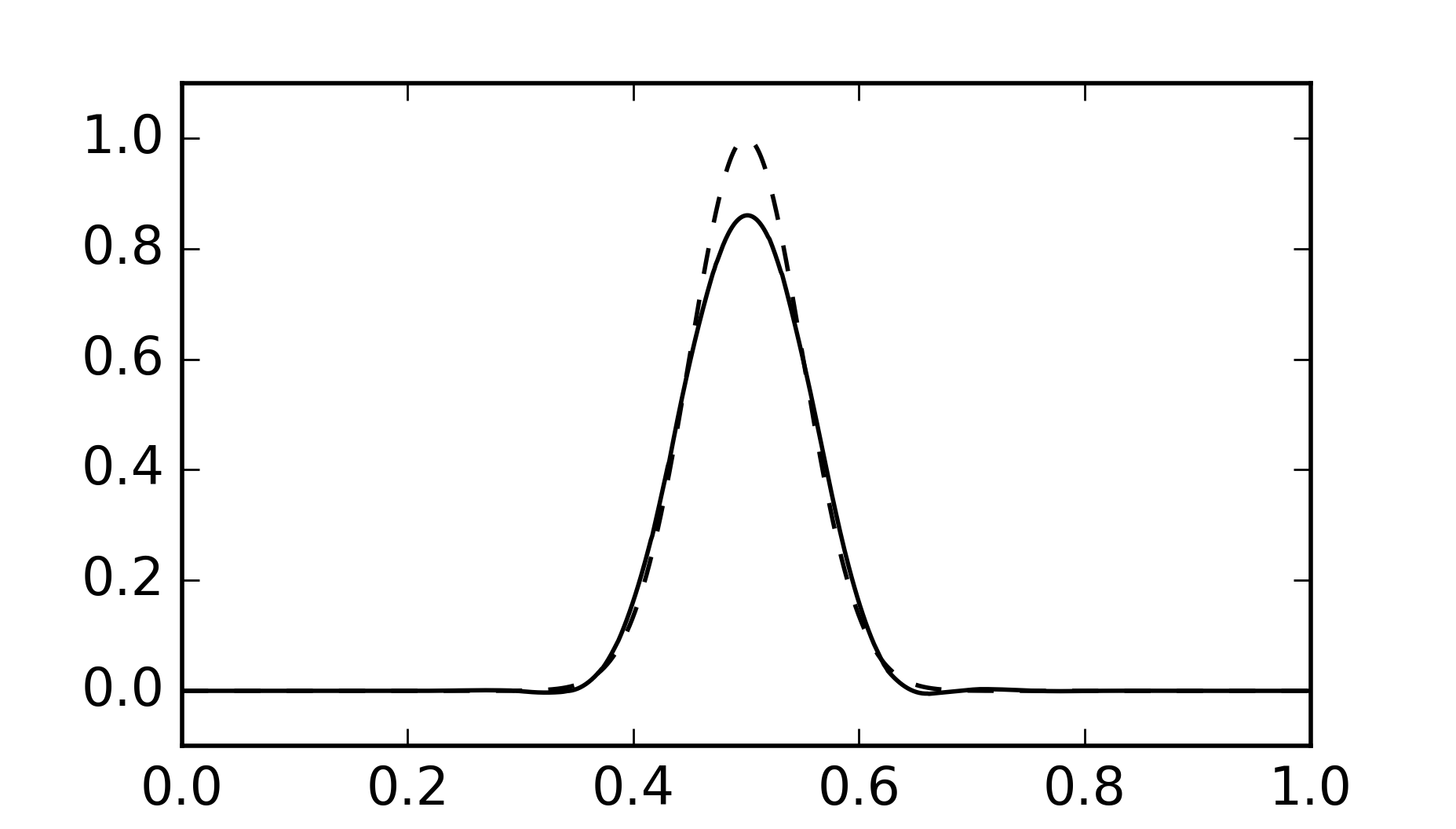}
 \includegraphics[width=.45\textwidth]{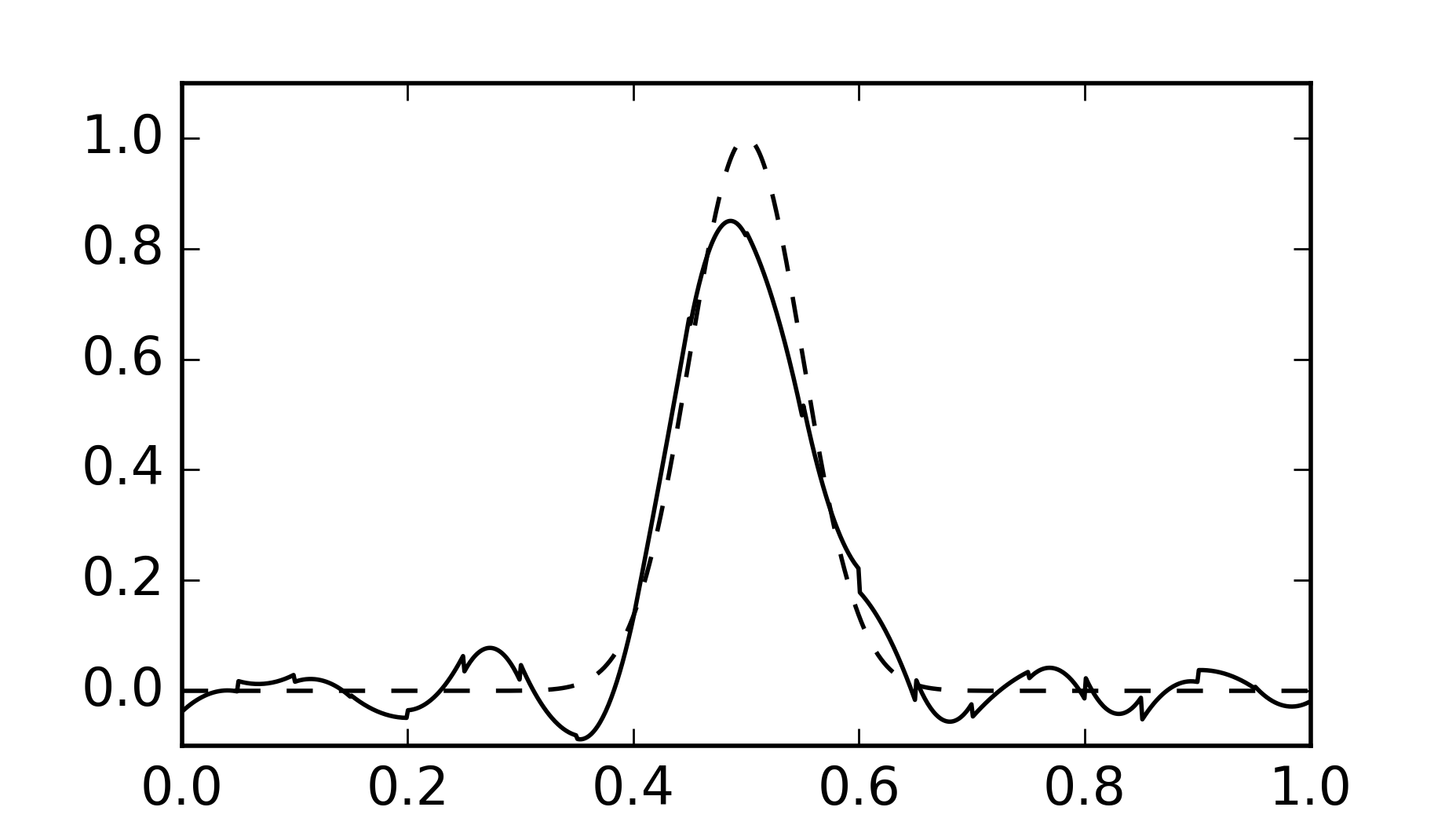}
 \includegraphics[width=.45\textwidth]{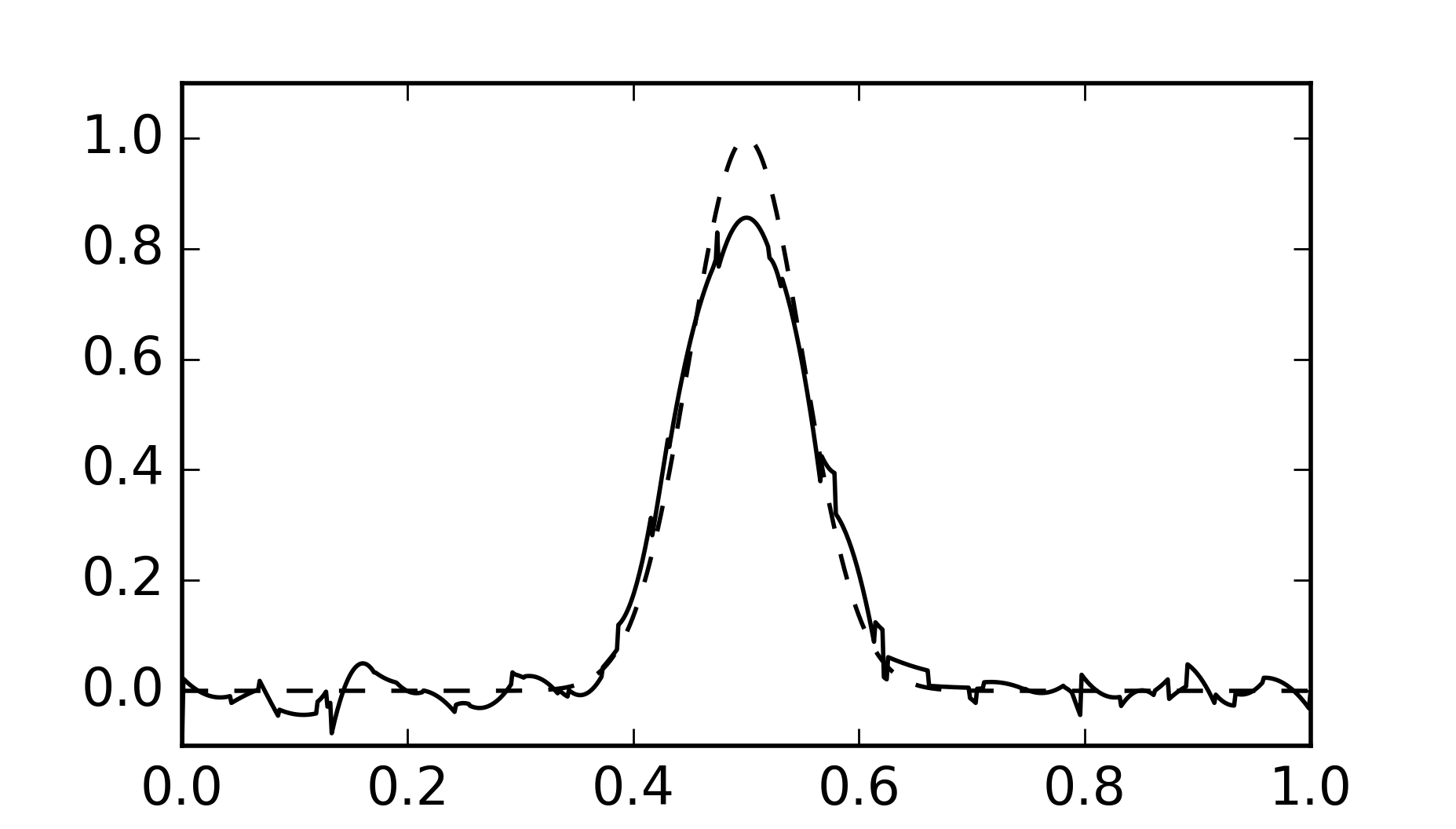}
 \includegraphics[width=.45\textwidth]{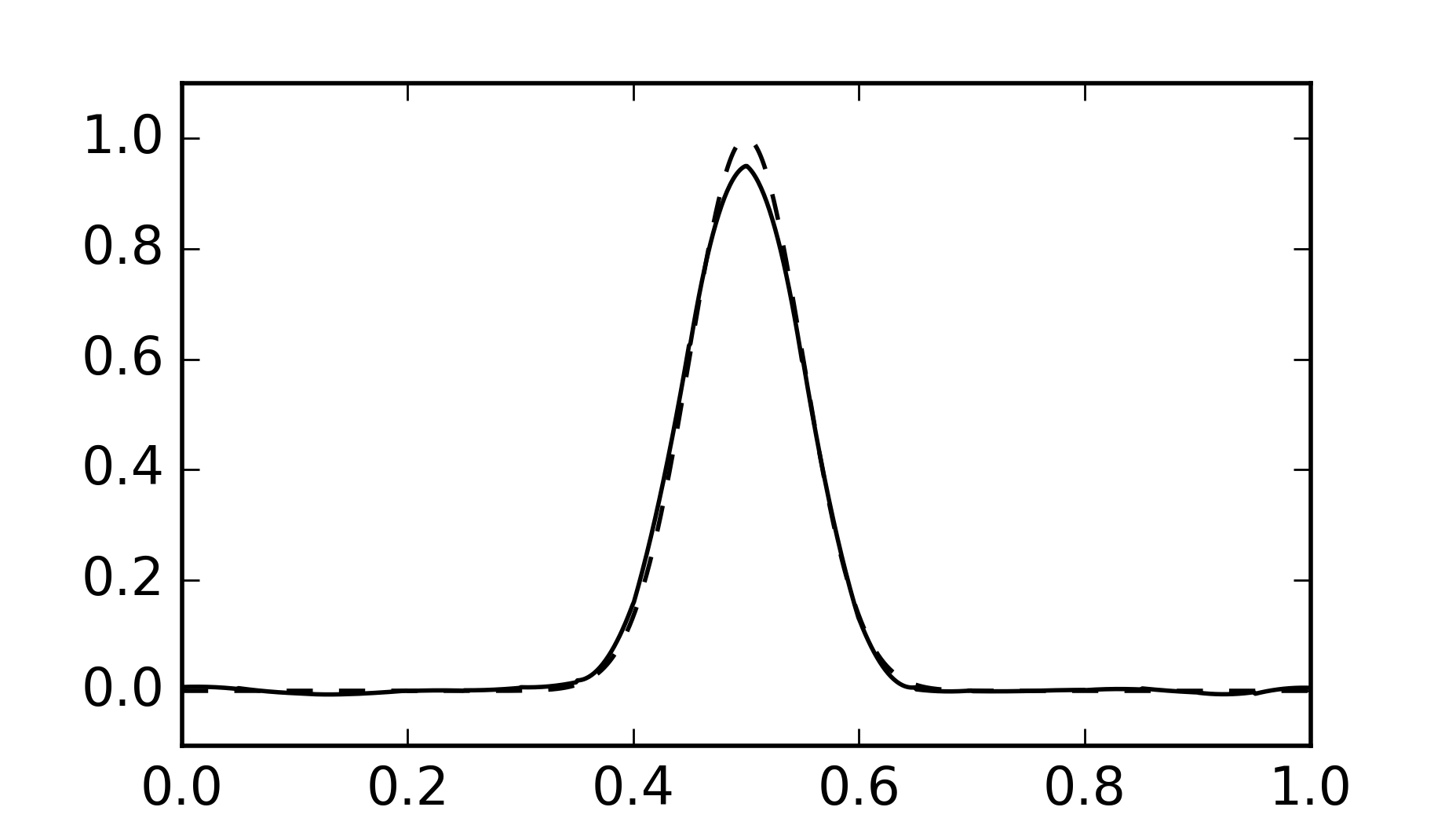}
 \includegraphics[width=.45\textwidth]{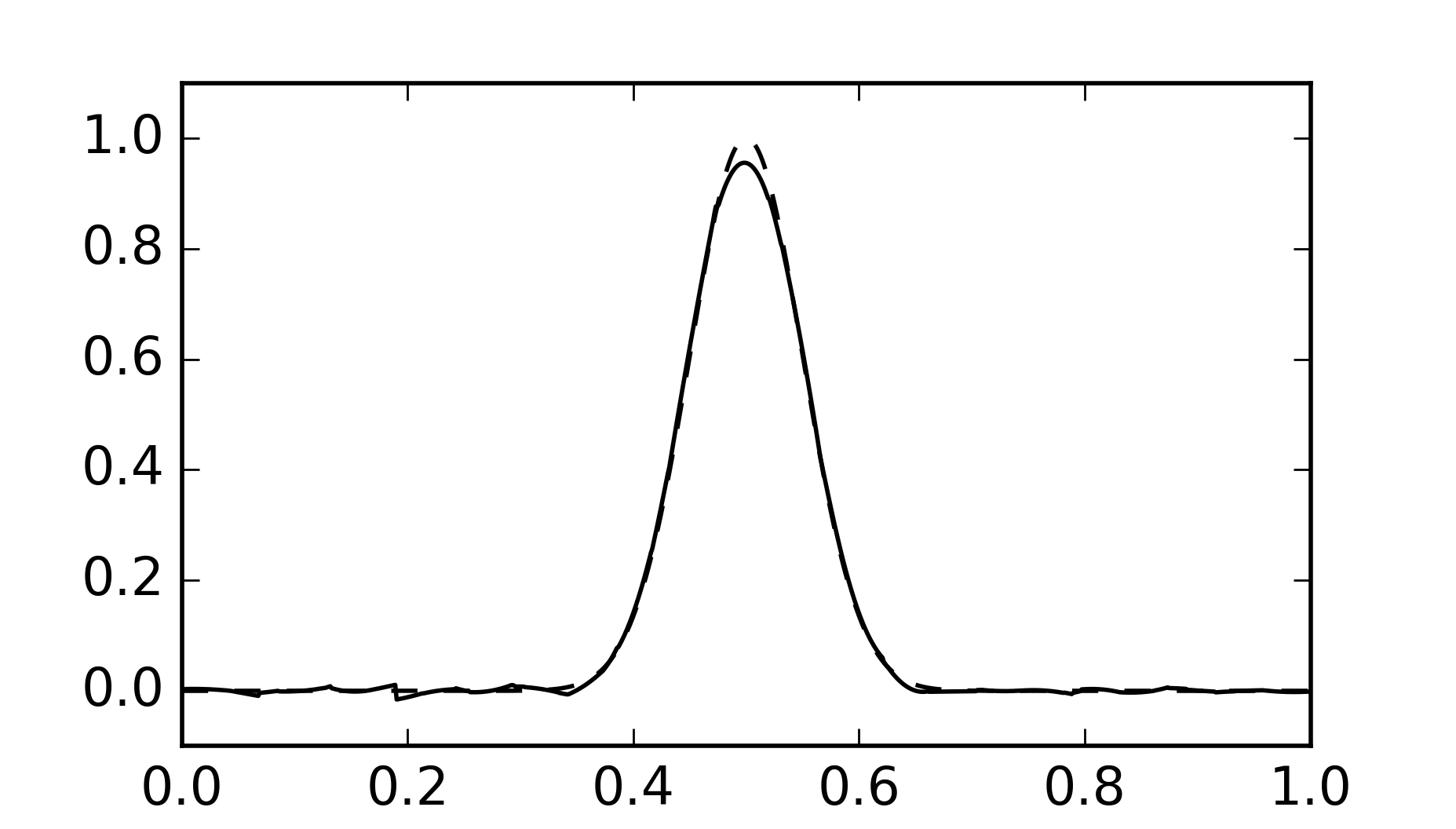}
\end{figure}

\subsection*{Example 4.13: long time simulation: 2D acoustics with time periodic source}
We consider the following acoustic equations on the whole space $\mathrm{R}^2$ with
time periodic source:
\begin{align*}
% \label{ac2d-source}
p_t + u_x + v_y = &\; S,\\
u_t + p_x = &\; 0,\\
v_t + p_y = &\; 0,
\end{align*}
where the source term
\[
 S=\exp\left[-{\mathrm{ln}(2)\left(\frac{x^2+y^2}{(0.2)^2}\right)}\right]\sin(\omega t),
 \quad\quad {\text{ with }} \omega = 4\pi.
\]
Zero initial condition is considered.
The exact solution to the above equations can be found in \cite{CAA2}.
We specifically mention that, the exact solution $p(x,y,t)$ is purely radial, with its spatial 
dependence only through the radius $r = \sqrt{x^2+y^2}$, and 
at any physical location $(x,y)$, it is $0$ (at rest) for $t<r$, and is
time-periodic with frequency $\omega=4\pi$ for $t > r$.

We shall consider the numerical solution on a stretched rectangular domain 
\[
 \Omega = [0, 12]\times [0, 1].
\]
The final time of the simulation is $T=10$.

The boundary treatment is given as follows.
By symmetry of the problem, the symmetry (wall) boundary condition is used 
along the left ($x=0$) and bottom ($y=0$) boundaries: 
\begin{align*}
\widehat{\Bn \bld u_h} =& [0, p, 0]'\quad\quad \text{ on left boundary $x=0$},\\ 
\widehat{\Bn \bld u_h} =& [0, 0, p]'\quad\quad \text{ on bottom boundary $y=0$}. 
\end{align*}
At time $T=10$, the solution is still at rest on the right boundary ($x=12$), and a simple 
outflow boundary condition is imposed there.
To treat the top boundary ($y=1$), 
we impose a perfectly matched layer (PML) \cite{pmlDG} with thickness $0.5$,
\[
 \Omega_{pml} = [0, 12]\times [1, 1.5].
\] 
We solve the following PML-ODE system from \cite{pmlDG} on the PML domain $\Omega_{pml}$:
\begin{align*}
 p_t + u_x + v_y = &\; -\sigma p,\\
u_t + p_x = &\; \sigma(u+\tilde u),\\
v_t + p_y = &\; -\sigma v,\\
\tilde u_t =&\;- \sigma(u+\tilde u),
\end{align*}
with the absorption constant $\sigma$ taken to be $\sigma =10$.

We present numerical results for the three DG methods with $P^2$ space on, 
(U) for upwinding flux, 
(C) for central flux, and 
(A) for the alternating flux.
The RK3 time stepping is used, and CFL number is taken to be $0.05$.
We use a triangular mesh with mesh size $h=0.2$, see Figure \ref{fig:pmlX}.
\begin{figure}[ht!]
\caption{Computational mesh for Example 4.13.The PML region $\Omega_{pml}$ is colored in red, and 
the domain $\Omega$ is colored in green.}
\label{fig:pmlX}
\includegraphics[width=.8\textwidth]{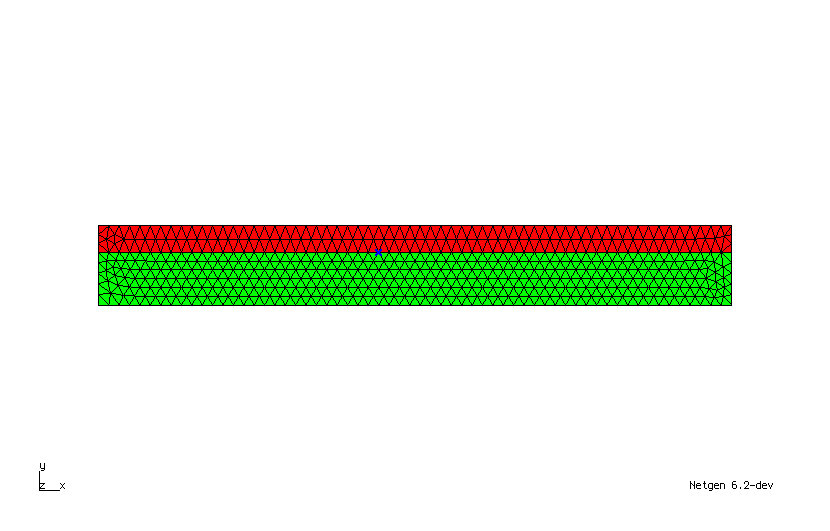}
\end{figure}

Numerical results
for the pressure field 
$p_h$ on the segment $5\le x\le 9$ along the x-axis are shown in Figure \ref{fig:pml}.
The method (U) produce visible dissipation error, while the method (C) produce slight phase shift.
The method (A) is better than (U) in terms of dissipation error, and better than (C) in terms of phase shift.

\begin{figure}[ht!]
 \caption{Pressure field at $T=10$ on the segment  $\{(x,0): 5\le x\le 9\}$ 
 for Example 4.14. RK3 time stepping. 
Top: method (U). 
Middle: method (C).
Bottom: method (A).
Solid line: numerical solution. Dashed line: exact solution.
 }
 \label{fig:pml}
 \includegraphics[width=.8\textwidth]{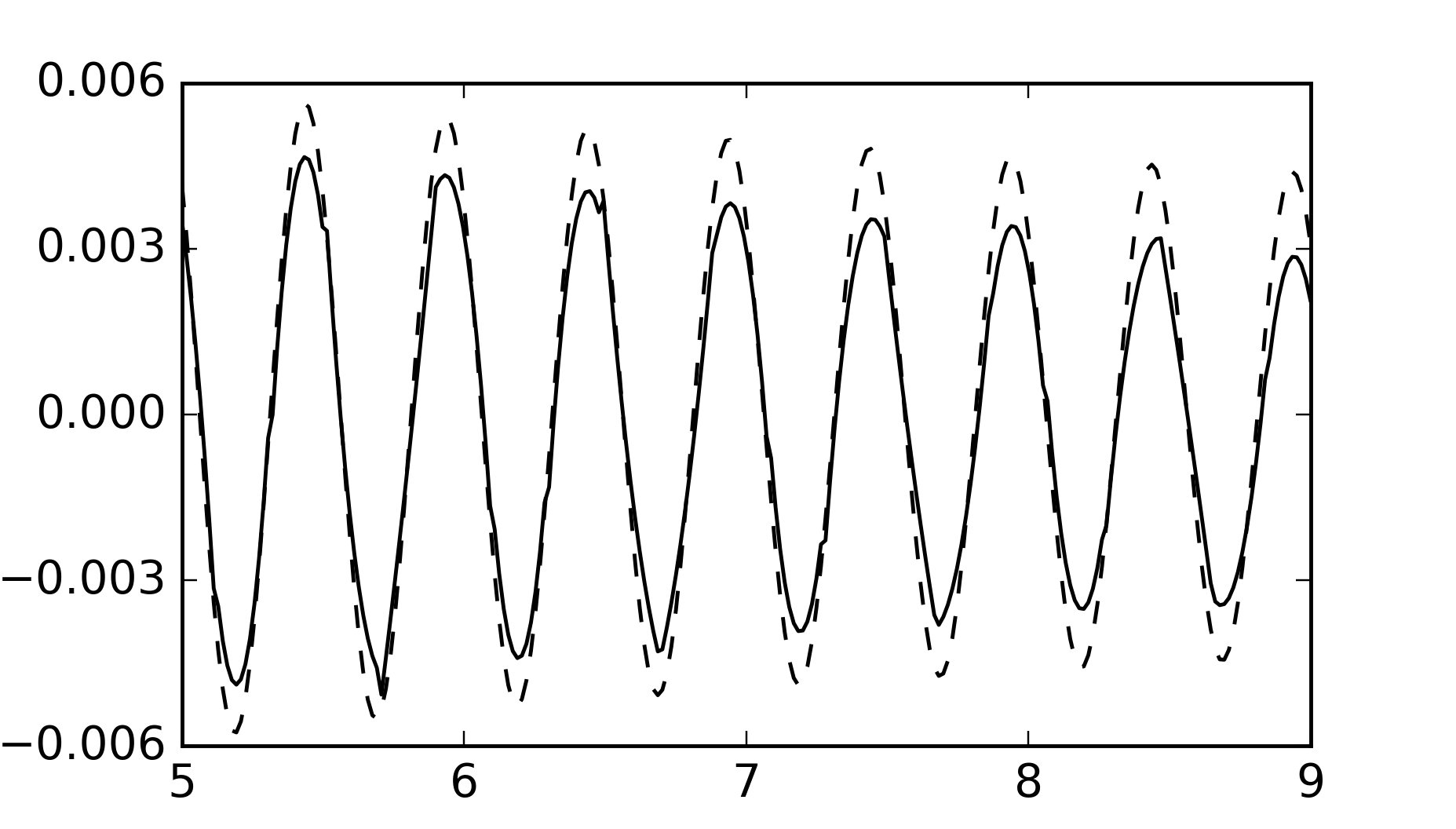}
 \includegraphics[width=.8\textwidth]{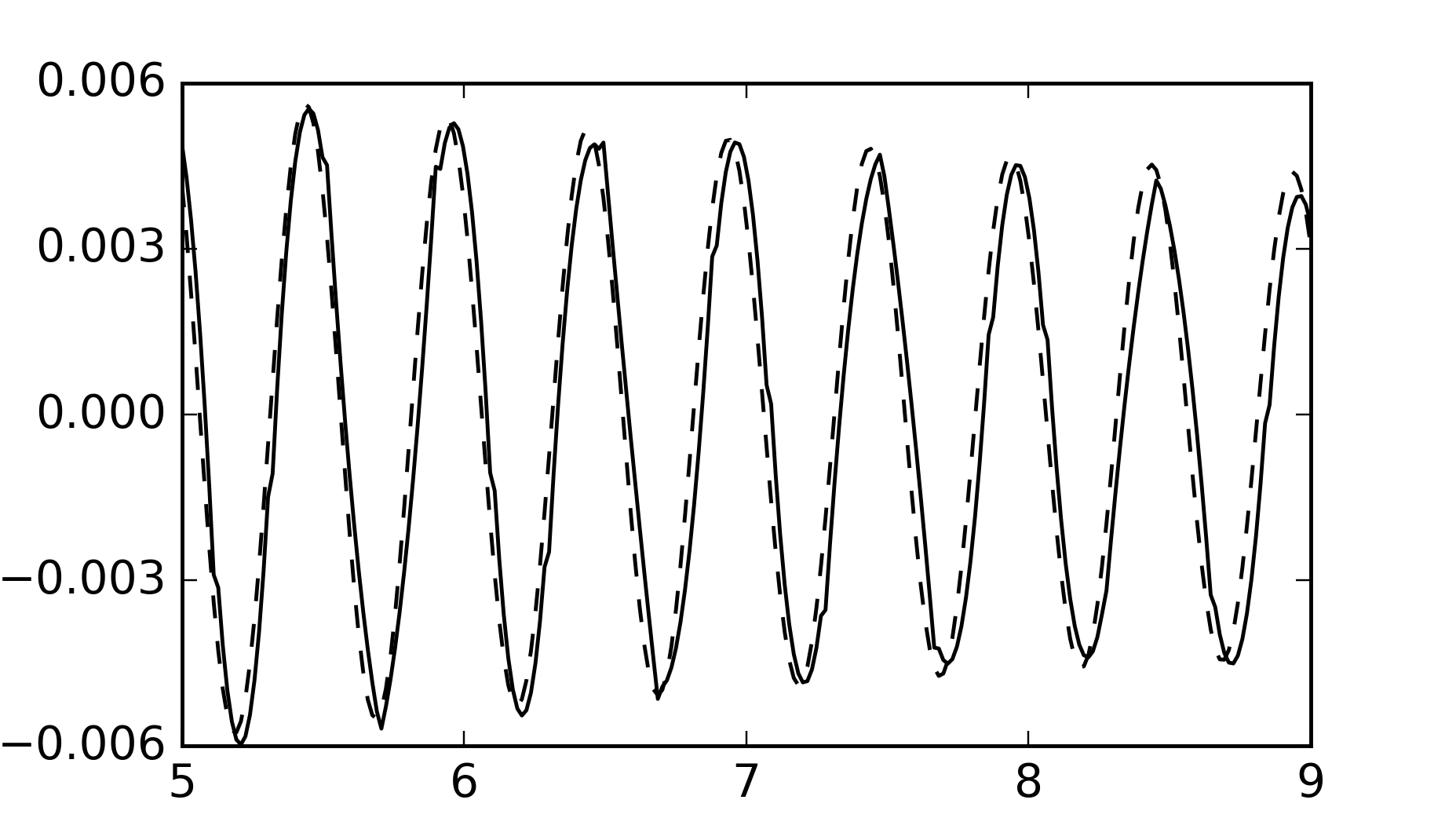}
 \includegraphics[width=.8\textwidth]{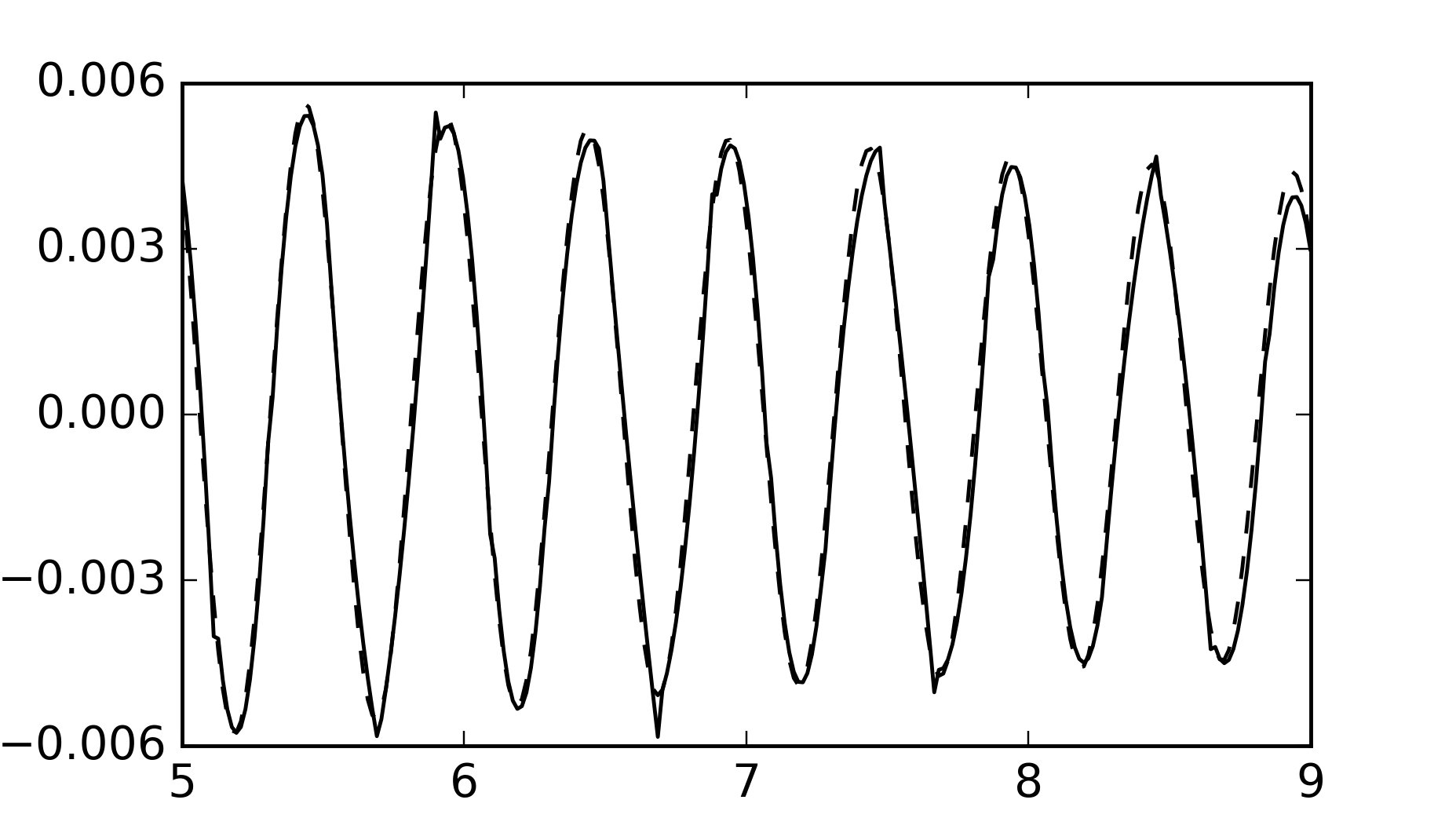}
\end{figure}

\section{Concluding remarks}
\label{sec:conclude}
In this paper, we have proposed an energy conserving DG method for linear 
symmetric hyperbolic systems.
The method is proven to be optimal convergent in one-space dimension, and 
in multi-space dimension on rectangular meshes. 

Extensive numerical results are presented to assess the proposed method.
In particular, we observe the optimal $L^2$-convergence of the method in one-space dimension, 
and in two-space dimension using rectangular meshes.
We also observe the optimal convergence of the method (with the doubling unknowns approach)
on triangular meshes for all the tests considered in this paper.
% for the scalar advection equation, 
% and the acoustic equations with a subsonic background mean flow velocity, these optimal convergence are,
% however, not covered by our current analysis.
Numerical comparison of the new method with the DG methods using upwinding numerical fluxes, and
central numerical fluxes  for long time simulations are also presented. 
The new method is found to be better than the upwinding DG method in terms of the dissipation error, and 
to be better than the central DG method in terms of the dispersion error for all the numerical tests conducted 
in this paper.

\bibliographystyle{siam}

% \bibliography{../../../BIB/all}

\end{document}